\newtheorem{theorem}{Theorem}[section]
\newtheorem{remm}{Remark}[section]
\newtheorem{lemma}{Lemma}[section]
\newtheorem{remark}[remm]{Remark}
\newtheorem{proposition}[theorem]{Proposition}
\numberwithin{equation}{section}
\newcommand{\aspro}{\vbox{\hrule width 6 pt
           \hbox{\vrule height 6 pt \hskip 5.2 pt \vrule height 6 pt}
           \hrule width 6 pt}}
\newcommand{\mybox}{\aspro}
\newcommand{\rset}{{\mathbb R}}
\newcommand{\nset}{{\mathbb N}}
\newcommand{\bfdot}{\bf\dot}
\newcommand{\ken}{\ \ }
\newcommand{\ssy}{\scriptscriptstyle}
\newcommand{\half}{\frac{1}{2}}
\begin{document}
\title[]
{Finite Element Approximations\\
for a linear Cahn-Hilliard-Cook equation\\
driven by the space derivative of a space-time white noise}
%
%
%
%
%
\author[]
{Georgios T. Kossioris$^{\ddag}$ and Georgios E.
Zouraris$^{\ddag}$}
\thanks{%
$^{\ddag}$Department of Mathematics, University of Crete, GR--714
09 Heraklion, Crete, Greece.}
%
%
%
%
\subjclass{65M60, 65M15, 65C20}
\keywords{finite element method, space derivative of space-time
white noise, Backward Euler time-stepping, fully-discrete
approximations, a priori error estimates}
%
%
%
\maketitle
%
%
%
%
%
\begin{abstract}
We consider an initial- and Dirichlet boundary- value problem for
a linear Cahn-Hilliard-Cook equation, in one space dimension,
forced by the space derivative of a space-time white noise.
First, we propose an approximate regularized stochastic parabolic
problem discretizing the noise using linear splines. Then
fully-discrete approximations to the solution of the
regularized problem are constructed using, for the discretization
in space, a Galerkin finite element method based on
$H^2-$piecewise polynomials, and, for time-stepping, the Backward
Euler method.
Finally, we derive strong a priori estimates for the modeling error and
for the numerical approximation error to the solution of the regularized problem.
\end{abstract}
%
%
%
\section{Introduction}\label{SECT1}
%
%
%
%
%
Let $T>0$, $D=(0,1)$ and $(\Omega,{\mathcal F},P)$ be a complete
probability space. Then we consider the following model initial-
and Dirichlet boundary- value problem for a linear
Cahn-Hilliard-Cook equation: find a stochastic function
$u:[0,T]\times{\overline D}\to\rset$ such that
\begin{equation}\label{PARAP}
\begin{gathered}
\partial_tu +\partial_x^4u+\mu\,\partial_x^2u
=\partial_x{\dot W}(t,x)
\quad\forall\,(t,x)\in (0,T]\times D,\\
\partial_x^{2 m}u(t,\cdot)\big|_{\ssy\partial D}=0
\quad\forall\,t\in(0,T],
\ken m=0,1,\\
u(0,x)=0\quad\forall\,x\in D,\\
\end{gathered}
\end{equation}
a.s. in $\Omega$, where ${\dot W}$ denotes a space-time white
noise on $[0,T]\times D$ (see, e.g., \cite{Walsh86},
\cite{KXiong}) and $\mu$ is a real constant for which there
exists $\kappa\in{\mathbb N}$ such that
\begin{equation}\label{mu_condition}
(\kappa-1)^2\,\pi^2\leq\mu<\kappa^2\,\pi^2,
\end{equation}
where ${\mathbb N}$ is the set of all positive integers.  
The above stochastic partial differential equation  combines two
independent characteristics. On the one hand it corresponds to the
linearization of the Cahn-Hilliard-Cook equation  around a
homogeneous initial state, in the spinodal region, that governs
the dynamics of spinodal decomposition in metal alloys; see e.g.
\cite{BlomMPW}, and references therein. On the other hand the
forcing noise  is a derivative of a space-time white noise that
physically arises in generalized Cahn-Hilliard equations, which
are equations of conservative type describing the evolution of an
order parameter in phase transitions (see \cite{Hohen}; cf.
\cite{KV03}, \cite{AKS}, \cite{Rogers1988}).
\par
The mild solution of the problem above (cf. \cite{DZ2007}) is
given by the formula
\begin{equation}\label{MildSol}
u(t,x)=\int_0^t\!\int_{\ssy D}\Psi(t-s;x,y)\,dW(s,y),
\end{equation}
where
\begin{equation}\label{GreenKernel}
\Psi(t;x,y)=-\sum_{k=1}^{\infty}e^{-\lambda_k^2\,(\lambda_k^2-\mu)
t} \,\varepsilon_k(x)\,\varepsilon_k'(y) \quad
\forall\,(t,x,y)\in(0,T]\times{\overline D}\times{\overline D},
\end{equation}
with $\lambda_k:=k\,\pi$ for $k\in\nset$, and
$\varepsilon_k(z):=\sqrt{2}\,\sin(\lambda_k\,z)$ for
$z\in{\overline D}$ and $k\in\nset$.
Observe that $\Psi(t;x,y)=-\partial_yG(t;x,y)$, where
$G(t;x,y)=\sum_{k=1}^{\infty}e^{-\lambda_k^2\,(\lambda_k^2-\mu) t}
\,\varepsilon_k(x)\,\varepsilon_k(y)$ for all
$(t,x,y)\in(0,T]\times{\overline D}\times{\overline D}$,
is the space-time Green kernel of the corresponding deterministic
parabolic problem: find a deterministic function
$w:[0,T]\times{\overline D}\to\rset$ such that
\begin{equation}\label{Det_Parab}
\begin{gathered}
\partial_tw +\partial_x^4w+\mu\,\partial_x^2w= 0
\quad\forall\,(t,x)\in (0,T]\times D,\\
\partial_x^{2m}w(t,\cdot)\big|_{\ssy\partial D}=0
\quad\forall\,t\in(0,T],\ken m=0,1,
\\
w(0,x)=w_0(x)\quad\forall\,x\in D.\\
\end{gathered}
\end{equation}
\par
The goal of the paper at hand is to propose and analyze a
methodology of constructing finite element approximations to $u$.
\subsection{The regularized problem}\label{The_Reg_Problem}
Our first step is to construct below an approximate to
\eqref{PARAP} regularized problem getting inspiration from the
work \cite{ANZ} for the stochastic heat equation with additive
space-time white noise (cf. \cite{KZ2010}, \cite{KZ2009}).
\par
Let $N_{\star}\in{\mathbb N}$, $\Delta{t}:=\frac{T}{N_{\star}}$,
$J_{\star}\in\nset$ and $\Delta{x}:=\frac{1}{J_{\star}}$.
Then, consider a partition of the interval $[0,T]$ with nodes
$(t_n)_{n=0}^{\ssy N_{\star}}$ and a partition of ${\overline D}$
with nodes $(x_j)_{j=0}^{\ssy J_{\star}}$, given by
$t_n:=n\,\Delta{t}$ for $n=0,\dots,N_{\star}$ and
$x_j:=j\,\Delta{x}$ for $j=0,\dots,J_{\star}$. Also, set
$T_n:=(t_{n-1},t_n)$ for $n=1,\dots,N_{\star}$, and
$D_j:=(x_{j-1},x_j)$ for $j=1,\dots,J_{\star}$.
\par
First, we let ${\mathcal S}_{\star}$ be the space of functions which
are continuous on ${\overline D}$ and piecewise linear over the
above specified partition of $D$, i.e.,
\begin{equation*}
{\mathcal S}_{\star}:=\left\{\,s\in C({\overline D};{\mathbb R}):
\quad s\big|_{\ssy D_j}\in{\mathbb P}^1(D_j)\ \ \text{\rm for}\ \
j=1,\dots,J_{\star}\,\right\}\subset H^1(D).
\end{equation*}
It is well-known that ${\rm dim}({\mathcal S}_{\star})=J_{\star}+1$
and that the functions $(\psi_i)_{i=1}^{\ssy J_{\star}+1}\subset
{\mathcal S}_{\star}$ defined by:
\begin{gather*}
\psi_1(x):=\tfrac{1}{\Delta{x}}\,(x_1-x)^{+},\quad \psi_{\ssy
J_{\star}+1}(x):=\tfrac{1}{\Delta{x}}
\,(x-x_{\ssy J_{\star}-1})^{+},\\
\psi_i(x):=\tfrac{1}{\Delta{x}}\,\left[\,(x-x_{i-2})\,{\mathcal
X}_{\ssy (x_{i-2},x_{i-1}]}+(x-x_{i})\,{\mathcal X}_{\ssy
(x_{i-1},x_{i}]}\,\right],\quad i=2,\dots,J_{\star},
\end{gather*}
consist the well-known {\it hat functions} basis of ${\mathcal S}_{\star}$, where, for any
$A\subset{\mathbb R}$,  by ${\mathcal X}_{\ssy A}$ we denote the
index function of $A$.
Next, consider the fourth-order linear stochastic parabolic
problem:
\begin{equation}\label{AC2}
\begin{gathered}
\partial_t{\widehat u} +\partial_x^4{\widehat u}
+\mu\,\partial_x^2{\widehat u}=\partial_x{\widehat W}
\quad\text{\rm in}\ken (0,T]\times D,\\
\partial_x^{2m}{\widehat u}(t,\cdot)\big|_{\ssy\partial D}=0
\quad\forall\,t\in(0,T],\ken m=0,1,\\
{\widehat u}(0,x)=0\quad\forall\,x\in D,\\
\end{gathered}
\end{equation}
\par\noindent
a.e. in $\Omega$, where:
\begin{equation*}\label{WNEQ1}
{\widehat W}(t,x) :=\tfrac{1}{\Delta{t}} \,\sum_{n=1}^{\ssy
N_{\star}}{\mathcal X}_{\ssy T_n}(t)\,\left[\,\sum_{\ell=1}^{\ssy
J_{\star}+1} \left(\,\sum_{m=1}^{\ssy J_{\star}+1}
G^{-1}_{\ell,m}\,R_{n,m}
\,\right)\,\psi_{\ell}(x)\,\right],\quad\forall \,(t,x)\in
[0,T]\times{\overline D},
\end{equation*}
$G$ is a real,  $(J_{\star}+1)\times(J_{\star}+1)$, symmetric and
positive definite matrix with
\begin{equation*}
G_{i,j}:=(\psi_j,\psi_i)_{\ssy 0,D},\quad
i,\,j=1,\dots,J_{\star}+1,
\end{equation*}
and
\begin{equation*}
R_{n,i}:=\int_{\ssy T_n}\!\int_{\ssy D}\psi_i(x)\;dW(t,x), \quad
i=1,\dots,J_{\star}+1,\quad n=1,\dots,N_{\star}.
\end{equation*}
%
%
The solution of the problem \eqref{AC2}, has the integral
representation (see, e.g., \cite{LMag})
\begin{equation}\label{HatUform}
\begin{split}
\widehat{u}(x,t)=&\,\int_0^t\!\!\!\int_{\ssy D}G(t-s;x,y)
\,\partial_y{\widehat W}(s,y)\,dsdy\\
=&\,\int_0^t\!\!\!\int_{\ssy D}\Psi(t-s;x,y) \,{\widehat
W}(s,y)\,dsdy, \quad\forall\,(t,x)\in[0,T]\times{\overline D}.\\
\end{split}
\end{equation}
%
%
%
%
\begin{remark}
A simple computation verifies that $G$ is a tridiagonal matrix
with $G_{1,1}=G_{\ssy
J_{\star}+1,J_{\star}+1}=\frac{\Delta{x}}{3}$,
$G_{i,i}=\frac{2\,\Delta{x}}{3}$ for $i=2,\dots,J_{\star}$, and
$G_{i,i+1}=\frac{\Delta{x}}{6}$ for $i=1,\dots,J_{\star}$. Since
$G$ is symmetric we have in addition that
$G_{i-1,i}=\frac{\Delta{x}}{6}$ for $i=2,\dots,J_{\star}+1$.
\end{remark}
%
%
\begin{remark}
Let ${\mathcal I}=\{(n,i):\,n=1,\dots,N_{\star},
\,i=1,\dots,J_{\star}+1\}$. Using the properties of the stochastic
integral (see, e.g., \cite{Walsh86}), we conclude that
$R_{n,i}\sim{\mathcal N}(0,\Delta{t}\,G_{i,i})$ for all
$(n,i)\in{\mathcal I}$. Also, we observe that ${\mathbb
E}[R_{n,i}\,R_{n',j}]=0$ for $(n,i)$, $(n',j)\in{\mathcal I}$ with
$n\not=n'$, and hence they are independent since they are
Gaussian. In addition, we have that ${\mathbb
E}[R_{n,i}\,R_{n,j}]=\Delta{t}\,G_{i,j}$ for $(n,i)$,
$(n,j)\in{\mathcal I}$.
Thus, for a given $n$ the random variables $(R_{n,i})_{i=1}^{\ssy
J_{\star}+1}$ are Gaussian and correlated, with correlation matrix
$\Delta{t}\,G$.
\end{remark}
%
%
%
%
%
%
\subsection{The numerical method}\label{The_Numerical_Method}
Our second step is to construct finite element approximations of the
solution ${\widehat u}$ to the regularized problem.
\par
Let $M\in{\mathbb N}$, $\Delta\tau:=\frac{T}{M}$,
$\tau_m:=m\,\Delta\tau$ for $m=0,\dots,M$, and
$\Delta_m:=(\tau_{m-1},\tau_m)$ for $m=1,\dots,M$.
Also, let $r\in\{2,3\}$,  and $M_h^r\subset H^2(D)\cap H_0^1(D)$ be
a finite element space consisting of functions which are piecewise
polynomials of degree at most $r$ over a partition of $D$ in
intervals with maximum mesh-length $h$.
Then, computable fully-discrete approximations of ${\widehat u}$
are constructed by using the Backward Euler finite element method,
which first sets
\begin{equation}\label{FullDE1}
{\widehat U}_h^0:=0
\end{equation}
and then, for $m=1,\dots,M$, finds ${\widehat U}_h^m\in M_h^r$
such that
\begin{equation}\label{FullDE2}
(\,{\widehat U}_h^m-{\widehat U}_h^{m-1},\chi\,)_{\ssy 0,D}
+\Delta\tau\,\left[\,(\,({\widehat U}_h^m)'',\chi''\,)_{\ssy 0,D}
+\mu\,(\,({\widehat U}_h^m)'',\chi\,)_{\ssy
0,D}\,\right]=\int_{\ssy\Delta_m}(\,\partial_x{\widehat
W},\chi\,)_{\ssy 0,D}\,d\tau
\end{equation}
for all $\chi\in M_h^r$, where $(\cdot,\cdot)_{\ssy 0,D}$ is the
usual $L^2(D)-$inner product.
%
%
%
%
\subsection{An overview of the paper and related references}
Our analysis first focus on the estimation of the modeling error,
i.e. the difference $u-{\widehat u}$, in terms of the
discretization parameters $\Delta{t}$ and $\Delta{x}$. Indeed,
working with the integral representation of $u$ and ${\widehat
u}$, we obtain (see Theorem~\ref{BIG_Qewrhma1})
\begin{equation}\label{Model_Error}
\max_{t\in[0,T]}\left\{\,\int_{\ssy\Omega}\left(\int_{\ssy
D}|u(t,x)-{\widehat
u}(t,x)|^2\;dx\right)\,dP\,\right\}^{\frac{1}{2}}\leq\,C_{\rm me}\,\big(
\,\epsilon^{-\frac{1}{2}}\,\Delta{x}^{\frac{1}{2}-\epsilon}
+{\Delta t}^{\frac{1}{8}}\,\big),
\quad\forall\,\epsilon\in(0,\tfrac{1}{2}],
\end{equation}
where $C_{\rm me}$ is a positive constant that is independent of $\Delta{x}$,
$\Delta{t}$ and $\epsilon$.
Next target in our analysis, is to provide the fully discrete
approximations of ${\widehat u}$ defined in
Section~\ref{The_Numerical_Method} with a convergence result,
which is achieved by proving the following strong error estimate
(see Theorem~\ref{FFQEWR})
\begin{equation}\label{FDEstim4}
\max_{0\leq{m}\leq\ssy M}\left\{\int_{\ssy\Omega}\left( \int_{\ssy
D}\big|{\widehat U}_h^m(x)-{\widehat
u}(\tau_m,x)\big|^2\;dx\right)dP\right\}^{\frac{1}{2}}\leq\,C_{\rm ne}\,
\left(\,\,\epsilon_1^{-\frac{1}{2}}\,\Delta\tau^{\frac{1}{8}-\epsilon_1}
+\epsilon_2^{-\frac{1}{2}}\,h^{\nu(r)-\epsilon_2}\,\right),
\end{equation}
for all $\epsilon_1\in(0,\frac{1}{8}]$ and
$\epsilon_2\in(0,\nu(r)]$ with $\nu(2)=\tfrac{1}{3}$ and
$\nu(3)=\tfrac{1}{2}$, where $C_{\rm ne}$ is a positive constant
independent of $\epsilon_1$, $\epsilon_2$, $\Delta\tau$, $h$, $\Delta{x}$
and $\Delta{t}$.
To get the error estimate \eqref{FDEstim4} we use as an auxilliary
tool the Backward-Euler time-discrete approximations of ${\widehat
u}$ which are defined in Section~\ref{SECTION3}. Thus, we can see
the numerical approximation error as a sum of two types of error:
the {\it time-discretization} error and the {\it
space-discretization} error. The {\it time-discretization} error
is the approximation error of the Backward Euler time-discrete
approximations which is estimated in
Theorem~\ref{TimeDiscreteErr1}, while the {\it
space-discretization} error is the error of approximating the
Backward Euler time-discrete approximations by the Backward Euler
finite element approximations, which is estimated in
Proposition~\ref{Tigrakis}.
\par
Let us expose some related bibliography. The work \cite{Printems}
contains a general convergence analysis for a class of
time-discrete approximations to the solution of stochastic
parabolic problems, the assumptions of which may cover problem
\eqref{PARAP}. However, the approach we adopt here is different
since first we introduce a space-time discretization of the noise and then
we analyze time-discrete approximations to the solution. We would like to 
note that we are not aware of another work providing a rigorous
convergence analysis for fully discrete finite element
approximations to a stochastic parabolic equation forced by the
space derivative of a space-time white noise.
We refer the reader to our previous work \cite{KZ2010},
\cite{KZ2009} and to \cite{LM2009} for the construction and the
convergence analysis of Backward Euler finite element
approximations of the solution to the problem \eqref{PARAP} when
$\mu=0$ and an additive space-time white noise $\dot W$  is forced
instead of $\partial_x{\dot W}$.
Finally, we refer the reader to \cite{GK}, \cite{ANZ},
\cite{KloedenShot}, \cite{BinLi}, \cite{YubinY05} and
\cite{Walsh05} for the analysis of the finite element method for
second order stochastic parabolic problems forced by an additive
space-time white noise.
%
%
\par
We close the section by an overview of the paper.
Section~\ref{SECTIONCHILD} introduces notation, and recalls or
proves several results often used in the paper.
Section~\ref{SECTION2} is dedicated to the estimation of the
modeling error.
Section~\ref{SECTION3} defines the Backward Euler time-discrete
approximations of ${\widehat u}$ and analyzes its convergence.
%
%
Section~\ref{SECTION44} contains the error analysis for the
Backward Euler fully-discrete approximations of ${\widehat u}$.
%
%
%
%
%
%
%
%
%
%
%
%
%
%
%
%
\section{Notation and Preliminaries}\label{SECTIONCHILD}
\subsection{Function spaces and operators}\label{Section2.2}
Let $I\subset{\mathbb R}$ be a bounded interval. We denote by
$L^2(I)$ the space of the Lebesgue measurable functions which are
square integrable on $I$ with respect to Lebesgue's measure $dx$,
provided with the standard norm $\|g\|_{\ssy 0,I}:=
\left(\int_{\ssy I}|g(x)|^2\,dx\right)^{\frac{1}{2}}$ for $g\in
L^2(I)$. The standard inner product in $L^2(I)$ that produces the
norm $\|\cdot\|_{\ssy 0,I}$ is written as $(\cdot,\cdot)_{\ssy
0,I}$, i.e., $(g_1,g_2)_{\ssy 0,I} :=\int_{\ssy
I}g_1(x)g_2(x)\,dx$ for $g_1$, $g_2\in L^2(I)$.
Let ${\mathbb N}_0$ be the set of the nonnegative integers.
For $s\in\nset_0$, $H^s(I)$ will be the Sobolev space of functions
having generalized derivatives up to order $s$ in the space
$L^2(I)$, and by $\|\cdot\|_{\ssy s,I}$ its usual norm, i.e.
$\|g\|_{\ssy s,I}:=\left(\sum_{\ell=0}^s
\|\partial^{\ell}g\|_{\ssy 0,I}^2\right)^{\frac{1}{2}}$ for $g\in
H^s(I)$. Also, by $H_0^1(I)$ we denote the subspace of $H^1(I)$
consisting of functions which vanish at the endpoints of $I$ in
the sense of trace. We note that in $H_0^1(I)$ the, well-known,
Poincar{\'e}-Friedrich inequality holds, i.e., there exists a nonegative
constant $C_{\ssy P\!F}$ such that
\begin{equation}\label{Poincare}
\|g\|_{\ssy 0,I}\leq\,C_{\ssy P\!F}\,\|\partial g\|_{\ssy 0,I}
\quad\forall\,g\in H^1_0(I).
\end{equation}
\par
The sequence of pairs
$\left(\,(\lambda_k^2,\varepsilon_k)\,\right)_{k=1}^{\infty}$ is a
solution to the eigenvalue/eigenfunction problem: find nonzero
$\varphi\in H^2(D)\cap H_0^1(D)$ and $\sigma\in\rset$ such that
$-\partial^2\varphi=\sigma\,\varphi$ in $D$.
Since $(\varepsilon_k)_{k=1}^{\infty}$ is a complete
$(\cdot,\cdot)_{\ssy 0, D}-$orthonormal system in $L^2(D)$, for
$s\in\rset$, a subspace ${\mathcal V}^s(D)$ of $L^2(D)$ is defined by
\begin{equation*}
{\mathcal V}^s(D):=\left\{v\in L^2(D):\quad\sum_{k=1}^{\infty}
\lambda_{k}^{2s} \,(v,\varepsilon_k)^2_{\ssy
0,D}<\infty\,\right\}
\end{equation*}
which is  provided with the norm
$\|v\|_{\ssy{\mathcal V}^s}:=\big(\,\sum_{k=1}^{\infty}
\lambda_{k}^{2s}\,(v,\varepsilon_k)^2_{\ssy
0,D}\,\big)^{\frac{1}{2}} \quad\forall\,v\in{\mathcal V}^s(D)$.
For $s\ge 0$, the pair $({\mathcal V}^s(D),\|\cdot\|_{\ssy{\mathcal V}^s})$ is a complete
subspace of $L^2(D)$ and we set $({\bfdot H}^s(D),\|\cdot\|_{\ssy{\bfdot H}^s})
:=({\mathcal V}^s(D),\|\cdot\|_{\ssy{\mathcal V}^s})$.
For $s<0$, we define $({\bfdot H}^s(D),\|\cdot\|_{\ssy{\bfdot H}^s})$  as the completion of
$({\mathcal V}^s(D),\|\cdot\|_{\ssy{\mathcal V}^s})$, or, equivalently, as the dual of
 $({\bfdot H}^{-s}(D),\|\cdot\|_{\ssy{\bfdot H}^{-s}})$.
%
%
%
%
%
%
%
Let $m\in\nset_0$. It is well-known (see \cite{Thomee}) that
\begin{equation}\label{dot_charact}
{\bfdot H}^m(D)=\big\{\,v\in H^m(D):
\quad\partial^{2i}v\left|_{\ssy\partial D}\right.=0
\quad\text{\rm if}\ken 0\leq{i}<\tfrac{m}{2}\,\big\}
\end{equation}
and there exist positive constants $C_{m,{\ssy A}}$ and $C_{m,{\ssy B}}$
such that
\begin{equation}\label{H_equiv}
C_{m,{\ssy A}}\,\|v\|_{\ssy m,D} \leq\|v\|_{\ssy{\bfdot H}^m}
\leq\,C_{m,{\ssy B}}\,\|v\|_{\ssy m,D},\quad \forall\,v\in{\bfdot
H}^m(D).
\end{equation}
Also, we define on $L^2(D)$ the negative norm $\|\cdot\|_{\ssy -m,
D}$ by
\begin{equation*}
\|v\|_{\ssy -m, D}:=\sup\Big\{ \tfrac{(v,\varphi)_{\ssy 0,D}}
{\|\varphi\|_{\ssy m,D}}:\quad \varphi\in{\bfdot H}^m(D)
\ken\text{\rm and}\ken\varphi\not=0\Big\}, \quad\forall\,v\in
L^2(D),
\end{equation*}
for which, using \eqref{H_equiv}, it is easy to conclude that
there exists a constant $C_{-m}>0$ such that
\begin{equation}\label{minus_equiv}
\|v\|_{\ssy -m,D}\leq\,C_{-m}\,\|v\|_{\ssy{\bfdot H}^{-m}},
\quad\forall\,v\in L^2(D).
\end{equation}
\par
Let ${\mathbb L}_2=(L^2(D),(\cdot,\cdot)_{\ssy 0,D})$ and
${\mathcal L}({\mathbb L}_2)$ be the space of linear, bounded
operators from ${\mathbb L}_2$ to ${\mathbb L}_2$. We say that, an
operator $\Gamma\in {\mathcal L}({\mathbb L}_2)$ is {\sl
Hilbert-Schmidt}, when $\|\Gamma\|_{\ssy\rm
HS}:=\left(\sum_{k=1}^{\infty} \|\Gamma(\varepsilon_k)\|^2_{\ssy
0,D}\right)^{\half}<+\infty$, where $\|\Gamma\|_{\ssy\rm HS}$ is
the so called Hilbert-Schmidt norm of $\Gamma$.
%
%
We note that the quantity $\|\Gamma\|_{\ssy\rm HS}$ does not change
when we replace $(\varepsilon_k)_{k=1}^{\infty}$ by another
complete orthonormal system of ${\mathbb L}_2$, as it is the
sequence $(\varphi_k)_{k=0}^{\infty}$ with $\varphi_0(z):=1$ and
$\varphi_k(x):=\sqrt{2}\,\cos(\lambda_k\,z)$ for $k\in{\mathbb N}$
and $z\in{\overline D}$.
It is well known (see, e.g., \cite{DunSch}) that an operator
$\Gamma\in{\mathcal L}({\mathbb L}_2)$ is Hilbert-Schmidt iff there
exists a measurable function $g:D\times D\rightarrow{\mathbb R}$
such that $(\Gamma(v))(\cdot)=\int_{\ssy D}g(\cdot,y)\,v(y)\,dy$ for
$v\in L^2(D)$, and then, it holds that
%
%
\begin{equation}\label{HSxar}
\|\Gamma\|_{\ssy\rm HS} =\left(\int_{\ssy D}\!\int_{\ssy
D}g^2(x,y)\,dxdy\right)^{\half}.
\end{equation}
Let ${\mathcal L}_{\ssy\rm HS}({\mathbb L}_2)$ be the set of
Hilbert Schmidt operators of ${\mathcal L}({\mathbb L}^2)$ and
${\Phi}:[0,T]\rightarrow {\mathcal L}_{\ssy\rm
HS}({\mathbb L}_2)$. Also, for a random variable $X$, let
${\mathbb E}[X]$ be its expected value, i.e., ${\mathbb
E}[X]:=\int_{\ssy\Omega}X\,dP$.
Then, the It{\^o} isometry property for stochastic integrals,
which we will use often in the paper, reads
\begin{equation}\label{Ito_Isom}
{\mathbb E}\left[\Big\|\int_0^{\ssy
T}{\Phi}\;dW\Big\|_{\ssy 0,D}^2\right] =\int_0^{\ssy
T}\|{\Phi}(t)\|_{\ssy\rm HS}^2\,dt.
\end{equation}
\par
Let ${\widehat\Pi}:L^2((0,T)\times D) \rightarrow L^2((0,T)\times
D)$ be a projection operator defined by
\begin{equation}\label{Defin_L2}
{\widehat \Pi}g(t,x):=\tfrac{1}{{\Delta t}}\,\sum_{i=1}^{\ssy
J_{\star}+1}\left(\sum_{\ell=1}^{\ssy J_{\star}+1}
G^{-1}_{i,\ell}\int_{\ssy T_n}\!\int_{\ssy
D}g(s,y)\,\psi_{\ell}(y)\;dsdy\right)\,\psi_i(x),
\quad\forall\,(t,x)\in T_n\times D,
\end{equation}
for $n=1,\dots,N_{\star}$ and for $g\in L^2((0,T)\times D)$, for
which holds that
\begin{equation}\label{L2L2_Bound}
\left(\int_0^{\ssy T}\!\!\!\int_{\ssy
D}({\widehat\Pi}g)^2\;dxdt\right)^{\frac{1}{2}}\leq\,\left(\int_0^{\ssy
T}\!\!\!\int_{\ssy
D}g^2\;dxdt\right)^{\frac{1}{2}},\quad\forall\,g\in L^2((0,T)\times
D).
\end{equation}
Now, in the lemma below, we relate the stochastic integral of the
projection ${\widehat\Pi}$ of a deterministic function to its
space-time $L^2-$inner product with the discrete space-time white
noise kernel ${\widehat W}$ defined in
Section~\ref{The_Reg_Problem} (cf. Lemma~2.1 in \cite{KZ2010}).
%
%
%
\begin{lemma}\label{Lhmma1}
For $g\in L^2((0,T)\times D)$, it holds that
\begin{equation}\label{WNEQ2}
\int_0^{\ssy T}\!\!\!\int_{\ssy D}\,{\widehat\Pi}g(t,x)\,dW(t,x)
=\int_0^{\ssy T}\!\!\!\int_{\ssy D}\,{\widehat
W}(s,y)\,g(s,y)\,dsdy.
\end{equation}
\end{lemma}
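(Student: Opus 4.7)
The plan is a direct bookkeeping argument: expand both sides using the definitions of $\widehat{\Pi}$, $\widehat{W}$ and $R_{n,i}$, and recognize that the two resulting finite sums coincide thanks to the symmetry of $G^{-1}$.

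First, I would introduce the abbreviation $b_{n,\ell}:=\int_{\ssy T_n}\!\int_{\ssy D}g(s,y)\,\psi_\ell(y)\,ds\,dy$, for $n=1,\dots,N_\star$ and $\ell=1,\dots,J_\star+1$. With this notation, the definition \eqref{Defin_L2} reads, for $(t,x)\in T_n\times D$,
$$\widehat{\Pi}g(t,x)=\tfrac{1}{\Delta t}\sum_{i=1}^{\ssy J_\star+1}\sum_{\ell=1}^{\ssy J_\star+1} G^{-1}_{i,\ell}\,b_{n,\ell}\,\psi_i(x),$$
which is deterministic and, crucially, constant in $t$ over each $T_n$.

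Second, I would split the left-hand side as $\sum_{n=1}^{N_\star}\int_{\ssy T_n}\!\int_{\ssy D}\widehat{\Pi}g(t,x)\,dW(t,x)$ and use linearity of the Walsh stochastic integral with respect to deterministic integrands to pull the finite basis expansion outside the integral. Together with the definition $R_{n,i}=\int_{\ssy T_n}\!\int_{\ssy D}\psi_i(x)\,dW(t,x)$ this yields
$$\int_0^{\ssy T}\!\!\int_{\ssy D}\widehat{\Pi}g\,dW=\tfrac{1}{\Delta t}\sum_{n=1}^{\ssy N_\star}\sum_{i=1}^{\ssy J_\star+1}\sum_{\ell=1}^{\ssy J_\star+1} G^{-1}_{i,\ell}\,b_{n,\ell}\,R_{n,i}.$$
For the right-hand side I would substitute the defining expression of $\widehat{W}$ into $\int_0^T\!\int_D\widehat{W}g\,ds\,dy$, split over the intervals $T_n$, and recognize $b_{n,\ell}$ inside the space-time integral against $\psi_\ell$, arriving at
$$\int_0^{\ssy T}\!\!\int_{\ssy D}\widehat{W}(s,y)\,g(s,y)\,ds\,dy=\tfrac{1}{\Delta t}\sum_{n=1}^{\ssy N_\star}\sum_{\ell=1}^{\ssy J_\star+1}\sum_{m=1}^{\ssy J_\star+1} G^{-1}_{\ell,m}\,R_{n,m}\,b_{n,\ell}.$$

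Finally, renaming the dummy index $m\to i$ on the right and using that $G^{-1}$ is symmetric (since $G$ is symmetric) identifies the two triple sums and establishes \eqref{WNEQ2}. There is no serious obstacle here; the one point requiring a moment of care is the commutation of the stochastic integral with a finite linear combination of deterministic coefficients, which is a standard linearity property of the Walsh integral.
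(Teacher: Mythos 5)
Your proposal is correct and follows essentially the same route as the paper's proof: expand both sides via the definitions of $\widehat{\Pi}$, $\widehat{W}$ and $R_{n,i}$, use linearity of the Walsh integral over the finite deterministic basis expansion, and match the resulting triple sums via the symmetry of $G^{-1}$ (which the paper uses implicitly when it switches $G^{-1}_{i,\ell}$ to $G^{-1}_{\ell,i}$ mid-computation). No gaps.
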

%
%
%
%
%
%
%
%
%
%
%
%
%
%
%
%
%
%
\begin{proof}
To obtain \eqref{WNEQ2} we work, using \eqref{Defin_L2} and the
properties of the stochastic integral, as follows:
\begin{equation*}
\begin{split}
\int_0^{\ssy T}\!\!\!\int_{\ssy D}{\widehat\Pi}g(t,x)\,dW(t,x)
=&\tfrac{1}{\Delta{t}}\, \sum_{n=1}^{\ssy N_{\star}}
\sum_{i=1}^{\ssy J_{\star}+1} \sum_{\ell=1}^{\ssy
J_{\star}+1}G^{-1}_{i,\ell}\left(
\int_{\ssy T_n\times D}g(s,y)\,\psi_{\ell}(y)\;dsdy\right)\, R_{n,i}\\
=&\tfrac{1}{\Delta{t}}\, \sum_{n=1}^{\ssy N_{\star}} \int_{\ssy
T_n\times D} g(s,y)\,\left( \,\sum_{i=1}^{\ssy J_{\star}+1}
\sum_{\ell=1}^{\ssy J_{\star}+1}G^{-1}_{i,\ell}\psi_{\ell}(y)
\,R_{n,i}\right)\;dsdy\\
=&\tfrac{1}{\Delta{t}}\, \sum_{n=1}^{\ssy N_{\star}}\,
\int_0^{\ssy T}\!\!\int_{\ssy D} {\mathcal X}_{\ssy
T_n}(s)\,g(s,y)\,\left( \,\sum_{i=1}^{\ssy J_{\star}+1}
\sum_{\ell=1}^{\ssy
J_{\star}+1}G^{-1}_{\ell,i}\,R_{n,i}\,\psi_{\ell}(y)
\,\right)\;dsdy\\
=&\int_0^{\ssy T}\!\!\!\int_{\ssy D}g(s,y) \,{\widehat
W}(s,y)\,dsdy.
\end{split}
\end{equation*}
\end{proof}
%
%
%
%
\par
We close this section by observing that: if $c_{\star}>0$, then
\begin{equation}\label{SR_BOUND}
\sum_{k=1}^{\infty}\lambda_k^{-(1+c_{\star}\epsilon)}
\leq\,\left(\tfrac{1+2c_{\star}}{c_{\star}\pi}\right)\,\tfrac{1}{\epsilon},
\quad\forall\,\epsilon\in(0,2],
\end{equation}
and if $({\mathcal H},(\cdot,\cdot)_{\ssy{\mathcal H}})$ is a
real inner product space, then 
\begin{equation}\label{innerproduct}
(g-v,g)_{\ssy{\mathcal H}}
\ge\,\tfrac{1}{2}\,\left[\,(g,g)_{\ssy{\mathcal H}}
-(v,v)_{\ssy{\mathcal H}}\,\right],\quad\forall\,g,v\in{\mathcal H}.
\end{equation}
%
%
%
\subsection{Linear elliptic and parabolic
operators}\label{SECTION31}
%
Let us define the elliptic differential operators $\Lambda_{\ssy
B},\,{\widetilde\Lambda}_{\ssy B}:{\bfdot H}^4(D)\rightarrow
L^2(D)$ by $\Lambda_{\ssy B}v:=\partial^4v+\mu\,\partial^2v$ and
${\widetilde\Lambda}_{\ssy B}v:=\Lambda_{\ssy B}v+\mu^2\,v$ for
$v\in{\bfdot H}^4(D)$, and consider the corresponding Dirichlet
fourth-order two-point boundary value problems: given $f\in
L^2(D)$ find $v_{\ssy B}$, ${\widetilde v}_{\ssy B}\in {\bfdot
H}^4(D)$ such that
\begin{equation}\label{ElOp2}
\Lambda_{\ssy B}v_{\ssy B}=f\quad\text{\rm in}\ken D
\end{equation}
and
\begin{equation}\label{ElOp2_mu}
{\widetilde\Lambda}_{\ssy B}{\widetilde v}_{\ssy
B}=f\quad\text{\rm in}\ken D.
\end{equation}
Assumption \eqref{mu_condition} yields that when $\kappa=1$ or
$\kappa\ge2$ and $\mu\not=\lambda_{\kappa-1}^2$, the operator
$\Lambda_{\ssy B}$ is invertible and
thus the problem \eqref{ElOp2} is well-posed. However, the problem
\eqref{ElOp2_mu} is always well-posed.
Letting $T_{\ssy B},\,{\widetilde T}_{\ssy
B}:L^2(D)\rightarrow{\bfdot H}^4(D)$ be the solution operator of
\eqref{ElOp2} and \eqref{ElOp2_mu}, respectively, i.e. $T_{\ssy
B}f:=\Lambda_{\ssy B}^{-1}f=v_{\ssy B}$ and ${\widetilde T}_{\ssy
B}f:={\widetilde\Lambda}_{\ssy B}^{-1}f={\widetilde v}_{\ssy B}$,
it is easy to verify that
\begin{equation}\label{Akribhs_Typos}
T_{\ssy B}f=\sum_{k=1}^{\infty}\tfrac{(\varepsilon_k,f)_{\ssy
0,D}}{\lambda_k^2(\lambda_k^2-\mu)}\,\,\varepsilon_k\quad\text{\rm
and}\quad{\widetilde T}_{\ssy
B}f=\sum_{k=1}^{\infty}\tfrac{(\varepsilon_k,f)_{\ssy
0,D}}{\lambda_k^2(\lambda_k^2-\mu)+\mu^2}
\,\,\,\varepsilon_k,\quad\forall\,f\in L^2(D),
\end{equation}
and
\begin{equation}\label{ElBihar2}
\|T_{\ssy B}f\|_{\ssy m,D}+\|{\widetilde T}_{\ssy B}f\|_{\ssy
m,D}\leq \,C_{\ssy R,m}\,\|f\|_{\ssy m-4, D}, \quad\forall\,f\in
H^{\max\{0,m-4\}}(D), \ken\forall\,m\in{\mathbb N}_0,
\end{equation}
where $C_{\ssy R,m}$ is a positive constant which is independent of $f$ but depends on 
the $D$ and $m$.
Observing that
\begin{equation*}\label{TB-prop1}
({\widetilde T}_{\ssy B}v_1,v_2)_{\ssy 0,D}
 =(v_1,{\widetilde T}_{\ssy B}v_2)_{\ssy 0,D},
\quad\forall\,v_1,v_2\in L^2(D),
\end{equation*}
and in view \eqref{Akribhs_Typos}, the
map ${\widetilde\gamma}_{\ssy B}:L^2(D)\times L^2(D)\rightarrow{\mathbb R}$
defined by
\begin{equation*}
{\widetilde\gamma}_{\ssy B}(v,w)=({\widetilde T}_{\ssy B}v,w)_{\ssy 0,D}\quad\forall\,
v, w\in L^2(D), 
\end{equation*}
is an inner product on $L^2(D)$.
%
%
%
\par
Let $({\mathcal S}(t)w_0)_{\ssy t\in[0,T]}$ be the standard
semigroup notation for the solution $w$ of \eqref{Det_Parab}.
Then, the following a priori bounds hold (see
Appendix~\ref{APP_1}):
for $\ell\in{\mathbb N}_0$, $\beta\ge0$ and $p\ge0$,
there exists a constant ${\mathcal C}_{\beta,\ell,\mu,\mu T}>0$ such that:
\begin{equation}\label{Reggo3}
\int_{t_a}^{t_b}(\tau-t_a)^{\beta}\,
\big\|\partial_t^{\ell}{\mathcal S}(\tau)w_0 \big\|_{\ssy {\bfdot
H}^p}^2\,d\tau \leq\,{\mathcal C}_{\beta,\ell,\mu,\mu T}\, \|w_0\|^2_{\ssy {\bfdot
H}^{p+4\ell-2\beta-2}}
\end{equation}
forall $w_0\in{\bfdot H}^{p+4\ell-2\beta-2}(D)$ and $t_a$, $t_b\in[0,T]$ with $t_b>t_a$.
%
%
%
%
\subsection{Discrete spaces and operators}
%
%
For $r\in\{2,3\}$,  let $M_h^r\subset
H_0^1(D)\cap H^2(D)$ be a finite element space
consisting of functions which are piecewise polynomials
of degree at most $r$ over a partition of $D$ in
intervals with maximum mesh-length $h$. It is well-known (cf.,
e.g., \cite{BrHilbert1970}) that the following approximation
property holds:
\begin{equation}\label{Sh_H2}
\inf_{\chi\in M_h^r} \|v-\chi\|_{\ssy 2,D} \leq\,C_{\ssy
F\!M,r}\,h^{s-1}\,\|v\|_{\ssy s+1,D}, \quad\,\forall\,v\in
H^{s+1}(D)\cap H_0^1(D),\quad\forall\,s\in\{2,r\},
\end{equation}
where $C_{{\ssy F\!M},r}$ is a positive constant
that depends on $r$ and is independent of $h$ and $v$.
Then,  we define the discrete elliptic operators $\Lambda_{\ssy B,h},
\,{\widetilde\Lambda}_{\ssy B,h}:M_h^r\to M_h^r$ by
\begin{equation}\label{Discrete_Elliptic}
(\Lambda_{\ssy B,h}\varphi,\chi)_{\ssy
0,D}:=(\partial^2\varphi,\partial^2\chi)_{\ssy 0,D}
+\mu\,(\partial^2\varphi,\chi)_{\ssy
0,D}, \quad\forall\,\varphi,\chi\in M_h^r,
\end{equation}
and
\begin{equation}\label{Discrete_Elliptic_Aux}
{\widetilde\Lambda}_{\ssy B,h}\varphi:=\Lambda_{\ssy
B,h}\varphi+\mu^2\,\varphi,\quad\forall\,\varphi\in M_h^r.
\end{equation}
Also, let $P_h:L^2(D)\to M_h^r$ be the usual $L^2(D)-$projection operator
onto $M_h^r$ for which it holds that
\begin{equation*}
(P_hf,\chi)_{\ssy 0,D}=(f,\chi)_{\ssy 0,D},\quad\forall\,\chi\in
M_h^r,\quad\forall\,f\in L^2(D).
\end{equation*}
%
%
\par
A finite element approximation ${\widetilde v}_{\ssy
B,h}\in M_h^r$ of the solution ${\widetilde v}_{\ssy B}$ of
\eqref{ElOp2_mu} is defined by the requirement
\begin{equation}\label{fem2_mu}
{\widetilde\Lambda}_{\ssy B,h}{\widetilde v}_{\ssy B,h}=P_hf,
\end{equation}
where the operator ${\widetilde\Lambda}_{\ssy B,h}$ is invertible since
\begin{equation}\label{mu_coercivity}
({\widetilde\Lambda}_{\ssy B,h}\chi,\chi)_{\ssy
0,D}\ge\,\tfrac{1}{2}\,\left(\,\|\partial^2\chi\|_{\ssy
0,D}^2+\mu^2\,\|\chi\|_{\ssy 0,D}^2\,\right), \quad\forall\chi\in
M_h^r.
\end{equation}
%
%
%
%
%
%
%
%
%
%
%
Thus, we denote by  ${\widetilde T}_{\ssy
B,h}:L^2(D)\to M_h^r$ the solution operator of \eqref{fem2_mu},
i.e.
\begin{equation*}
{\widetilde T}_{\ssy B,h}f:={\widetilde v}_{\ssy
B,h}={\widetilde\Lambda}_{\ssy B,h}^{-1}P_hf,\quad\forall\,f\in L^2(D).
\end{equation*}
Next, we derive an $L^2(D)$ error estimate for the finite element method \eqref{fem2_mu}.
%
%
\begin{proposition}\label{H2CASE}
Let $r\in\{2,3\}$. Then we have
\begin{equation}\label{ARA2}
\begin{split}
\|{\widetilde T}_{\ssy B}f-{\widetilde T}_{\ssy B,h}f\|_{\ssy 0,D} \leq\,C\left\{ \aligned
&h^4\,\|f\|_{\ssy 0,D},\quad r=3,\\
&h^3\,\|f\|_{\ssy -1,D},\hskip0.15truecm r=3,\\
&h^2\,\|f\|_{\ssy -1,D},\hskip0.15truecm r=2,\\
\endaligned\right.\quad\quad\forall\,f\in L^2(D),
\end{split}
\end{equation}
where $C$ is a positive constant independent of $h$ and $f$.
\end{proposition}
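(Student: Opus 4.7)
The plan is a standard Aubin--Nitsche duality argument built on Galerkin orthogonality; the three rates arise by matching, in each case, the approximation order of $M_h^r$ with the elliptic regularity pickup from \eqref{ElBihar2} on the primal and dual sides separately.

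Set $e_h:=\widetilde{T}_{\ssy B}f-\widetilde{T}_{\ssy B,h}f$ and introduce the bilinear form
$a(\varphi,\chi):=(\partial^2\varphi,\partial^2\chi)_{\ssy 0,D}+\mu\,(\partial^2\varphi,\chi)_{\ssy 0,D}+\mu^2\,(\varphi,\chi)_{\ssy 0,D}$
on $H^2(D)\cap H_0^1(D)$. First I would establish Galerkin orthogonality $a(e_h,\chi)=0$ for every $\chi\in M_h^r$: on the continuous side, two integrations by parts---using $\chi|_{\ssy\partial D}=0$ (from $M_h^r\subset H_0^1(D)$) and $\partial^2\widetilde{T}_{\ssy B}f|_{\ssy\partial D}=0$ (from $\widetilde{T}_{\ssy B}f\in{\bfdot H}^4(D)$, cf.~\eqref{dot_charact})---give $(f,\chi)_{\ssy 0,D}=a(\widetilde{T}_{\ssy B}f,\chi)$; on the discrete side \eqref{Discrete_Elliptic}, \eqref{Discrete_Elliptic_Aux} and \eqref{fem2_mu} give $a(\widetilde{T}_{\ssy B,h}f,\chi)=(P_hf,\chi)_{\ssy 0,D}=(f,\chi)_{\ssy 0,D}$, and subtracting yields the desired orthogonality. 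Coercivity \eqref{mu_coercivity} together with \eqref{Poincare} implies $a(v,v)\gtrsim\|v\|_{\ssy 2,D}^2$ on $H^2(D)\cap H_0^1(D)$, while continuity $|a(u,v)|\leq C\|u\|_{\ssy 2,D}\|v\|_{\ssy 2,D}$ is immediate; C\'ea's lemma then gives $\|e_h\|_{\ssy 2,D}\leq C\inf_{\chi\in M_h^r}\|\widetilde{T}_{\ssy B}f-\chi\|_{\ssy 2,D}$. Combining \eqref{Sh_H2} with \eqref{ElBihar2} I will extract two primal bounds: with $s=2$ (valid for both $r=2$ and $r=3$), $\|e_h\|_{\ssy 2,D}\leq Ch\,\|\widetilde{T}_{\ssy B}f\|_{\ssy 3,D}\leq Ch\,\|f\|_{\ssy-1,D}$; with $r=3$ and $s=3$, the sharper $\|e_h\|_{\ssy 2,D}\leq Ch^2\,\|f\|_{\ssy 0,D}$.

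For the $L^2$ bound I would write $\|e_h\|_{\ssy 0,D}=\sup_{g\ne0}(e_h,g)_{\ssy 0,D}/\|g\|_{\ssy 0,D}$ and, for each such $g\in L^2(D)$, set $\widetilde{w}:=\widetilde{T}_{\ssy B}g\in{\bfdot H}^4(D)$. The same integration-by-parts identity used in the orthogonality step, combined with the symmetry of $a$, yields $(e_h,g)_{\ssy 0,D}=(e_h,\widetilde\Lambda_{\ssy B}\widetilde{w})_{\ssy 0,D}=a(e_h,\widetilde{w})$, and then Galerkin orthogonality gives
\[
(e_h,g)_{\ssy 0,D}=a(e_h,\widetilde{w}-\chi)\leq C\,\|e_h\|_{\ssy 2,D}\,\|\widetilde{w}-\chi\|_{\ssy 2,D},\quad\forall\,\chi\in M_h^r.
\]
Applying \eqref{Sh_H2} and \eqref{ElBihar2} to $\widetilde{w}$ produces two dual bounds: $\inf_\chi\|\widetilde{w}-\chi\|_{\ssy 2,D}\leq Ch^2\,\|g\|_{\ssy 0,D}$ when $r=3$ (take $s=3$, $m=4$), and $\leq Ch\,\|g\|_{\ssy 0,D}$ when $r=2$ (take $s=2$, $m=3$, then use $\|g\|_{\ssy-1,D}\leq C\|g\|_{\ssy 0,D}$).

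Multiplying primal and dual estimates then gives the three target bounds by choosing $(s_{\rm pri},s_{\rm dual})=(3,3)$ for $Ch^4\|f\|_{\ssy 0,D}$ (with $r=3$), $(2,3)$ for $Ch^3\|f\|_{\ssy-1,D}$ (with $r=3$), and $(2,2)$ for $Ch^2\|f\|_{\ssy-1,D}$ (with $r=2$). The main subtlety I expect is not conceptual but organizational: one must verify that the boundary-condition content of ${\bfdot H}^4(D)$ is exactly what is needed to convert $(\widetilde\Lambda_{\ssy B}v,\chi)_{\ssy 0,D}$ into $a(v,\chi)$ against test functions $\chi\in M_h^r\subset H_0^1(D)$ (only Dirichlet conditions on $\chi$ are available, so the second boundary term must be killed by $\partial^2 v|_{\ssy\partial D}=0$), and that the primal and dual pickups in \eqref{ElBihar2} are invoked with the correct index $m$ in each of the three cases.
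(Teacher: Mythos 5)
Your proposal is correct and follows essentially the same route as the paper: Galerkin orthogonality for the bilinear form $\mathcal B(v,w)=(\partial^2v,\partial^2w)_{\ssy 0,D}+\mu(\partial^2v,w)_{\ssy 0,D}+\mu^2(v,w)_{\ssy 0,D}$, a C\'ea-type energy bound for the primal error, and an Aubin--Nitsche duality step whose primal and dual best-approximation orders are combined via \eqref{Sh_H2} and \eqref{ElBihar2} exactly as in \eqref{siriza14}. The only cosmetic difference is that the paper takes the dual datum to be $e$ itself (writing $\|e\|_{\ssy 0,D}^2={\mathcal B}(e,{\widetilde T}_{\ssy B}e)$) rather than running the duality through $\sup_{g}(e,g)_{\ssy 0,D}/\|g\|_{\ssy 0,D}$, which changes nothing of substance.
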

%
%
\begin{proof}
Let $f\in L^2(D)$, $e={\widetilde T}_{\ssy B}f-{\widetilde T}_{\ssy
B,h}f$ and ${\widetilde v}={\widetilde T}_{\ssy B}e$. To simplify the notation we
define ${\mathcal B}:H^2(D)\times H^2(D)\rightarrow{\mathbb R}$ by
${\mathcal B}(v,w):=(\partial^2v,\partial^2w)_{\ssy 0,D}+\mu\,(\partial^2v,w)_{\ssy 0,D}
+\mu^2\,(v,w)_{\ssy 0,D}$ for $v$, $w\in H^2(D)$. It is easily seen that
\begin{equation}\label{siriza12}
\gathered
{\mathcal B}(v,w)\leq\,\sqrt{2}\,(1+\mu)\,\left(\,\|\partial^2v\|^2_{\ssy 0,D}
+\mu^2\,\|v\|_{\ssy 0,D}^2\,\right)^{\frac{1}{2}}\,\|w\|_{\ssy 2,D}\quad\forall\,v,w\in H^2(D),\\
{\mathcal B}(v,v)\ge\,\tfrac{1}{2}\,\left[\,\|\partial^2v\|^2_{\ssy 0,D}
+\mu^2\,\|v\|_{\ssy 0,D}^2\,\right]\quad\forall\,v\in H^2(D).
\endgathered
\end{equation}
Later in the proof we shall use the symbol $C$ for a generic constant that
is independent of $h$ and $f$, and may changes value from one line to the other.
\par
First, we observe that $\|e\|_{\ssy 0,D}^2 ={\mathcal B}(e,{\widetilde v})$.
Then, we use the Galerkin orthogonality to get
\begin{equation*}
\|e\|_{\ssy 0,D}^2={\mathcal B}(e,{\widetilde v}-\chi),\quad\forall\,\chi\in M_h^r,
\end{equation*}
which, along with \eqref{siriza12}, leads to
\begin{equation}\label{ikariam_4}
\|e\|_{\ssy 0,D}^2\leq\,C\,\left(\,\|\partial^2e\|_{\ssy
0,D}^2+\mu^2\,\|e\|_{\ssy 0,D}^2\,\right)^{\frac{1}{2}}
\,\,\inf_{\chi\in M_h^r}\|{\widetilde v}-\chi\|_{\ssy 2,D}.
\end{equation}
Using again \eqref{siriza12} and the Galerkin orthogonality, we obtain
\begin{equation*}
\begin{split}
\|\partial^2e\|_{\ssy 0,D}^2+\mu^2
\,\|e\|_{\ssy 0,D}^2\leq&\,2\,{\mathcal B}(e,e)\\
\leq&\,2\,{\mathcal B}(e,{\widetilde T}_{\ssy B}f-\chi)\\
\leq&\,C\,\left(\,\|\partial^2e\|_{\ssy 0,D}^2
+\mu^2\,\|e\|^2_{\ssy 0,D}\,\right)^{\frac{1}{2}}\,
\|{\widetilde T}_{\ssy B}f-\chi\|_{\ssy 2,D},
\quad\forall\,\chi\in M_h^r,\\
\end{split}
\end{equation*}
which yields that
\begin{equation}\label{siriza13}
\left(\,\|\partial^2e\|_{\ssy 0,D}^2+\mu^2\,\|e\|_{\ssy 0,D}^2\,\right)^{\frac{1}{2}}
\leq\,C\,\inf_{\chi\in M_h^r}\|{\widetilde T}_{\ssy B}f-\chi\|_{\ssy 2,D}.
\end{equation}
Combining \eqref{ikariam_4}, \eqref{siriza13} and \eqref{Sh_H2},
we arrive at
\begin{equation}\label{siriza14}
\begin{split}
\|e\|_{\ssy 0,D}^2\leq&\,C\,\inf_{\chi\in M_h^r}
\|{\widetilde T}_{\ssy B}f-\chi\|_{\ssy 2,D}
\,\,\inf_{\chi\in M_h^r}\|{\widetilde v}-\chi\|_{\ssy 2,D}\\
\leq&\,C\,h^{s+s'-2}\,\|{\widetilde T}_{\ssy B}f\|_{\ssy
s+1,D}\,\|{\widetilde T}_{\ssy B}e\|_{\ssy s'+1,D},\quad
\forall\,s,s'\in\{2,r\}.
\end{split}
\end{equation}
\par
Let $r=2$. We use \eqref{siriza14} and \eqref{ElBihar2} to get
\begin{equation*}
\begin{split}
\|e\|_{\ssy 0,D}^2\leq&\,C\,h^2\,\|{\widetilde T}_{\ssy B}f\|_{\ssy
3,D}\,\|{\widetilde T}_{\ssy B}e\|_{\ssy 3,D}\\
\leq&\,C\,h^2\,\|f\|_{\ssy -1,D}\,\|e\|_{\ssy -1,D}\\
\leq&\,C\,h^2\,\|f\|_{\ssy -1,D}\,\|e\|_{\ssy 0,D},\\
\end{split}
\end{equation*} 
from which we conclude \eqref{ARA2} for $r=2$.
\par
Let $r=3$. We use \eqref{siriza14} with $s'=3$ and \eqref{ElBihar2} 
to obtain
\begin{equation*}
\begin{split}
\|e\|_{\ssy 0,D}^2\leq&\,C\,h^{s+1}\,\|{\widetilde T}_{\ssy B}f\|_{\ssy
s+1,D}\,\|{\widetilde T}_{\ssy B}e\|_{\ssy 4,D}\\
\leq&\,C\,h^{s+1}\,\|f\|_{\ssy s-3,D}\,\|e\|_{\ssy 0,D},\quad s=2,3,\\
\end{split}
\end{equation*} 
from which we conclude \eqref{ARA2} for $r=3$.
\end{proof}
%
%
%
Let ${\widetilde\gamma}_{\ssy B,h}:L^2(D)\times
L^2(D)\rightarrow{\mathbb R}$ be defined by
\begin{equation*}
{\widetilde\gamma}_{\ssy B,h}(f,g)=({\widetilde T}_{\ssy
B,h}f,g)_{\ssy 0,D}\quad\forall\,f, g\in L^2(D).
\end{equation*}
Then, as a simple consequence of \eqref{mu_coercivity}, the
following inequality holds
\begin{equation}\label{Out_of_Time2}
{\widetilde\gamma}_{\ssy B,h}(f,f)\ge\,\tfrac{1}{2}\,\left(
\|\partial^2({\widetilde T}_{\ssy B,h}f)\|_{\ssy
0,D}^2+\mu^2\,\|{\widetilde T}_{\ssy B,h}f\|_{\ssy
0,D}^2\,\right),\quad\forall\,f\in L^2(D).
\end{equation}
Thus, observing that
\begin{equation*}\label{adjo_Bh}
({\widetilde T}_{\ssy B,h}f,g)_{\ssy 0,D}=(f,{\widetilde
T}_{\ssy B,h}g)_{\ssy 0,D},\quad\forall\,f,g\in L^2(D),
\end{equation*}
and using \eqref{Out_of_Time2}, we easily conclude that
${\widetilde\gamma}_{\ssy B,h}$ is an inner product in $L^2(D)$.
We close this section with the following useful lemma.
%
%
\begin{lemma}
There exists a positive constant $C>0$ such that
\begin{equation}\label{PanwFragma1}
{\widetilde\gamma}_{\ssy B,h}(f,f)\leq\,C\,\|f\|^2_{\ssy
-2,D}, \quad\forall\,f\in L^2(D).
\end{equation}
\end{lemma}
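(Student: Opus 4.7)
The plan is to combine the duality representation of $\|\cdot\|_{-2,D}$ with the lower bound \eqref{Out_of_Time2} via a standard ``absorbing'' trick.

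First, I would write $\widetilde\gamma_{\ssy B,h}(f,f)=(f,\widetilde T_{\ssy B,h}f)_{\ssy 0,D}$ and observe that $\widetilde T_{\ssy B,h}f\in M_h^r\subset H^2(D)\cap H_0^1(D)$, so that by the characterization \eqref{dot_charact} we have $\widetilde T_{\ssy B,h}f\in{\bfdot H}^2(D)$ (here $m=2$ requires only the trace condition at the endpoints to vanish). This allows me to apply the definition of the negative norm $\|\cdot\|_{\ssy-2,D}$ with test function $\varphi=\widetilde T_{\ssy B,h}f$, obtaining
\begin{equation*}
\widetilde\gamma_{\ssy B,h}(f,f)\leq\|f\|_{\ssy-2,D}\,\|\widetilde T_{\ssy B,h}f\|_{\ssy 2,D}.
\end{equation*}

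Next, I would control $\|\widetilde T_{\ssy B,h}f\|_{\ssy 2,D}$ by the coercivity estimate. For $v\in H_0^1(D)\cap H^2(D)$, integration by parts gives $\|\partial v\|_{\ssy 0,D}^2=-(v,\partial^2v)_{\ssy 0,D}\leq\|v\|_{\ssy 0,D}\|\partial^2v\|_{\ssy 0,D}$, and together with the Poincar{\'e}-Friedrich inequality \eqref{Poincare} this yields $\|v\|_{\ssy 2,D}\leq C\,\|\partial^2 v\|_{\ssy 0,D}$ with a constant depending only on $D$. Applying this with $v=\widetilde T_{\ssy B,h}f$ and then invoking \eqref{Out_of_Time2}, I get
\begin{equation*}
\|\widetilde T_{\ssy B,h}f\|_{\ssy 2,D}\leq C\,\|\partial^2(\widetilde T_{\ssy B,h}f)\|_{\ssy 0,D}\leq C\,\bigl(2\,\widetilde\gamma_{\ssy B,h}(f,f)\bigr)^{\frac{1}{2}}.
\end{equation*}

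Finally, I would combine the two displays to obtain $\widetilde\gamma_{\ssy B,h}(f,f)\leq C'\,\|f\|_{\ssy-2,D}\,\bigl(\widetilde\gamma_{\ssy B,h}(f,f)\bigr)^{\frac{1}{2}}$, and square to conclude \eqref{PanwFragma1} (the case $\widetilde\gamma_{\ssy B,h}(f,f)=0$ being trivial). I expect the only subtle point is the first step: verifying that $\widetilde T_{\ssy B,h}f$ is an admissible test function for the $\|\cdot\|_{\ssy-2,D}$ duality, which works because $\dot H^2(D)$ only requires Dirichlet (not higher-order) boundary conditions and these are built into $M_h^r$. Using the Poincar{\'e}--Friedrich--based bound rather than the $\mu^2\|\cdot\|_{\ssy 0,D}^2$ term in \eqref{Out_of_Time2} is what keeps the argument valid even if $\mu=0$.
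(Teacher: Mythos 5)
Your proof is correct, but it takes a genuinely different route from the paper's. You work entirely on the discrete side: you test the duality definition of $\|\cdot\|_{\ssy -2,D}$ with $\varphi={\widetilde T}_{\ssy B,h}f$ (which is legitimate, since $M_h^r\subset H^2(D)\cap H_0^1(D)={\bfdot H}^2(D)$ by \eqref{dot_charact} with $m=2$), control $\|{\widetilde T}_{\ssy B,h}f\|_{\ssy 2,D}$ by $\|\partial^2({\widetilde T}_{\ssy B,h}f)\|_{\ssy 0,D}$ via Poincar{\'e}--Friedrich, absorb with \eqref{Out_of_Time2}, and divide. The paper instead introduces the continuous solution $\psi={\widetilde T}_{\ssy B}f$, writes $({\widetilde T}_{\ssy B,h}f,f)_{\ssy 0,D}=({\widetilde\Lambda}_{\ssy B}\psi,\psi_h)_{\ssy 0,D}$, derives the discrete stability bound $\|\partial^2\psi_h\|_{\ssy 0,D}^2+\mu^2\|\psi_h\|_{\ssy 0,D}^2\leq 16\,(\|\partial^2\psi\|_{\ssy 0,D}^2+\mu^2\|\psi\|_{\ssy 0,D}^2)$, and finishes with the elliptic regularity estimate \eqref{ElBihar2} for ${\widetilde T}_{\ssy B}$ in the $\|\cdot\|_{\ssy -2,D}$ norm. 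Your argument is shorter and more self-contained — it never invokes the continuous operator or \eqref{ElBihar2} — and, as you note, it degrades gracefully when $\mu=0$ because it leans only on the $\|\partial^2\cdot\|_{\ssy 0,D}^2$ part of the coercivity; the paper's version has the side benefit of producing a discrete $H^2$-stability comparison between $\psi_h$ and $\psi$ of independent interest in finite element analysis. Both yield a constant independent of $h$, so either proof is acceptable here.
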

%
%
%
%
\begin{proof}
Let $f\in L^2(D)$, $\psi={\widetilde T}_{\ssy B}f$ and
$\psi_h={\widetilde T}_{\ssy B,h}f$. Then, we have
\begin{equation}\label{zarkadi1}
\begin{split}
({\widetilde T}_{\ssy B,h}f,f)_{\ssy
0,D}=&\,({\widetilde\Lambda}_{\ssy
B}\psi,\psi_h)_{\ssy 0,D}\\
=&\,(\partial^2\psi,\partial^2\psi_h)_{\ssy 0,D}
+\mu\,(\partial^2\psi,\psi_h)_{\ssy 0,D}
+\mu^2\,(\psi,\psi_h)_{\ssy 0,D}\\
\leq&\,\tfrac{1}{\varepsilon}\,\left(\,\|\partial^2\psi\|_{\ssy
0,D}^2+\mu^2\,\|\psi\|^2_{\ssy
0,D}\,\right)+\varepsilon\,\left(\, \|\partial^2\psi_h\|_{\ssy 0,D}^2
+\mu^2\,\|\psi_h\|_{\ssy 0,D}^2\,\right),
\quad\forall\,\varepsilon>0.\\
\end{split}
\end{equation}
Setting $\varepsilon=\frac{1}{4}$ in \eqref{zarkadi1} and then
combining it with \eqref{Out_of_Time2}, we obtain
\begin{equation}\label{zarkadi3}
\|\partial^2\psi_h\|_{\ssy 0,D}^2+\mu^2\,\|\psi_h\|_{\ssy
0,D}^2\leq\,16\,\left(\,\|\partial^2\psi\|_{\ssy 0,D}^2
+\mu^2\,\|\psi\|_{\ssy 0,D}^2\,\right).
\end{equation}
Finally, \eqref{zarkadi1} with $\varepsilon=\tfrac{1}{4}$, \eqref{zarkadi3}
and \eqref{ElBihar2} yield
\begin{equation*}\label{zarkadi2}
\begin{split}
{\widetilde\gamma}_{\ssy B,h}(f,f)\leq&\,8\,\left(\,
\|\partial^2\psi\|_{\ssy
0,D}^2+\mu^2\,\|\psi\|_{\ssy 0,D}^2\,\right)\\
\leq&\,8\,(1+\mu^2)\,\|{\widetilde T}_{\ssy B}f\|_{\ssy 2,D}^2\\
\leq&\,8\,(1+\mu^2)\,C_{\ssy R,2}\,\|f\|_{\ssy -2,D}^2.\\
\end{split}
\end{equation*}
Thus, we arrived at \eqref{PanwFragma1}.
\end{proof}
%
%
%
%
%
%
\section{An Estimate for the Modeling Error}\label{SECTION2}
In this section, we estimate the modeling error in terms of
$\Delta{t}$ and $\Delta{x}$ (cf. Theorem~3.1 in \cite{KZ2010}).
%
%
\begin{theorem}\label{BIG_Qewrhma1}
Let $u$ be the solution of \eqref{PARAP} and ${\widehat u}$ be the
solution of \eqref{AC2}. Then, there exists a real constant ${\widetilde C}>0$,
independent of $\Delta{t}$ and $\Delta{x}$, such that
\begin{equation}\label{ModelError}
\max_{[0,T]}\left(\,{\mathbb E}\left[\|u-{\widehat u}\|_{\ssy
0,D}^2\right]\,\right)^{\half}
\leq\,{\widetilde C}\,\left[\,\omega_0(\Delta{t})\,\Delta{t}^\frac{1}{8}
+\epsilon^{-\frac{1}{2}}\,\Delta{x}^{\frac{1}{2}-\epsilon}\,\right],
\quad\forall\,\epsilon\in\left(0,\tfrac{1}{2}\right],
\end{equation}
where $\omega_0(\Delta{t}):=\sqrt{1+\Delta{t}^{\frac{3}{4}}}$.
\end{theorem}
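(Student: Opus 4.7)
The plan is to reduce the modeling error to a deterministic $L^2$-estimate via the It\^o isometry. For each fixed $(t,x)$, applying Lemma~\ref{Lhmma1} with $g(s,y)={\mathcal X}_{\ssy(0,t)}(s)\,\Psi(t-s;x,y)$ rewrites the mild formula \eqref{HatUform} as $\widehat u(t,x)=\int_0^{\ssy T}\!\int_{\ssy D}\widehat\Pi g(s,y)\,dW(s,y)$. Subtracting from \eqref{MildSol}, integrating over $x\in D$, and invoking \eqref{Ito_Isom} with Fubini yields
$${\mathbb E}\bigl[\|u(t,\cdot)-\widehat u(t,\cdot)\|_{\ssy 0,D}^2\bigr]=\int_{\ssy D}\!\int_0^{\ssy T}\!\!\int_{\ssy D}\bigl|(I-\widehat\Pi)\bigl[{\mathcal X}_{\ssy(0,t)}\Psi(t-\cdot;x,\cdot)\bigr](s,y)\bigr|^2\,dy\,ds\,dx.$$
The operator $\widehat\Pi$ factors as the commuting composition $\Pi_t\circ\Pi_x$, where $\Pi_t$ is the $L^2$-projection (in $s$) onto functions piecewise constant on $\{T_n\}$ and $\Pi_x$ is the $L^2(D)$-projection (in $y$) onto ${\mathcal S}_\star$. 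The identity $I-\Pi_t\Pi_x=(I-\Pi_t)+\Pi_t(I-\Pi_x)$, combined with $\|\Pi_t\|\leq 1$ and the triangle inequality, splits the bound into a \emph{space-projection} term and a \emph{time-averaging} term, to be estimated separately.

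For the space-projection term I substitute the spectral expansion \eqref{GreenKernel} and use orthogonality of the $\varepsilon_k$ in $x$ together with $(\sin(\lambda_k\cdot),\sin(\lambda_j\cdot))_{\ssy 0,D}=\tfrac12\delta_{k,j}$ to reduce the contribution to
$$2\sum_{k=1}^{\infty}\lambda_k^2\int_0^{t}e^{-2\lambda_k^2(\lambda_k^2-\mu)(t-s)}\,\bigl\|(I-\Pi_x)\cos(\lambda_k\,\cdot)\bigr\|_{\ssy 0,D}^2\,ds.$$
A fractional approximation estimate for the $L^2(D)$-projection onto continuous piecewise linears gives $\|(I-\Pi_x)\cos(\lambda_k\cdot)\|_{\ssy 0,D}^2\leq C\min\{1,(\lambda_k\Delta x)^{1-2\epsilon}\}$ for any $\epsilon\in(0,\tfrac12]$, while the $s$-integration supplies $(\lambda_k^2-\mu)^{-1}\lesssim\lambda_k^{-2}$ for $k\geq\kappa$ (the finitely many smaller $k$ being handled via \eqref{mu_condition}). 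The total is thus dominated by $\Delta x^{1-2\epsilon}\sum_k\lambda_k^{-(1+2\epsilon)}$; applying \eqref{SR_BOUND} with $c_\star=2$ and taking square roots delivers the required $\epsilon^{-1/2}\Delta x^{1/2-\epsilon}$.

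The time-averaging term is the main obstacle, since $\Psi(t-s;\cdot,\cdot)$ becomes singular as $s\uparrow t$. Let $n_\star$ be the index with $t\in[t_{n_\star-1},t_{n_\star}]$; I treat the intervals $T_n$ with $n<n_\star$ and the interval $T_{n_\star}$ separately. For $n<n_\star$, I combine Jensen's inequality with Parseval in $(x,y)$ and the elementary bound $|e^{-a}-e^{-b}|\leq e^{-\min(a,b)}\,|a-b|^{1/4}$ valid for $a,b\geq 0$; the resulting $|s-s'|^{1/4}$ supplies $\Delta t^{1/4}$ after integration, the exponential prefactor sums geometrically in $n$, and a dyadic split of the $k$-series at $\lambda\sim\Delta t^{-1/4}$ renders the total summable. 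On the singular interval $T_{n_\star}$ I discard the projection, bounding $\|(I-\Pi_t)\Psi\|\leq\|\Psi\|$ and using $\int_{T_{n_\star}}\lambda_k^2e^{-2\lambda_k^2(\lambda_k^2-\mu)(t-s)}\,ds\leq C\min\{\Delta t\,\lambda_k^2,\lambda_k^{-2}\}$; the same frequency splitting contributes $\Delta t^{1/4}+O(\Delta t)$. Reassembling via \eqref{SR_BOUND} yields a total of the form $\Delta t^{1/4}(1+\Delta t^{3/4})$, whose square root is precisely $\omega_0(\Delta t)\,\Delta t^{1/8}$.
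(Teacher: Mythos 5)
Your architecture is the paper's own: the reduction via Lemma~\ref{Lhmma1} and the It{\^o} isometry, the splitting $I-\widehat\Pi=(I-\Pi_t)+\Pi_t(I-\Pi_x)$ (which is exactly the decomposition into $\Theta_{\ssy A}$ and $\Theta_{\ssy B}$ in \eqref{corv1}), and the spatial estimate via the projection error of $\cos(\lambda_k\cdot)$ together with \eqref{SR_BOUND} all match \eqref{corv2}--\eqref{corv5} and correctly deliver $\epsilon^{-1/2}\Delta{x}^{1/2-\epsilon}$. Your handling of the singular interval $T_{n_\star}$ and the final reassembly into $\omega_0(\Delta t)\,\Delta t^{1/8}$ are also sound.

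The gap is in the time-averaging term on the intervals $T_n$ with $n<n_\star$, for the high frequencies. Taken as written, your estimate combines the unclipped bound $|e^{-a}-e^{-b}|\le e^{-\min(a,b)}|a-b|^{1/4}$ with a geometric summation of the exponential prefactor over $n$. For a mode $k$ with $\mu_k\Delta t\ge 1$ (i.e.\ $\lambda_k\gtrsim\Delta t^{-1/4}$) this yields a contribution of order $\lambda_k^2\cdot\Delta t\,(\mu_k\Delta t)^{1/2}\cdot\big(1-e^{-2\mu_k\Delta t}\big)^{-1}\sim\lambda_k^4\,\Delta t^{3/2}$, and $\sum_{\lambda_k\ge\Delta t^{-1/4}}\lambda_k^4\Delta t^{3/2}$ diverges; a frequency split cannot resum a divergent tail. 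Two repairs are required, and both are precisely what the paper's computation \eqref{siriza1}--\eqref{corv7} carries out: (i) the H{\"o}lder factor must be clipped, $|e^{-a}-e^{-b}|\le e^{-\min(a,b)}\min\{1,|a-b|\}^{1/4}$, so that above the threshold the trivial bound is used; and (ii) the factor $e^{-2\mu_k(t-s)}$ must be integrated exactly in $s$ over $(0,t)$ (telescoping over the $T_n$), which produces the crucial $\mu_k^{-1}$, whereas bounding it on each $T_n$ and summing geometrically only gives $\Delta t\,(1-e^{-2\mu_k\Delta t})^{-1}\sim\Delta t$ for high modes and loses the factor $(\mu_k\Delta t)^{-1}$. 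With these corrections the high modes contribute $\sum_{\lambda_k\ge\Delta t^{-1/4}}\lambda_k^{-2}\lesssim\Delta t^{1/4}$ and the low modes contribute $\lesssim\Delta t^{1/2}\cdot\#\{k:\lambda_k\le\Delta t^{-1/4}\}\lesssim\Delta t^{1/4}$, recovering \eqref{corv9}--\eqref{corv12}. A second, minor omission: for $1\le k\le\kappa-1$ one may have $\mu_k=\lambda_k^2(\lambda_k^2-\mu)\le 0$, so your inequality, stated for $a,b\ge0$, does not apply; these finitely many modes must be treated separately, as in \eqref{siriza2} and \eqref{siriza4}, and contribute $O(\Delta t)$.
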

%
%
%
%
%
%
%
%
%
%
%
%
\begin{proof}
Using \eqref{MildSol}, \eqref{HatUform} and Lemma~\ref{Lhmma1}, we
conclude that
\begin{equation}\label{corv0}
u(t,x)-{\widehat u}(t,x)=\int_0^{\ssy T}\!\!\!\int_{\ssy D}
\big[{\mathcal X}_{(0,t)}(s)\,\Psi(t-s;x,y) -{\widetilde
\Psi}(t,x;s,y)\big]\,dW(s,y), \quad\forall\,(t,x)\in[0,T]\times
{\overline D},
\end{equation}
where ${\widetilde{\Psi}}: (0,T)\times{D}\rightarrow
L^2((0,T)\times D)$ is given by
\begin{equation*}
{\widetilde \Psi}(t,x;s,y):=\tfrac{1}{\Delta{t}} \int_{\ssy T_n}
{\mathcal X}_{(0,t)}(s') \,\left[\,\sum_{i=1}^{\ssy
J_{\star}+1}\psi_i(y)\left(\,\sum_{\ell=1}^{\ssy
J_{\star}+1}\,G_{i,\ell}^{-1}\,\int_{\ssy
D}\Psi(t-s';x,y')\,\psi_{\ell}(y')\;dy'\,\right)\,\right]\,ds',
\quad\forall\,(s,y)\in T_n\times{D},
\end{equation*}
for $n=1,\dots,N_{\star}$.
\par
Let $\Theta:=\left\{{\mathbb E}\left[ \|u-{\widehat u}\|_{\ssy
0,D}^2\right]\right\}^{\half}$ and $t\in(0,T]$. Using
\eqref{corv0} and It{\^o} isometry \eqref{Ito_Isom}, we obtain
\begin{equation*}
\Theta(t)=\Bigg\{\int_0^{\ssy T}\int_{\ssy D}\!\int_{\ssy
D}\,\left[ {\mathcal X}_{(0,t)}(s)\,\Psi(t-s;x,y)-{\widetilde
\Psi}(t,x;s,y)\right]^2\;dxdyds \Bigg\}^{\frac{1}{2}}.
\end{equation*}
Now, we introduce the splitting
\begin{equation}\label{corv1}
\Theta(t) \leq\,\Theta_{\ssy A}(t)
+\Theta_{\ssy B}(t),
\end{equation}
where
\begin{equation*}
\Theta_{\ssy A}(t):=\left\{\sum_{n=1}^{\ssy N_{\star}}\int_{\ssy
D}\!\int_{\ssy D}\int_{\ssy
T_n}\left[\tfrac{1}{\Delta{t}}\,\int_{\ssy T_n}{\mathcal
X}_{(0,t)}(s')\,\Psi(t-s';x,y)\,ds'-{\widetilde
\Psi}(t,x;s,y)\right]^2\,dxdyds\right\}^{\frac{1}{2}}
\end{equation*}
and
\begin{equation*}
\Theta_{\ssy B}(t):=\left\{\sum_{n=1}^{\ssy N_{\star}}\int_{\ssy
D}\!\int_{\ssy D}\int_{\ssy T_n}\left[{\mathcal
X}_{(0,t)}(s)\,\Psi(t-s;x,y)-\tfrac{1}{\Delta{t}}\,\int_{\ssy
T_n}{\mathcal
X}_{(0,t)}(s')\,\Psi(t-s';x,y)\,ds'\right]^2\,dxdyds\right\}^{\frac{1}{2}}.
\end{equation*}
Also, to simplify the notation in the rest of the proof, we set
$\mu_k:=\lambda_k^2\,(\lambda_k^2-\mu)$ for $k\in{\mathbb N}$, and 
use the symbol $C$ to denote a generic constant that is independent
of $\Delta{t}$ and $\Delta{x}$ and may changes value from one line to the other.
%
%
\par
\textbullet\, {\tt Estimation of $\Theta_{\ssy A}(t)$}: Using
\eqref{GreenKernel} and the $(\cdot,\cdot)_{\ssy
0,D}-$orthogonality of $(\varepsilon_k)_{k=1}^{\infty}$, we have
\begin{equation*}
\begin{split}
\Theta^2_{\ssy A}(t) &=\,\tfrac{1}{\Delta{t}} \sum_{n=1}^{\ssy
N_{\star}}\int_{\ssy D}\int_{\ssy D} \Bigg[\int_{\ssy
T_n}{\mathcal X}_{(0,t)}(s')
\,\Big[\Psi(t-s';x,y)-\sum_{\ell,i=1}^{\ssy
J_{\star}+1}G_{i,\ell}^{-1}\,\left(\Psi(t-s';x,\cdot),
\psi_{\ell}(\cdot)\right)_{\ssy 0,D}
\,\psi_i(y)\Big]\,ds'\Bigg]^2\,dydx\\
&=\,\tfrac{1}{\Delta{t}} \sum_{n=1}^{\ssy N_{\star}}
\Bigg[\sum_{k=1}^{\infty}\,\left(\int_{\ssy T_n} {\mathcal
X}_{(0,t)}(s')\,e^{-\mu_k(t-s')}\;ds'\right)^2 \,\int_{\ssy
D}\Big(\varepsilon_k'(y)-\sum_{\ell,i=1}^{\ssy
J_{\star}+1}G^{-1}_{i,\ell}
\,(\varepsilon_k',\psi_{\ell})_{\ssy 0,D}\,\psi_i(y)\Big)^2\;dy\Bigg]\\
\end{split}
\end{equation*}
from which, using the Cauchy-Schwarz inequality, follows that
\begin{equation}\label{corv2}
\Theta^2_{\ssy A}(t)\leq\sum_{k=1}^{\kappa}A_k(t)\,B_k
+\sum_{k=\kappa+1}^{\infty}A_k(t)\,B_k,
\end{equation}
%
%
%
where
\begin{equation*}
\begin{split}
A_k(t):=&\,2\,\lambda_k^2\int_0^t e^{-2\mu_k(t-s')}\,ds',\\
B_k:=&\,\int_{\ssy D}\Big(\varphi_k(y)-\sum_{\ell,i=1}^{\ssy
J_{\star}+1}G^{-1}_{i,\ell}\,(\varphi_k,\psi_{\ell})_{\ssy
0,D}\,\psi_{i}(y)\Big)^2\;dy.\\
\end{split}
\end{equation*}
First, we observe that
\begin{equation}\label{2009b}
\begin{split}
\sqrt{B_k}\leq&\,\max_{1\leq{j}\leq{\ssy
J_{\star}}}\,\,\sup_{x,y\in{\ssy D_j}}
\left|\,\varphi_k(x)-\varphi_k(y)\,\right|\\
\leq&\,\min\{1,\,\lambda_k\Delta{x}\}\\
\leq&\,\min\left\{1,(\sqrt{2}\lambda_k\Delta{x})^{\theta}\right\},
\quad\forall\,\theta\in[0,1],\quad\forall\,k\in{\mathbb N}.\\
\end{split}
\end{equation}
Next, we use \eqref{mu_condition}, to obtain
\begin{equation}\label{2009a}
\begin{split}
A_k(t)\leq&\,\tfrac{1-e^{-2\mu_kt}}{\lambda_k^2-\mu}\\
<&\,\tfrac{(\kappa+1)^2}{1+2\,\kappa}
\,\tfrac{1}{\lambda_k^2},\quad\forall\,k\ge\kappa+1.\\
\end{split}
\end{equation}
Thus, from \eqref{corv2}, \eqref{2009b} and \eqref{2009a},
we conclude that
\begin{equation*}
\Theta_{\ssy A}^2(t)\leq\,C\,\left(\,(\Delta{x})^2
\,\sum_{k=1}^{\kappa}\lambda_k^2
+(\Delta{x})^{2\theta}
\,\sum_{k=\kappa+1}^{\infty}\tfrac{1}{\lambda_k^{2-2\theta}}\right)
\end{equation*}
which yields
\begin{equation}\label{corv5}
\Theta_{\ssy A}(t)\leq\,C\,(\Delta{x})^{\theta}\
\,\left(\,\sum_{k=1}^{\infty}\frac{1}{\lambda_k^{1+2(\frac{1}{2}-\theta)}}
\,\right)^{\frac{1}{2}},\quad\forall\,\theta\in[0,\tfrac{1}{2}).
\end{equation}
%
%
%
\par
\textbullet\,
{\tt Estimation of $\Theta_{\ssy B}(t)$}:
For $t\in (0,T]$, let
${\widehat N}(t):= \min\big\{\,\ell\in\nset:\ken 1\leq \ell\leq
N_{\star} \ken\text{\rm and}\ken t\leq t_{\ell}\,\big\}$
and
\begin{equation*}
{\widehat T}_n(t):=T_n\cap (0,t)=\left\{
\begin{aligned}
&T_n,\quad\quad\quad\quad\text{\rm if}\ken n<{\widehat N}(t)\\
&(t_{\ssy {\widehat N}(t)-1},t),\hskip0.31truecm
\text{\rm if}\ken n={\widehat N}(t)\\
\end{aligned}
\right.,\quad n=1,\dots,{\widehat N}(t).
\end{equation*}
Thus, using \eqref{GreenKernel} and the $(\cdot,\cdot)_{\ssy
0,D}-$orthogonality of $(\varepsilon_k)_{k=1}^{\infty}$ and
$(\varphi_k)_{k=1}^{\infty}$ as follows
\begin{equation*}
\begin{split}
\Theta^2_{\ssy B}(t)&= \tfrac{1}{(\Delta{t})^2}
\sum_{n=1}^{N_{\star}} \int_{\ssy D}\!\int_{\ssy D}\int_{\ssy T_n}
\Bigg[\int_{\ssy T_n}\Big[{\mathcal X}_{(0,t)}(s)\,\Psi(t-s;x,y)
-{\mathcal X}_{(0,t)}(s')\,\Psi(t-s';x,y)\Big]\,ds'
\Bigg]^2\,dxdyds\\
&= \tfrac{1}{(\Delta{t})^2} \sum_{n=1}^{N_{\star}} \int_{\ssy
D}\!\int_{\ssy D}\int_{\ssy T_n}
\Bigg[\sum_{k=1}^{\infty}\lambda_k\,\varepsilon_k(x)\,\varphi_k(y)\int_{\ssy
T_n}\Big[{\mathcal X}_{(0,t)}(s)\,e^{-\mu_k(t-s)} -{\mathcal
X}_{(0,t)}(s')\,e^{-\mu_k(t-s')}\Big]\,ds'
\Bigg]^2\,dxdyds\\
\end{split}
\end{equation*}
we conclude that
\begin{equation}\label{corv6}
\Theta^2_{\ssy B}(t)\leq\,
\sum_{k=1}^{\infty}\,\lambda_k^2\,\left(\,\tfrac{1}{(\Delta{t})^2}\,
\sum_{n=1}^{{\widehat N}(t)}\Psi_n^k(t)\,\right),
\end{equation}
where
\begin{equation*}
\Psi_n^k(t):=\int_{\ssy T_n} \left[\,\int_{\ssy T_n}\left(
{\mathcal X}_{(0,t)}(s)\,e^{-\mu_k(t-s)} -{\mathcal
X}_{(0,t)}(s')\,e^{-\mu_k(t-s')} \right)\,ds'\,\right]^2\,ds.
\end{equation*}
Let $k\in{\mathbb N}$ and $n\in\{1,\dots,{\widehat N}(t)-1\}$.
Then, we have
\begin{equation*}\label{siriza1}
\begin{split}
\Psi_n^k(t)&=\,\int_{\ssy T_n}\Big(\,\int_{\ssy T_n}
\!\!\int_s^{s'} \mu_k\,
e^{-\mu_k(t-\tau)}\,d\tau ds'\,\Big)^2\,ds\\
&\leq\,\int_{\ssy T_n}
\Big(\,\int_{\ssy T_n}\!\!\int_{t_{n-1}}^{\max\{s',s\}}
\mu_k\,e^{-\mu_k(t-\tau)}\,d\tau ds'\,\Big)^2\,ds\\
&\leq\,2\int_{\ssy T_n}\Big(\, \int_{\ssy
T_n}\!\!\int_{t_{n-1}}^{s'} \mu_k\,e^{-\mu_k(t-\tau)}\,d\tau
ds'\,\Big)^2\,ds +2\int_{\ssy T_n}\Big(\, \int_{\ssy
T_n}\!\!\int_{t_{n-1}}^s\mu_k\,
e^{-\mu_k(t-\tau)}\,d\tau\,ds'\,\Big)^2\,ds\\
&\leq\,2\,\Delta{t}\,\Big(\, \int_{\ssy
T_n}\!\!\int_{t_{n-1}}^{s'} \mu_k\, e^{-\mu_k(t-\tau)}\,d\tau
ds'\Big)^2 +2\,(\Delta{t})^2\,\int_{\ssy
T_n}\Big(\int_{t_{n-1}}^{s} \mu_k\,
e^{-\mu_k(t-\tau)}\,d\tau\Big)^2\,ds,\\
\end{split}
\end{equation*}
from which, after using the Cauchy-Schwarz inequality, we arrive at
\begin{equation}\label{siriza1}
\Psi_n^k(t)\leq\,4\,(\Delta{t})^2\,\int_{\ssy
T_n}\Big(\int_{t_{n-1}}^{s} \mu_k\,
e^{-\mu_k(t-\tau)}\,d\tau\Big)^2\,ds.
\end{equation}
For $k\leq\kappa$, we use \eqref{siriza1} to get
\begin{equation}\label{siriza2}
\Psi_n^k(t)\leq\,4\,\max_{1\leq{k}\leq\kappa}(\mu_k)^2\,(\Delta{t})^5.
\end{equation}
For $k\ge\kappa+1$, we use \eqref{siriza1} to have
\begin{equation}\label{siriza3}
\begin{split}
\Psi_n^k(t)&\leq\,4\,(\Delta{t})^2\,\int_{\ssy T_n}
\Big(\,e^{-\mu_k(t-s)}-e^{-\mu_k(t-t_{n-1})}\,\Big)^2\,ds\\
&\leq\,4\,(\Delta{t})^2\,\big(1-e^{-\mu_k\Delta{t}}\big)^2
\int_{\ssy T_n}e^{-2\mu_k(t-s)}\,ds\\
&\leq\,2\,(\Delta{t})^2\,\big(1-e^{-\mu_k\Delta{t}}\big)^2
\,\tfrac{e^{-\mu_k(t-t_n)}
-e^{-\mu_k(t-t_{n-1})}}{\mu_k}\cdot\\
\end{split}
\end{equation}
Summing with respect to $n$, and using \eqref{siriza1}, \eqref{siriza2} and \eqref{siriza3},
we obtain
\begin{equation}\label{corv7}
\tfrac{1}{(\Delta{t})^2}\,\sum_{n=1}^{{\widehat
N}(t)-1}\Psi_n^k(t)\leq\,C\,\left\{
\aligned
&(\Delta{t})^2,\hskip1.2truecm k\leq\kappa,\\
&\tfrac{(1-e^{-\mu_k\Delta{t}})^2}{\mu_k},\quad k\ge\kappa+1\\
\endaligned\right.\cdot
\end{equation}
Considering, now, the case $n={\widehat N}(t)$, we have
\begin{equation}\label{gaga}
\Psi_{\ssy {\widehat N}(t)}^k(t)=\Psi^k_{\ssy A}(t)+\Psi_{\ssy B}^k(t)
\end{equation}
with
\begin{equation*}
\begin{split}
\Psi_{\ssy A}^k(t)&:=\int_{t_{{\widehat N}(t)-1}}^t
\left(\,\int_{t_{{\widehat N}(t)-1}}^t
\int_{s'}^s\mu_k\,e^{-\mu_k(t-\tau)}\,d\tau{ds'}
+\int_t^{t_{{\widehat N}(t)}} e^{-\mu_k(t-s)}\,ds'
\,\right)^2\,ds,\\
\Psi_{\ssy B}^k(t)&:=\int_t^{t_{{\widehat N}(t)}}\left(\,
\int_{t_{{\widehat N}(t)}-1}^t
e^{-\mu_k(t-s')}\,ds'\,\right)^2\,ds.\\
\end{split}
\end{equation*}
For $k\leq\kappa$, we obtain
\begin{equation}\label{siriza4}
\tfrac{1}{(\Delta{t})^2}\,\Psi_{\ssy {\widehat N}(t)}^k(t)\leq\,C\,\Delta{t}.
\end{equation}
For $k\ge\kappa+1$, we have
\begin{equation*}
\begin{split}
\Psi_{\ssy B}^k(t)&\leq\,\tfrac{\Delta{t}}{\mu_k^2}\,
\Big[\,1-e^{-\mu_k\,
\big(\,t-t_{{\widehat N}(t)-1}\,\big)}\,\Big]^2\\
&\leq\,\tfrac{\Delta{t}}{\mu_k^2}\,
\big(\,1-e^{-\mu_k\,\Delta{t}}\,)^2\\
\end{split}
\end{equation*}
and
\begin{equation*}
\begin{split}
\Psi_{\ssy A}^k(t)&\leq\,\int_{t_{{\widehat N}(t)-1}}^t \left[
\int_{t_{{\widehat N}(t)-1}}^t
\int_{s'}^s\mu_k\,e^{-\mu_k(t-\tau)}\,d\tau{ds'}
+\Delta{t}\,\,\,e^{-\mu_k(t-s)}
\right]^2\,ds \\
&\leq\,2\,\int_{t_{{\widehat N}(t)-1}}^t \left[ \int_{t_{{\widehat
N}(t)-1}}^t\int_{s'}^s \mu_k\,e^{-\mu_k(t-\tau)}\,d\tau{ds'}
\right]^2\,ds +\tfrac{(\Delta{t})^2}{\mu_k}\,
\left[\,1-e^{-2\mu_k\left(\,t-t_{{\widehat N}(t)-1}\,\right)}\,\right]\\
&\leq\, 2\,\int_{t_{{\widehat N}(t)-1}}^t \left[
\int_{t_{{\widehat N}(t)-1}}^t\int_{t_{{\widehat
N}(t)-1}}^{\max\{s,s'\}} \mu_k\,e^{-\mu_k(t-\tau)}\,d\tau{ds'}
\right]^2\,ds
+\tfrac{(\Delta{t})^2}{\mu_k}\,\big(\,1-e^{-2\mu_k\,\Delta{t}}\,\big)\\
&\leq \,8\,(\Delta{t})^2 \int_{t_{{\widehat N}(t)-1}}^t
\left[\,\int_{t_{{\widehat N}(t)-1}}^s\mu_k\,
e^{-\mu_k(t-\tau)}\,d\tau\,\right]^2\,ds
+\tfrac{(\Delta{t})^2}{\mu_k}\,\big(\,1-e^{-2\mu_k\,\Delta{t}}\,\big)\\
&\leq \,8\,(\Delta{t})^2\, \int_{t_{{\widehat N}(t)-1}}^t
\left[\,e^{-\mu_k(t-s)} -e^{-\mu_k(t-t_{{\widehat
N}(t)-1})}\,\right]^2\,ds
+\tfrac{(\Delta{t})^2}{\mu_k}\,\big(\,1-e^{-2\mu_k\,\Delta{t}}\,\big),\\
\end{split}
\end{equation*}
which, along with \eqref{gaga}, gives
\begin{equation*}
\Psi_{\ssy {\widehat N}(t)}^k\leq\,5\tfrac{(\Delta{t})^2}{\mu_k}
\,\big(\,1-e^{-2\mu_k\,\Delta{t}}\,\big)
+\tfrac{\Delta{t}}{\mu_k^2}\,
\big(\,1-e^{-\mu_k\Delta{t}}\,\big)^2\,\cdot
\end{equation*}
Since the mean value theorem yields:
$1-e^{-\mu_k\Delta{t}}\leq\,\mu_k\,\Delta{t}$, the above
inequality takes the form
\begin{equation}\label{corv8}
\tfrac{1}{(\Delta{t})^2}\,\Psi_{\ssy {\widehat N}(t)}^k\leq
\,6\,\tfrac{1-e^{-2\mu_k\,\Delta{t}}}{\mu_k}\,\cdot
\end{equation}
\par
Combining \eqref{corv6}, \eqref{corv7}, \eqref{siriza4} and \eqref{corv8} we obtain
\begin{equation}\label{corv9}
\begin{split}
\Theta_{\ssy B}^2(t)\leq&\,C\,\left[\Delta{t}+\sum_{k=\kappa+1}^{\infty}\lambda_k^2\,
\tfrac{1-e^{-2\,\mu_k\,\Delta{t}}}{\mu_k}\right]\\
\leq&\,C\,\left[\Delta{t}+\sum_{k=1}^{\infty}
\tfrac{1-e^{-c_0\,\lambda_k^4\,\Delta{t}}}{\lambda_k^2}\right],
\end{split}
\end{equation}
with $c_0=\frac{2\,(1+2\kappa)}{(\kappa+1)^2}$. To get a convergence estimate we have to
exploit the way the series depends on $\Delta{t}$ in the above
relation:
\begin{equation}\label{corv10}
\begin{split}
\sum_{k=1}^{\infty}
\tfrac{1-e^{-c_0\,\lambda_k^4\,\Delta{t}}}{\lambda_k^2}
%
&\leq\, \tfrac{1-e^{-c_0\,\pi^4\,\Delta{t}}}{\pi^2}
+\int_1^{\infty} \tfrac{1-e^{-c_0\,x^4\,\pi^4\,\Delta{t}}}
{x^2\,\pi^2}\,dx\\
&\leq\,C\,\left[\,\big(\,1-e^{-c_0\,\pi^4\,\Delta{t}}\,\big)
+\Delta{t}\,\int_1^{\infty}
x^2\,e^{-c_0\,x^4\,\pi^4\,\Delta{t}}\,dx\,\right]\\
&\leq\,C\,\left[\,\Delta{t} +(\Delta{t})^{\frac{1}{4}}\,
\int_0^{\infty} y^2\,e^{-2y^4}\,dy\,\right]\\
&\leq\,C\,\left[\,(\Delta{t})^{\frac{3}{4}}
+1\,\right]\,(\Delta{t})^{\frac{1}{4}}.\\
\end{split}
\end{equation}
Using the bounds \eqref{corv9} and \eqref{corv10} we conclude that
\begin{equation}\label{corv12}
\Theta_{\ssy B}(t)\leq \,C\,\left[\,(\Delta{t})^{\frac{3}{4}}
+1\,\right]^{\half}\,\Delta{t}^{\frac{1}{8}}.
\end{equation}
\par
The error bound \eqref{ModelError} follows by observing that
$\Theta(0)=0$ and combining the bounds \eqref{corv1},
\eqref{corv5}, \eqref{corv12} and \eqref{SR_BOUND}.
\end{proof}
%
%
%
%
%
%
\section{Time-Discrete Approximations}\label{SECTION3}
The Backward Euler time-stepping method for problem
\eqref{AC2} specifies an approximation ${\widehat U}^m$ of
${\widehat u}(\tau_m,\cdot)$ starting by setting
\begin{equation}\label{BackE1}
{\widehat U}^0:=0,
\end{equation}
and then, for $m=1,\dots,M$, by finding ${\widehat U}^m\in {\bfdot
H}^4(D)$ such that
\begin{equation}\label{BackE2}
{\widehat U}^m-{\widehat U}^{m-1}+\Delta\tau\,\Lambda_{\ssy
B}{\widehat U}^m=\int_{\ssy\Delta_m}\partial_x{\widehat
W}\,ds\quad\text{\rm a.s.}.
\end{equation}
The method is well-defined when the differential operator
$Q_{\ssy B,\Delta\tau}:=I+\Delta\tau\,\Lambda_{\ssy B}:{\bfdot H}^4(D)\rightarrow L^2(D)$
is invertible. It is easily seen that $Q_{\ssy B,\Delta\tau}$ is invertible when
$1+\Delta\tau\,\lambda_k^2\,(\lambda_k^2-\mu)\not=0$
for $k\in{\mathbb N}$, or equivalently when: $\kappa=1$ or $\kappa\ge2$ and 
$\Delta\tau\,\max\limits_{1\leq{k}\leq{\kappa-1}}\lambda_{k}^2 \,(\mu-\lambda_k^2)\not=1$.
If $\kappa\ge2$, then it is easily seen that $\max\limits_{1\leq{k}\leq{\kappa-1}}\lambda_{k}^2 \,(\mu-\lambda_k^2)\leq\frac{\mu^2}{4}$, so the condition $\Delta\tau\,\frac{\mu^2}{4}<1$ is a sufficient
condition for the invertibility of $Q_{\ssy B,\Delta\tau}$.
\subsection{The Deterministic Case}
The Backward Euler time-discrete approximations of the solution $w$ to
the deterministic problem \eqref{Det_Parab} are defined as follows: first
we set
\begin{equation}\label{BEDet1}
W^0:=w_0,
\end{equation}
and then, for $m=1,\dots,M$, we find $W^m\in{\bfdot H}^4(D)$
such that
\begin{equation}\label{BEDet2}
W^m-W^{m-1}+\Delta\tau\,\Lambda_{\ssy B}W^m=0.
\end{equation}
Obviously, the Backward Euler  time-discrete approximations are well-defined
when $Q_{\ssy B,\Delta\tau}$ is invertible. Our next step, is to derive an error
estimate in a discrete in time $L_t^2(L_x^2)$
norm, taking into account that, in constrast to the case $\mu=0$ considered in \cite{KZ2010},
the operator $\Lambda_{\ssy B}$ is not always invertible.  
%
%
%
\begin{proposition}\label{DetPropo1}
Let $(W^m)_{m=0}^{\ssy M}$ be the Backward Euler time-discrete
approximations of the solution $w$ of the problem
\eqref{Det_Parab} defined in \eqref{BEDet1}--\eqref{BEDet2}. Also,
we assume that $\kappa=1$, or $\kappa\ge2$ and
$\Delta\tau\,\mu^2<\frac{1}{4}$.
Then, there exists a constant $C>0$, independent of $\Delta\tau$,
such that
\begin{equation}\label{Ydaspis900}
\Bigg(\,\sum_{m=1}^{\ssy M}\Delta\tau\,
\|W^m-w(\tau_m,\cdot)\|_{\ssy 0,D}^2 \,\Bigg)^{\frac{1}{2}}
\leq\,C\,(\Delta\tau)^{\theta} \,\|w_0\|_{\ssy{\bfdot
H}^{4\theta-2}},\quad\forall\,w_0\in{\bfdot H}^2(D),\quad\forall\,\theta\in[0,1].
\end{equation}
\end{proposition}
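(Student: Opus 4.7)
The plan is to diagonalize both $\mathcal{S}(\tau_m)$ and the backward Euler iteration simultaneously in the eigenbasis $(\varepsilon_k)_{k=1}^{\infty}$ of $-\partial^{2}$. Since $\Lambda_{\ssy B}\varepsilon_{k}=\mu_{k}\,\varepsilon_{k}$ with $\mu_{k}:=\lambda_{k}^{2}(\lambda_{k}^{2}-\mu)$ (as in the proof of Theorem~\ref{BIG_Qewrhma1}), writing $w_{0}=\sum_{k}w_{0,k}\,\varepsilon_{k}$ one gets $W^{m}=\sum_{k}w_{0,k}\,r(\Delta\tau\,\mu_{k})^{m}\,\varepsilon_{k}$ with $r(z):=(1+z)^{-1}$, while $w(\tau_{m})=\sum_{k}w_{0,k}\,e^{-\mu_{k}\tau_{m}}\,\varepsilon_{k}$. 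The hypothesis on $\Delta\tau$ (empty when $\kappa=1$, or $\Delta\tau\,\mu^{2}<\tfrac14$ when $\kappa\ge 2$) ensures $1+\Delta\tau\,\mu_{k}\neq 0$ for every $k$, so $W^{m}$ is well-defined and Parseval gives
\[
\sum_{m=1}^{M}\Delta\tau\,\|W^{m}-w(\tau_{m})\|_{\ssy 0,D}^{2}
=\sum_{k=1}^{\infty}w_{0,k}^{2}\,S_{k},\quad
S_{k}:=\sum_{m=1}^{M}\Delta\tau\,\bigl(r(\Delta\tau\,\mu_{k})^{m}-e^{-\mu_{k}\tau_{m}}\bigr)^{2}.
\]
Because $\|w_{0}\|_{\ssy\bfdot H^{4\theta-2}}^{2}=\sum_{k}\lambda_{k}^{8\theta-4}\,w_{0,k}^{2}$, it suffices to show $S_{k}\le C\,\Delta\tau^{2\theta}\,\lambda_{k}^{8\theta-4}$ uniformly in $k\in\nset$ and $\theta\in[0,1]$.

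For the modes $k\ge\kappa$ one has $\mu_{k}>0$, and I intend to establish the two endpoint bounds
$S_{k}\le C/\mu_{k}$ (stability, $\theta=0$) and $S_{k}\le C\,\Delta\tau^{2}\,\mu_{k}$ (consistency, $\theta=1$), then interpolate via $\min(a,b)\le a^{1-\theta}b^{\theta}$ to obtain $S_{k}\le C\,\Delta\tau^{2\theta}\,\mu_{k}^{2\theta-1}$. The first follows from $r(z)\ge e^{-z}$ for $z\ge 0$ (a restatement of $e^{z}\ge 1+z$) and summing the geometric series $\sum_{m}r(z)^{2m}=(z(2+z))^{-1}$ evaluated at $z=\Delta\tau\,\mu_{k}$. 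For the second, I split on $z_{k}:=\Delta\tau\,\mu_{k}$: when $z_{k}\ge 1$ the first endpoint already gives $S_{k}\le C/\mu_{k}\le C\,\Delta\tau^{2}\,\mu_{k}$; when $z_{k}\le 1$, the sharper consistency estimate $|r(z)^{m}-e^{-mz}|\le C\,mz^{2}\,r(z)^{m}$, derived from $z-\log(1+z)\in[0,z^{2}/2]$ and $|e^{\beta}-1|\le\beta e^{\beta}$ with $\beta:=m[z-\log(1+z)]$, combined with $r(z)\le e^{-z/2}$ on $[0,1]$ and the identity $\sum_{m}m^{2}e^{-mz}\le C z^{-3}$, yields $S_{k}\le C\,\Delta\tau\,z_{k}=C\,\Delta\tau^{2}\,\mu_{k}$. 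Since $\mu_{k}\ge\tfrac12\lambda_{k}^{4}$ for $k$ large and $\mu_{k}$ is bounded below by a positive constant on the remaining finitely many indices with $\mu_{k}>0$, we get $\mu_{k}^{2\theta-1}\le C\lambda_{k}^{8\theta-4}$ for all $\theta\in[0,1]$.

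The main obstacle is the finitely many bad modes $k\in\{1,\dots,\kappa-1\}$ arising when $\kappa\ge 2$, for which $\mu_{k}\le 0$, both $r(z_{k})^{m}$ and $e^{-\mu_{k}\tau_{m}}$ exceed $1$, and the interpolation bound above is meaningless. Here the hypothesis $\Delta\tau\,\mu^{2}<\tfrac14$ is crucial: from $|\mu_{k}|=\lambda_{k}^{2}(\mu-\lambda_{k}^{2})\le\mu^{2}/4$ (maximized at $\lambda_{k}^{2}=\mu/2$) one obtains $|z_{k}|<\tfrac{1}{16}$, so that $r(z_{k})^{m}\le(1-|z_{k}|)^{-m}\le e^{2T|\mu_{k}|}$ and $e^{-\mu_{k}\tau_{m}}\le e^{T|\mu_{k}|}$, both bounded uniformly in $m\le M$ by a constant depending only on $T$ and $\mu$. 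Then the telescoping identity
\[
r(z)^{m}-e^{-mz}=[r(z)-e^{-z}]\sum_{j=0}^{m-1}r(z)^{m-1-j}e^{-jz}
\]
together with the Taylor bound $|r(z)-e^{-z}|\le Cz^{2}$ for $|z|\le\tfrac{1}{16}$ gives $|\rho_{k}^{m}|\le C\,m\,\Delta\tau^{2}\mu_{k}^{2}\le C\,\Delta\tau\,\mu_{k}^{2}$, whence $S_{k}\le C\,\Delta\tau^{2}$. Since $\lambda_{k}$ is bounded above and below on these finitely many indices, $\lambda_{k}^{8\theta-4}$ is bounded uniformly on $\theta\in[0,1]$, and (assuming $\Delta\tau\le 1$) $\Delta\tau^{2}\le\Delta\tau^{2\theta}$, so the desired bound $S_{k}\le C\,\Delta\tau^{2\theta}\lambda_{k}^{8\theta-4}$ holds for these modes as well. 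Summing over $k$ yields \eqref{Ydaspis900}.
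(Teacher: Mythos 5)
Your proof is correct, but it follows a genuinely different route from the paper's. You diagonalize everything in the eigenbasis $(\varepsilon_k)$, reduce the claim to the scalar rational-approximation estimates $S_k\leq C\min\{\mu_k^{-1},\,\Delta\tau^2\mu_k\}$, interpolate \emph{per mode} via $\min(a,b)\leq a^{1-\theta}b^{\theta}$, and treat the finitely many non-dissipative modes ($\mu_k\leq 0$) by hand using $|\mu_k|\leq\mu^2/4$ — which is exactly where the hypothesis $\Delta\tau\,\mu^2<\tfrac14$ enters for you. The paper instead runs an energy argument: it introduces the always-invertible shifted operator ${\widetilde T}_{\ssy B}=({\Lambda}_{\ssy B}+\mu^2 I)^{-1}$ and the inner product ${\widetilde\gamma}_{\ssy B}$, absorbs the term $\Delta\tau\,\mu^2\,{\widetilde\gamma}_{\ssy B}(E^m,E^m)$ by a discrete Gronwall/induction step (this is where $\Delta\tau\,\mu^2<\tfrac14$ enters there), invokes the parabolic regularity bounds \eqref{Reggo3}, and only then interpolates between the $\theta=0$ and $\theta=1$ endpoint estimates in the ${\bfdot H}^s$ scale. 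Your version is more elementary and self-contained — the intermediate $\theta$ is handled by an explicit scalar inequality rather than an appeal to interpolation of the data spaces — and it makes the role of each hypothesis transparent; its drawback is that it is wedded to the explicit eigenbasis of the constant-coefficient one-dimensional operator, so it does not transfer to the finite element analogue (Theorem~\ref{Aygo_Kokora}), where the paper reuses the same energy template with ${\widetilde T}_{\ssy B,h}$. One loose end you may close explicitly: the parenthetical assumption $\Delta\tau\leq 1$ is only needed when bad modes exist, i.e.\ $\kappa\geq 2$, and there it is automatic since $\mu\geq\pi^2$ and $\Delta\tau\,\mu^2<\tfrac14$ force $\Delta\tau<(4\pi^4)^{-1}$.
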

%
%
%
%
%
%
%
%
\begin{proof}
The estimate \eqref{Ydaspis900}
will be established by interpolation, after proving it for $\theta=1$ and $\theta=0$.
\par
Let $w_0\in{\bfdot H}^2(D)$. According to the discussion in the
begining of this section, when $\kappa=1$ or $\kappa\ge2$
and $\Delta\tau\,\mu^2<\frac{1}{4}$, the existence and uniqueness of the
time-discrete approximations $(W^m)_{m=0}^{\ssy M}$ is secured. We omit the
case $\kappa=1$ since then the operator $\Lambda_{\ssy B}$ is invertible and the
proof of \eqref{Ydaspis900} follows moving along the lines of the proof of
Proposition~4.1 in \cite{KZ2010}, or
alternatively moving along the lines of the proof below using the operator $T_{\ssy B}$
instead of ${\widetilde T}_{\ssy B}$. Here, we will proceed with the proof of \eqref{Ydaspis900}
under the assumption $\Delta\tau\,\mu^2<\frac{1}{4}$, without using somewhere a possible
invertibilty of $\Lambda_{\ssy B}$. In the sequel,  we will use the symbol $C$ to denote a generic
constant that is independent of $\Delta{t}$ and may changes value from one line to the other. 
\par
Let $E^m(\cdot):=w(\tau_m,\cdot)-V^m(\cdot)$
for $m=0,\dots,M$ and $\sigma_{m}:=\,\int_{\ssy\Delta_m}
\left[\,w(\tau_m,\cdot)-w(\tau,\cdot)\,\right]\,d\tau$ for $m=1,\dots,M$.
Then, combining \eqref{Det_Parab} and \eqref{BEDet2}, we conclude
that
\begin{equation}\label{Axliopitas1}
{\widetilde T}_{\ssy B}(E^m-E^{m-1})
+\Delta\tau\,E^m=\Delta\tau\,\mu^2\,{\widetilde T}_{\ssy B}E^m+\left(\sigma_m
-\mu^2\,{\widetilde T}_{\ssy B}\sigma_m\right),
\quad m=1,\dots,M.\\
\end{equation}
Now, take the $L^2(D)-$inner product with $E^m$ of both sides of \eqref{Axliopitas1},
to obtain
\begin{equation}\label{axiografo_1}
\begin{split}
{\widetilde\gamma}_{\ssy B}(E^m
-E^{m-1},{E}^m)_{\ssy
0,D}+\Delta\tau\,\|E^m\|_{\ssy 0,D}^2=&\,
\Delta\tau\,\mu^2\,{\widetilde\gamma}_{\ssy B}(E^m,E^m)\\
&+(\sigma_m-\mu^2\,{\widetilde T}_{\ssy B}\sigma_m,E^m)_{\ssy
0,D},\quad m=1,\dots,M.\\
\end{split}
\end{equation}
%
%
%
Using \eqref{innerproduct}, \eqref{axiografo_1} and \eqref{ElBihar2}, we arrive at
\begin{equation}\label{axiografo_2}
\begin{split}
{\widetilde\gamma}_{\ssy B}({E}^m,{E}^m)
-{\widetilde\gamma}_{\ssy B}({E}^{m-1},{E}^{m-1})
+\Delta\tau\,\|{E}^m\|_{\ssy
0,D}^2\leq&\,2\,\Delta\tau\,\mu^2\,{\widetilde\gamma}_{\ssy B}(E^m,E^m)\\
&+C\,\Delta\tau^{-1}
\,\|\sigma_m\|_{\ssy 0,D}^2,\quad m=1,\dots,M.\\
\end{split}
\end{equation}
Since $2\,\Delta\tau\,\mu^2<1$, \eqref{axiografo_2} yields
\begin{equation*}\label{kill_bill1}
{\widetilde\gamma}_{\ssy B}(E^m,E^m)\leq\tfrac{1}{1-2\,\mu^2\,\Delta\tau}\,\left[\,
{\widetilde\gamma}_{\ssy B}(E^{m-1},E^{m-1})
+C\,\Delta\tau^{-1}\,\|\sigma_m\|_{\ssy 0,D}^2\right],
\quad m=1,\dots,M.
\end{equation*}
Then, we apply a simple induction argument and use that  $E^0=0$ 
and $4\,\Delta\tau\,\mu^2<1$, to obtain
\begin{equation}\label{kill_bill1}
\begin{split}
{\widetilde\gamma}_{\ssy B}(E^{m},E^{m})
\leq&\,C\,\Delta\tau^{-1}\,\sum_{\ell=1}^{m}\|\sigma_{\ell}\|_{\ssy 0,D}^2
\,\tfrac{1}{(1-2\,\Delta\tau\,\mu^2)^{m+1-\ell}}\\
\leq&\,C\,e^{4T\mu^2}
\,\Delta\tau^{-1}\,\sum_{\ell=1}^{m}\|\sigma_{\ell}\|_{\ssy 0,D}^2,
\quad m=1,\dots,M.\\
\end{split}
\end{equation}
Next, we use the Cauchy-Schwarz inequality to bound $\sigma_{m}$ as
follows:
\begin{equation}\label{Ydaspis902}
\begin{split}
\|\sigma_m\|_{\ssy 0,D}^2
%
&\leq\,C\,\int_{\ssy D}\left(\int_{\ssy\Delta_m}
\!\!\int_{\ssy\Delta_m}|\partial_{\tau}w(s,x)|\,ds
d\tau\right)^2\,dx\\
&\leq\,C\,(\Delta\tau)^3\,\int_{\ssy\Delta_m}
\|\partial_{\tau}w(s,\cdot)\|_{\ssy 0,D}^2\,ds,
\quad m=1,\dots,M.\\
\end{split}
\end{equation}
Thus, \eqref{Ydaspis902} and \eqref{kill_bill1} yield
\begin{equation}\label{kill_bill2}
{\widetilde\gamma}_{\ssy B}(E^{m},E^{m})
\leq\,C\,(\Delta\tau)^{2}\,\int_0^{\tau_m}\|\partial_{\tau}w(s,\cdot)\|_{\ssy 0,D}^2\;ds,
\quad m=1,\dots,M.
\end{equation}
Combining \eqref{axiografo_2}, \eqref{kill_bill2} and \eqref{Ydaspis902},
we have
\begin{equation}\label{kill_bill3}
\begin{split}
{\widetilde\gamma}_{\ssy B}({E}^m,{E}^m)
-{\widetilde\gamma}_{\ssy B}({E}^{m-1},{E}^{m-1})
+\Delta\tau\,\|{E}^m\|_{\ssy 0,D}^2\leq&\,C\,(\Delta\tau)^{2}
\,\int_{\ssy\Delta_m}
\|\partial_{\tau}w(s,\cdot)\|_{\ssy 0,D}^2\;ds\\
&+C\,(\Delta\tau)^3\,\int_0^{\tau_m}
\|\partial_{\tau}w(s,\cdot)\|_{\ssy 0,D}^2\;ds\\
\end{split}
\end{equation}
for $m=1,\dots,M$.
Summing with respect to $m$ from 1 up to $M$ and using the fact
that $E^0=0$, \eqref{kill_bill3} yields
\begin{equation}\label{axiografo_6}
{\widetilde\gamma}_{\ssy B}(E^{\ssy M},E^{\ssy M})
+\sum_{m=1}^{\ssy M}\Delta\tau\,\|E^m\|_{\ssy
0,D}^2\leq\,C\,(\Delta\tau)^{2}\,\int_0^{\ssy T}
\|\partial_{\tau}w(s,\cdot)\|_{\ssy 0,D}^2\;ds.
\end{equation}
Finally, use \eqref{axiografo_6}  and
\eqref{Reggo3} (with $\beta=0$, $\ell=1$, $p=0$) to obtain
\begin{equation}\label{axiografo_5}
\left(\,\sum_{m=1}^{\ssy M}\Delta\tau\,\,\|E^m\|_{\ssy
0,D}^2\,\right)^{\frac{1}{2}}
\leq\,C\,\Delta\tau\,\|w_0\|_{\ssy{\bfdot H}^2},\\
\end{equation}
which establishes \eqref{Ydaspis900} for $\theta=1$.
\par
First, we observe that \eqref{BEDet2} is written equivalently as
\begin{equation*}
{\widetilde T}_{\ssy B}(W^m-W^{m-1})+\Delta\tau\,W^m
=\Delta\tau\,\mu^2\,{\widetilde T}_{\ssy B}W^m, 
\quad m=1,\dots,M,
\end{equation*}
from which, after taking  the $L^2(D)-$inner product with $W^m$, we obtain
\begin{equation}\label{eklogesII2}
{\widetilde\gamma}_{\ssy B}(W^m-W^{m-1},W^m)_{\ssy 0,D}
+\Delta\tau\,\|W^m\|_{\ssy 0,D}^2=\Delta\tau\,\mu^2
\,{\widetilde\gamma}_{\ssy B}(W^m\,W^m),\quad m=1,\dots,M.
\end{equation}
Then, we combine \eqref{innerproduct} and \eqref{eklogesII2} to have
\begin{equation}\label{axiografo_10}
(1-2\,\Delta\tau\,\mu^2)\,{\widetilde\gamma}_{\ssy B}(W^m,W^m)
+2\,\Delta\tau\,\|W^m\|_{\ssy 0,D}^2\leq\,{\widetilde\gamma}_{\ssy
B}(W^{m-1},W^{m-1}),\quad m=1,\dots,M.
\end{equation}
Since  $4\,\mu^2\,\Delta\tau<1$, \eqref{axiografo_10} yields that
\begin{equation*}
\begin{split}
{\widetilde\gamma}_{\ssy B}(W^m,W^m)\leq&\,\tfrac{1}{1-2\,\mu^2\,\Delta\tau}
\,{\widetilde\gamma}_{\ssy B}(W^{m-1},W^{m-1})\\
\leq&\,e^{4\mu^2\Delta\tau}
\,{\widetilde\gamma}_{\ssy B}(W^{m-1},W^{m-1}),
\quad m=1,\dots,M,\\
\end{split}
\end{equation*}
from which, applying a simple induction argument, we conclude that
\begin{equation}\label{eklogesII3}
\max_{0\leq{m}\leq{\ssy M}}{\widetilde\gamma}_{\ssy B}(W^{m},W^{m})
\leq\,C\,{\widetilde\gamma}_{\ssy B}(w_0,w_0).
\end{equation}
Now, summing with respect to $m$ from $1$ up to $M$, and using \eqref{eklogesII3},
\eqref{axiografo_10} yields
\begin{equation}\label{axiografo_13}
\begin{split}
\sum_{m=1}^{\ssy M}\Delta\tau\,\|W^m\|_{\ssy
0,D}^2\leq&\,C\,({\widetilde T}_{\ssy B}w_0,w_0)_{\ssy 0,D}\\
\leq&\,\|w_0\|_{\ssy -2,D}\,\|{\widetilde T}_{\ssy B}w_0\|_{\ssy 2,D}.\\
\end{split}
\end{equation}
Thus, using \eqref{axiografo_13}, \eqref{ElBihar2} and
\eqref{minus_equiv}, we obtain
\begin{equation}\label{Ydaspis904}
\begin{split}
\left(\,\sum_{m=1}^{\ssy M}\Delta\tau\,\|W^m\|_{\ssy
0,D}^2\,\right)^{\frac{1}{2}}
&\leq\,C\,\|w_0\|_{\ssy -2,D}\\
&\leq\,C\,\|w_0\|_{\ssy{\bfdot H}^{-2}}.\\
\end{split}
\end{equation}
In addition we have
\begin{equation*}
\begin{split}
\sum_{m=1}^{\ssy M}\Delta\tau\,\|w(\tau_m,\cdot)\|_{\ssy 0,D}^2
&\leq\,\sum_{m=1}^{\ssy M}\int_{\ssy D}\,
\left(\int_{\ssy\Delta_m}\partial_{\tau}\big[(\tau-\tau_{m-1})
\,w^2(\tau,x)\big]\,d\tau\right)\,dx\\
&\leq\,\sum_{m=1}^{\ssy M}\int_{\ssy D}\,
\left(\int_{\ssy\Delta_m}\big[w^2(\tau,x)
+2\,(\tau-\tau_{m-1})\,w_{\tau}(\tau,x)\,w(\tau,x)\big]
\,d\tau\right)\,dx\\
&\leq\,\sum_{m=1}^{\ssy M}\int_{\ssy\Delta_m}\,
\Big[\,2\,\|w(\tau,\cdot)\|_{\ssy 0,D}^2
+(\tau-\tau_{m-1})^2\,
\|w_{\tau}(\tau,\cdot)\|_{\ssy 0,D}^2\Big]
\,d\tau\\
&\leq\,2\,\int_0^{\ssy T}\Big[\|w(\tau,\cdot)\|_{\ssy 0,D}^2
+\tau^2\,\|w_{\tau}(\tau,\cdot)\|_{\ssy 0,D}^2\Big]\,d\tau,
\end{split}
\end{equation*}
which, along with \eqref{Reggo3} (taking $(\beta,\ell,p)=(0,0,0)$
and $(\beta,\ell,p)=(2,1,0)$) and \eqref{minus_equiv},
yields
\begin{equation}\label{Ydaspis906}
\left(\,\sum_{m=1}^{\ssy M}\Delta\tau\,\|w(\tau_m,\cdot)\|_{\ssy
0,D}^2\,\right)^{\frac{1}{2}}\leq\,C\,\|w_0\|_{\ssy {\bfdot
H}^{-2}}.
\end{equation}
Thus, the estimate \eqref{Ydaspis900} for $\theta=0$ follows
easily combining \eqref{Ydaspis904} and \eqref{Ydaspis906}.
\end{proof}
%
%
%
%
\subsection{The Stochastic Case}
Next theorem combines the convergence result of
Proposition~\ref{DetPropo1} with a discrete Duhamel's principle in
order to prove a discrete in time $L^{\infty}_t(L^2_{\ssy
P}(L^2_x))$ convergence estimate for the time discrete
approximations of ${\widehat u}$ (cf. \cite{KZ2010}, \cite{YubinY05}).
%
%
%
%
\begin{theorem}\label{TimeDiscreteErr1}
Let ${\widehat u}$ be the solution of \eqref{AC2} and $({\widehat
U}^m)_{m=0}^{\ssy M}$ be the  time-discrete approximations
defined by \eqref{BackE1}--\eqref{BackE2}. Also, we assume that
$\kappa=1$, or $\kappa\ge2$ and $\Delta\tau\,\mu^2<\frac{1}{4}$.
Then, there exists a constant $C>0$, independent of $\Delta{t}$,
$\Delta{x}$ and $\Delta\tau$, such that
\begin{equation}\label{ElPasso}
\max_{1\leq m \leq {\ssy M}} \left({\mathbb E}\left[ \|{\widehat
U}^m-{\widehat u}(\tau_m,\cdot)\|_{\ssy
0,D}^2\right]\right)^{\half} \leq\,C
\,\omega_1(\Delta\tau,\epsilon)\,
\,\Delta{\tau}^{\frac{1}{8}-\epsilon},
\quad\forall\,\epsilon\in(0,\tfrac{1}{8}],
\end{equation}
where $\omega_1(\Delta\tau,\epsilon):=\epsilon^{-\frac{1}{2}}
+(\Delta\tau)^{\epsilon}(1+(\Delta{\tau})^{\frac{7}{4}}
+(\Delta{\tau})^{\frac{3}{4}})^{\half}$.
\end{theorem}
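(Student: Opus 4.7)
The plan is to combine a discrete Duhamel principle for $\widehat U^m$ with an It\^o isometry argument based on Lemma~\ref{Lhmma1}, thereby reducing the stochastic error to deterministic sums that can be estimated using Proposition~\ref{DetPropo1} and the semigroup regularity~\eqref{Reggo3}. Under the stated hypotheses $Q_{\ssy B,\Delta\tau}:=I+\Delta\tau\,\Lambda_{\ssy B}$ is invertible (as noted right after~\eqref{BackE2}); setting $\mathcal{S}_{\Delta\tau}^n:=Q_{\ssy B,\Delta\tau}^{-n}$, iterating~\eqref{BackE2} and applying Duhamel's formula to~\eqref{AC2} give
\begin{equation*}
\widehat U^m=\sum_{k=1}^{m}\mathcal{S}_{\Delta\tau}^{m-k+1}\!\int_{\Delta_k}\partial_x\widehat W\,ds,\qquad \widehat u(\tau_m)=\sum_{k=1}^{m}\int_{\Delta_k}\mathcal{S}(\tau_m-s)\,\partial_x\widehat W(s)\,ds.
\end{equation*}
Adding and subtracting $\mathcal{S}(\tau_{m-k+1})$ inside each subinterval yields the splitting $\widehat u(\tau_m)-\widehat U^m=E_1^m+E_2^m$ with
\begin{gather*}
E_1^m=\sum_{k=1}^m\int_{\Delta_k}\bigl[\mathcal{S}(\tau_m-s)-\mathcal{S}(\tau_{m-k+1})\bigr]\,\partial_x\widehat W(s)\,ds,\\
E_2^m=\sum_{k=1}^m\bigl[\mathcal{S}(\tau_{m-k+1})-\mathcal{S}_{\Delta\tau}^{m-k+1}\bigr]\!\int_{\Delta_k}\partial_x\widehat W\,ds.
\end{gather*}

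Next I would expand each $E_i^m$ in the eigenbasis $(\varepsilon_j)_{j=1}^{\infty}$. Using $\partial_y\varepsilon_j=\lambda_j\varphi_j$ together with the fact that both $\mathcal{S}(t)$ and $\mathcal{S}_{\Delta\tau}^n$ are diagonal in this basis, one obtains $E_i^m=-\sum_{j\ge 1}\lambda_j\varepsilon_j\int_0^{\tau_m}\rho_j^{i,m}(s)\,(\widehat W(s),\varphi_j)_{\ssy 0,D}\,ds$ where the piecewise constant scalars are given, for $s\in\Delta_k$, by $\rho_j^{1,m}(s)=e^{-\mu_j(\tau_m-s)}-e^{-\mu_j\tau_{m-k+1}}$ and $\rho_j^{2,m}(s)=e^{-\mu_j\tau_{m-k+1}}-(1+\Delta\tau\mu_j)^{-(m-k+1)}$, with $\mu_j:=\lambda_j^2(\lambda_j^2-\mu)$. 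Applying Lemma~\ref{Lhmma1} with the deterministic function $g(s,y)=\rho_j^{i,m}(s)\varphi_j(y)$ converts each integral against $\widehat W$ into a stochastic integral against $dW$, and then the It\^o isometry~\eqref{Ito_Isom} combined with the $L^2$-contractivity~\eqref{L2L2_Bound} of $\widehat\Pi$ and Parseval in $(\varepsilon_j)$ produces the key inequality
\begin{equation*}
\mathbb{E}[\|E_i^m\|_{\ssy 0,D}^2]\leq\sum_{j=1}^{\infty}\lambda_j^2\int_0^{\tau_m}|\rho_j^{i,m}(s)|^2\,ds,
\end{equation*}
with constants independent of $\Delta t$ and $\Delta x$ as required.

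For $E_2^m$, Proposition~\ref{DetPropo1} applied to $w_0=\varepsilon_j$ at parameter $\theta\in[0,1]$ gives $\int_0^{\tau_m}|\rho_j^{2,m}|^2\,ds\leq C\Delta\tau^{2\theta}\lambda_j^{8\theta-4}$, whence $\mathbb{E}[\|E_2^m\|_{\ssy 0,D}^2]\leq C\Delta\tau^{2\theta}\sum_{j\geq 1}\lambda_j^{8\theta-2}$; the series converges by~\eqref{SR_BOUND} whenever $8\theta<1$, and choosing $\theta$ slightly below $\tfrac18$ (with the deficit tuned through~\eqref{SR_BOUND}) delivers exactly the rate $\epsilon^{-1/2}\Delta\tau^{1/8-\epsilon}$. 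For $E_1^m$ the intra-subinterval oscillation $e^{-\mu_j(\tau_m-s)}-e^{-\mu_j\tau_{m-k+1}}$ is controlled by a direct calculation that parallels the treatment of $\Theta_{\ssy B}(t)$ in the proof of Theorem~\ref{BIG_Qewrhma1}, producing the same $\Delta\tau^{1/8}$ rate together with the $(1+\Delta\tau^{3/4})^{1/2}$-type factor that appears in $\omega_1$.

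The main obstacle is the failure of $L^2$-coercivity of $\Lambda_{\ssy B}$ when $\kappa\geq 2$: the finitely many low modes with $\mu_j<0$ do not provide any geometric contraction of $\mathcal{S}_{\Delta\tau}^n$, so the deterministic bound on the scalars $\rho_j^{2,m}$ cannot be obtained from $\Lambda_{\ssy B}$ directly but must instead be routed through the shifted coercive operator ${\widetilde\Lambda}_{\ssy B}=\Lambda_{\ssy B}+\mu^2I$, exactly as in the proof of Proposition~\ref{DetPropo1}; the hypothesis $\Delta\tau\,\mu^2<\tfrac14$ is used precisely to absorb the resulting $\mu^2$-perturbation at the cost of a harmless factor $e^{4T\mu^2}$. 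Low and high modes must then be handled separately, in the same spirit as the splittings $k\leq\kappa$ versus $k\geq\kappa+1$ already used in Theorem~\ref{BIG_Qewrhma1}. No genuinely new idea beyond those in Section~\ref{SECTION3} is required, but the bookkeeping in keeping all constants independent of $\Delta t,\Delta x,\Delta\tau$ is delicate.
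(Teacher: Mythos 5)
Your proposal is correct and follows essentially the same route as the paper's proof: the same discrete Duhamel representation, the same splitting (your $E_2^m$ and $E_1^m$ are the paper's ${\mathcal B}_1^m$ and ${\mathcal B}_2^m$), the same reduction via Lemma~\ref{Lhmma1}, the It{\^o} isometry and \eqref{L2L2_Bound} to eigenmode sums, the same use of Proposition~\ref{DetPropo1} with $w_0=\varepsilon_k$ and \eqref{SR_BOUND} for the rational-semigroup part, and the same $\Theta_{\ssy B}$-style low/high-mode estimate for the intra-interval part.
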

%
%
%
%
%
%
%
%
%
\begin{proof}
Let $I:L^2(D)\to L^2(D)$ be the identity operator,
$\Lambda:L^2(D)\to{\bfdot H}^4(D)$ be the inverse elliptic
operator $\Lambda:=(I+\Delta{\tau}\,\Lambda_{\ssy B})^{-1}$
which has Green function $G_{\ssy\Lambda}(x,y)=
\sum_{k=1}^{\infty}\frac{\varepsilon_k(x)
\,\varepsilon_k(y)}{1+\Delta\tau\,\lambda_k^2(\lambda_k^2-\mu)}$,
i.e. $\Lambda{f}(x)=\int_{\ssy D}G_{\ssy\Lambda}(x,y)f(y)\,dy$ for
$x\in{\overline D}$ and $f\in L^2(D)$.
Also, we set $G_{\ssy\Phi}(x,y):=-\partial_yG_{\ssy\Lambda}(x,y)=
-\sum_{k=1}^{\infty}\frac{\varepsilon_k(x)
\,\varepsilon_k'(y)}{1+\Delta\tau(\lambda_k^4-\mu\,\lambda_k^2)}$,
and define $\Phi:L^2(D)\to{\bfdot H}^4(D)$ by $\Phi
f(x):=\int_{\ssy D}G_{\ssy\Phi}(x,y)\,f(y)\;dy$ for $f\in L^2(D)$.
%
%
Also, for $m\in\nset$, we denote by $G_{\ssy \Lambda\Phi,m}$ the
Green function of the operator $\Lambda^{m-1}\Phi$.  In the sequel,  we will
use the symbol $C$ to denote a generic constant that is independent of $\Delta{t}$, $\Delta\tau$
and $\Delta{x}$, and may changes value from one line to the other. 
\par
Using \eqref{BackE2} and a simple induction argument, we conclude
that
$${\widehat U}^m=\sum_{j=1}^{\ssy m} \int_{\ssy\Delta_j}
\Lambda^{m-j}\Phi {\widehat W}(\tau,\cdot)\,d\tau,\quad
m=1,\dots,M,$$
which is written, equivalently, as follows:
\begin{equation}\label{Anaparastash1}
{\widehat U}^m(x)
=\int_0^{\tau_m}\!\!\!\int_{\ssy D}
\,{\widehat {\mathcal K}}_m(\tau;x,y)\,{\widehat W}(\tau,y)\,dyd\tau,
\quad\forall\,x\in{\overline D}, \ken m=1,\dots,M,
\end{equation}
where
${\widehat {\mathcal K}}_m(\tau;x,y):=\sum_{j=1}^m{\mathcal
X}_{\ssy\Delta_j}(\tau) \,G_{\ssy\Lambda\Phi,m-j+1}(x,y),
\quad\forall\,\tau\in[0,T],\ken\forall\,x,y\in D$.
\par
Let $m\in\{1,\dots,M\}$ and ${\mathcal E}^m:={\mathbb E}\big[
\|{\widehat U}^m -{\widehat u}(\tau_m,\cdot)\|_{\ssy 0,D}^2\big]$.
First, we use \eqref{Anaparastash1}, \eqref{HatUform},
\eqref{WNEQ2}, \eqref{Ito_Isom}, \eqref{HSxar} and
\eqref{L2L2_Bound}, to obtain
\begin{equation*}
\begin{split}
{\mathcal E}^m&={\mathbb E}\left[\,\int_{\ssy D}\left(
\int_0^{\ssy T} \!\!\!\int_{\ssy D} {\mathcal
X}_{(0,\tau_m)}(\tau) \,\big[{\widehat{\mathcal
K}}_m(\tau;x,y)-\Psi(\tau_m-\tau;x,y)\big]\,
{\widehat W}(\tau,y)\,dyd\tau\right)^2\,dx\right]\\
%
%
%
%
%
%
&\leq\int_0^{\tau_m}\left(\int_{\ssy D}\!\int_{\ssy D}
\,\big[{\widehat{\mathcal K}}_m(\tau;x,y)
-\Psi(\tau_m-\tau;x,y)\big]^2\,dydx\right)\,d\tau\\
%
%
&\leq\sum_{\ell=1}^{m}\int_{\ssy\Delta_{\ell}} \left(\int_{\ssy
D}\!\int_{\ssy D} \,\big[G_{\ssy\Lambda\Phi,m-\ell+1}(x,y)
-\Psi(\tau_m-\tau;x,y)\big]^2\,dydx\right)\,d\tau.\\
%
%
\end{split}
\end{equation*}
Now, we introduce the splitting
\begin{equation}\label{mainerrorF}
\sqrt{{\mathcal E}^m}\leq\,\sqrt{{\mathcal B}_1^m}+\sqrt{{\mathcal
B}_2^m},
\end{equation}
where
\begin{equation*}
\begin{split}
{\mathcal B}_1^m&:=\sum_{\ell=1}^{m}\int_{\ssy\Delta_{\ell}}
\left(\int_{\ssy D}\!\int_{\ssy D}
\,\big[G_{\ssy\Lambda\Phi,m-\ell+1}(x,y)
-\Psi(\tau_m-\tau_{\ell-1};x,y)\big]^2\,dydx\right)\,d\tau,\\
{\mathcal B}_2^m&:=\sum_{\ell=1}^{m}\int_{\ssy\Delta_{\ell}}
\left(\int_{\ssy D}\!\int_{\ssy D}
\,\big[\Psi(\tau_m-\tau_{\ell-1};x,y)
-\Psi(\tau_m-\tau;x,y)\big]^2\,dydx\right)\,d\tau.\\
\end{split}
\end{equation*}
\par
By the definition of the
Hilbert-Schmidt norm, we have
\begin{equation*}
\begin{split}
%
%
{\mathcal B}_1^m&\leq\Delta\tau\,\sum_{\ell=1}^m
\,\sum_{k=1}^{\infty} \int_{\ssy D}\left(\int_{\ssy D}
\,\big[G_{\ssy\Lambda\Phi,m-\ell+1}(x,y)\varphi_k(y)\;dy
-\int_{\ssy D}\Psi(\tau_m-\tau_{\ell-1};x,y)\varphi_k(y)\;dy
\right)^2\;dx\\
%
%
&\leq\sum_{k=1}^{\infty}\left(\, \sum_{\ell=1}^m\,\Delta\tau\,
\|\Lambda^{m-\ell}\Phi\varphi_k -{\mathcal
S}(\tau_m-\tau_{\ell-1})\varphi_k'\|^2_{\ssy 0,D}
\,\right)\\
&\leq\sum_{k=1}^{\infty}\left(\, \sum_{\ell=1}^m\,\Delta\tau\,
\|\Lambda^{m-\ell+1}\varphi_k' -{\mathcal
S}(\tau_m-\tau_{\ell-1})\varphi_k'\|^2_{\ssy 0,D}
\,\right)\\
%
%
%
%
&\leq\sum_{k=1}^{\infty}\,\lambda_k^2 \,\left(\, \sum_{\ell=1}^m
\,\Delta\tau\,\|\Lambda^{\ell}\varepsilon_k -{\mathcal
S}(\tau_{\ell})\varepsilon_k
\|^2_{\ssy 0,D}\,\right).\\
\end{split}
\end{equation*}
Let $\theta\in[0,\frac{1}{8})$. Using the deterministic error
estimate \eqref{Ydaspis900} and \eqref{SR_BOUND}, we obtain
\begin{equation}\label{trabajito}
\begin{split}
\sqrt{{\mathcal B}_1^m}&\leq\,C\,(\Delta{\tau})^{\theta}
\,\left(\,\sum_{k=1}^{\infty}\lambda_k^2\,\|\varepsilon_k\|^2_{\ssy
{\bfdot H}^{4\theta-2}}\,\right)^{\frac{1}{2}}\\
&\leq\,C\,(\Delta{\tau})^{\theta}\,\left(\,\sum_{k=1}^{\infty}
\tfrac{1}{\lambda_k^{1+8\,(\frac{1}{8}-\theta)}}\,\right)^{\frac{1}{2}}\\
&\leq\,C\,\tfrac{1}{\frac{1}{8}-\theta}\,(\Delta{\tau})^{\theta}.
\end{split}
\end{equation}
\par
 Using, again, the definition of the
Hilbert-Schmidt norm we have
\begin{equation}\label{Ydaspis950}
\begin{split}
{\mathcal B}_2^m=&\,\sum_{k=1}^{\infty}
\sum_{\ell=1}^m\int_{\ssy\Delta_{\ell}} \|{\mathcal
S}(\tau_m-\tau_{\ell-1})\varphi_k' -{\mathcal
S}(\tau_m-\tau)\varphi_k'\|^2_{\ssy 0,D} \,d\tau\\
=&\,\sum_{k=1}^{\infty}\lambda_k^2
\sum_{\ell=1}^m\int_{\ssy\Delta_{\ell}} \|{\mathcal
S}(\tau_m-\tau_{\ell-1})\varepsilon_k -{\mathcal
S}(\tau_m-\tau)\varepsilon_k\|^2_{\ssy 0,D} \,d\tau
\end{split}
\end{equation}
Observing that
${\mathcal
S}(t)\varepsilon_k=e^{-\lambda_k^2(\lambda_k^2-\mu)t}\,\varepsilon_{k}$
for $t\ge 0$, \eqref{Ydaspis950} yields
\begin{equation}\label{pipinis1}
\begin{split}
{\mathcal B}_2^m=&\,\sum_{k=1}^{\infty}\lambda_k^2\,
\sum_{\ell=1}^m\int_{\ssy\Delta_{\ell}} \left(\int_{\ssy D}\left[
e^{-(\lambda_k^4-\mu\,\lambda_k^2)(\tau_m-\tau_{\ell-1})}
-e^{-(\lambda_k^4-\mu\,\lambda_k^2)(\tau_m-\tau)}\right]^2
\varepsilon_k^2(x)\,dx\right)
\,d\tau\\
=&\,\sum_{k=1}^{\infty}\lambda_k^2\,
\sum_{\ell=1}^m\int_{\ssy\Delta_{\ell}}
e^{-2(\lambda_k^4-\mu\,\lambda_k^2)(\tau_m-\tau)} \left[1-
e^{-(\lambda_k^4-\mu\,\lambda_k^2)(\tau-\tau_{\ell-1})}\right]^2\,d\tau\\
\leq&\,{\mathcal B}_{2,1}^{m}+{\mathcal B}_{2,2}^m,
\end{split}
\end{equation}
where
\begin{equation*}
\begin{split}
{\mathcal B}_{2,1}^m:=&\,
\sum_{k=1}^{\kappa}\lambda_k^2\,
\sum_{\ell=1}^m\int_{\ssy\Delta_{\ell}}
e^{-2\lambda_k^2(\lambda_k^2-\mu)(\tau_m-\tau)} \left[1-
e^{-(\lambda_k^4-\mu\,\lambda_k^2)(\tau-\tau_{\ell-1})}\right]^2\,d\tau,\\
{\mathcal B}_{2,2}^m:=&\,
\sum_{k=\kappa+1}^{\infty}\lambda_k^2\,
\sum_{\ell=1}^m\int_{\ssy\Delta_{\ell}}
e^{-2\lambda_k^2(\lambda_k^2-\mu)(\tau_m-\tau)} \left[1-
e^{-(\lambda_k^4-\mu\,\lambda_k^2)(\tau-\tau_{\ell-1})}\right]^2\,d\tau.\\
\end{split}
\end{equation*}
First, we estimate ${\mathcal B}_{2,1}^m$ and ${\mathcal B}_{2,2}^m$ as follows
\begin{equation}\label{pipinis2}
\begin{split}
{\mathcal B}_{2,2}^m\leq&\,\sum_{k=\kappa+1}^{\infty}\lambda_k^2\,
\big(1-e^{-\lambda_k^2(\lambda_k^2-\mu)\,\Delta\tau }\,\big)^2
\left[\, \int_0^{\tau_m}
e^{-2(\lambda_k^4-\mu\,\lambda_k^2)(\tau_m-\tau)}
\,d\tau\,\right]\\
\leq&\,\tfrac{1}{2}\,\sum_{k=\kappa+1}^{\infty}
\tfrac{1-e^{-2\lambda_k^2(\lambda_k^2-\mu)
\,\Delta\tau}}{\lambda_k^2-\mu}\\
\leq&\,\tfrac{(\kappa+1)^2}{2(1+2\kappa)}\,\sum_{k=\kappa+1}^{\infty}
\tfrac{1-e^{-2\lambda_k^2(\lambda_k^2-\mu)
\,\Delta\tau}}{\lambda_k^2}\\
\leq&\,C\,\sum_{k=1}^{\infty}
\tfrac{1-e^{-c_0\,\lambda_k^4\,\Delta\tau}}{\lambda_k^2}
\end{split}
\end{equation}
with $c_0=\frac{2(1+2\kappa)}{(\kappa+1)^2}$, and
\begin{equation}\label{pipinis3}
\begin{split}
{\mathcal B}_{2,1}^m\leq&\,C\,\sum_{k=1}^{\kappa}
\sum_{\ell=1}^m\int_{\ssy\Delta_{\ell}} \left[1-
e^{-(\lambda_k^4-\mu\,\lambda_k^2)(\tau-\tau_{\ell-1})}\right]^2\,d\tau\\
\leq&\,C\,\sum_{k=1}^{\kappa}
\sum_{\ell=1}^m\int_{\ssy\Delta_{\ell}} \left[
(\lambda_k^4-\mu\,\lambda_k^2)(\tau-\tau_{\ell-1})\right]^2\,d\tau\\
\leq&\,C\,(\Delta\tau)^2.\\
\end{split}
\end{equation}
Finally, we  combine \eqref{pipinis1}, \eqref{pipinis2}, \eqref{pipinis3} and
\eqref{corv10}, to obtain
\begin{equation}\label{Ydaspis952}
\sqrt{{\mathcal
B}_2^m}\leq\,C\,\,\left(1+(\Delta{\tau})^{\frac{3}{4}}
+(\Delta{\tau})^{\frac{7}{4}}\right)^{\frac{1}{2}}
\,\,(\Delta\tau)^{\frac{1}{8}}.
\end{equation}
\par
The estimate \eqref{ElPasso} follows by \eqref{mainerrorF},
\eqref{trabajito} and \eqref{Ydaspis952}. $\mybox$
\end{proof}
%
%
%
%
%
%
\section{Convergence of the Fully-Discrete Approximations}\label{SECTION44}
To get an error estimate for the fully-discrete approximations of
${\widehat u}$ defined by \eqref{FullDE1}--\eqref{FullDE2}, we
proceed by comparing them with their time-discrete approximations
defined by \eqref{BackE1}--\eqref{BackE2} and using a discrete Duhamel
principle (cf. \cite{KZ2010}, \cite{YubinY05}).
\subsection{The Deterministic Case}
The Backward Euler finite element approximations
of the solution to \eqref{Det_Parab} are defined as follows:
first, set
\begin{equation}\label{DetFD1}
W_h^0:=P_hw_0,
\end{equation}
and then, for $m=1,\dots,M$, find $W_h^m\in M_h^r$ such that
\begin{equation}\label{DetFD2}
W_h^m-W_h^{m-1}+\Delta\tau\,\Lambda_{\ssy B,h}W_h^m=0,
\end{equation}
which is possible when $\mu^2\,\Delta\tau<4$.
%
\par
Next, we derive a discrete in time $L^2_t(L_x^2)$ estimate for the
error approximating the Backward Euler time-discrete approximations of the
solution to \eqref{Det_Parab} defined in \eqref{BEDet1}-\eqref{BEDet2},
by the Backward Euler finite element approximations defined in
\eqref{DetFD1}-\eqref{DetFD2}.
The main difference with the case $\mu=0$ which has been considered
in \cite{KZ2010}, is that, our assumption \eqref{mu_condition} on $\mu$,
can not ensure the coerciveness  of the discrete elliptic operator
$\Lambda_{\ssy B,h}$.
%
%
\begin{theorem}\label{Aygo_Kokora}
Let $r=2$ or $3$, $w$ be the solution to the problem
\eqref{Det_Parab}, $(W^m)_{m=0}^{\ssy M}$ be the time-discrete
approxi\-mations of $w$ defined in \eqref{BEDet1}-\eqref{BEDet2},
and $(W_h^m)_{m=0}^{\ssy M}\subset M_h^r$ be the fully-discrete
approximations of $w$ defined in \eqref{DetFD1}-\eqref{DetFD2}.
Also, we assume that $\mu^2\,\Delta\tau<\frac{1}{4}$.
If $w_0\in {\bfdot H}^2(D)$, then, there exists a nonnegative
constant ${\widehat c}_1$, independent of $h$ and
$\Delta{\tau}$, such that
\begin{equation}\label{tiger_river1}
\left(\,\sum_{m=1}^{\ssy M}\Delta{\tau}\,\|W^m-W_h^m\|^2_{\ssy
0,D} \,\right)^{\frac{1}{2}}\leq \,{\widehat
c}_1\,\,h^{\ell_{\star}(r)\,\theta} \,\,\|w_0\|_{\ssy {\bfdot
H}^{\xi_{\star}(r,\theta)}},\quad\forall\,\theta\in[0,1],
\end{equation}
where
\begin{equation}\label{Basilico_4}
\ell_{\star}(r):=\left\{ \aligned
&2\quad\text{\rm if}\ken r=2\\
&4\quad\text{\rm if}\ken r=3\\
\endaligned
\right.
\quad\text{and}\quad
\xi_{\star}(r,\theta):=(r+1)\,\theta-2.
\end{equation}
\end{theorem}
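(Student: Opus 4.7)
The plan is to mimic Proposition~\ref{DetPropo1} and establish \eqref{tiger_river1} at the two endpoints $\theta=0$ and $\theta=1$, recovering the intermediate rates by a standard real-interpolation argument applied to the linear map $w_0\mapsto(W^m-W_h^m)_{0\le m\le M}$. As in the proof of Proposition~\ref{DetPropo1}, the key device for bypassing the loss of coercivity of $\Lambda_{\ssy B,h}$ (when $\mu>0$) is to reformulate both schemes through the coercive auxiliary operator ${\widetilde\Lambda}_{\ssy B,h}$ (cf.\ \eqref{mu_coercivity}) via $\Lambda_{\ssy B,h}={\widetilde\Lambda}_{\ssy B,h}-\mu^2\,I$, and to absorb the residual $\mu^2$-term using $\mu^2\Delta\tau<\tfrac14$.

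For the endpoint $\theta=1$ I would apply ${\widetilde T}_{\ssy B}$ to \eqref{BEDet2} and ${\widetilde T}_{\ssy B,h}$ to \eqref{DetFD2}, which yields
\begin{gather*}
{\widetilde T}_{\ssy B}(W^m-W^{m-1})+\Delta\tau\,W^m-\Delta\tau\,\mu^2\,{\widetilde T}_{\ssy B}W^m=0,\\
{\widetilde T}_{\ssy B,h}(W_h^m-W_h^{m-1})+\Delta\tau\,W_h^m-\Delta\tau\,\mu^2\,{\widetilde T}_{\ssy B,h}W_h^m=0.
\end{gather*}
Setting $e^m:=W^m-W_h^m$ and subtracting, the error equation reads
$${\widetilde T}_{\ssy B,h}(e^m-e^{m-1})+\Delta\tau\,e^m-\Delta\tau\,\mu^2\,{\widetilde T}_{\ssy B,h}e^m=\Delta\tau\,\mu^2\,({\widetilde T}_{\ssy B}-{\widetilde T}_{\ssy B,h})W^m-({\widetilde T}_{\ssy B}-{\widetilde T}_{\ssy B,h})(W^m-W^{m-1}).$$
Testing this with $e^m$, applying \eqref{innerproduct} to the leftmost term, using Cauchy--Schwarz on the right and absorbing $2\Delta\tau\,\mu^2\,{\widetilde\gamma}_{\ssy B,h}(e^m,e^m)$ via $4\mu^2\Delta\tau<1$, a discrete Gronwall/induction argument in the spirit of \eqref{kill_bill1}--\eqref{axiografo_6} produces
$$\sum_{m=1}^M\Delta\tau\,\|e^m\|_{\ssy 0,D}^2\leq\,C\,{\widetilde\gamma}_{\ssy B,h}(e^0,e^0)+C\,\Delta\tau^{-2}\sum_{m=1}^M\Bigl(\|({\widetilde T}_{\ssy B}-{\widetilde T}_{\ssy B,h})W^m\|_{\ssy 0,D}^2+\|({\widetilde T}_{\ssy B}-{\widetilde T}_{\ssy B,h})(W^m-W^{m-1})\|_{\ssy 0,D}^2\Bigr).$$
The $L^2$-error estimates of Proposition~\ref{H2CASE} (with norm $\|\cdot\|_{\ssy 0,D}$ on the right when $r=3$ and $\|\cdot\|_{\ssy -1,D}$ when $r=2$) bound every consistency term by $C\,h^{2\ell_\star(r)}$ times an appropriate norm of $W^m$ or of $(W^m-W^{m-1})/\Delta\tau=-\Lambda_{\ssy B}W^m$, and a discrete analogue of the smoothing bound \eqref{Reggo3} controls the resulting sums by $C\,\|w_0\|_{\ssy{\bfdot H}^{r-1}}^2$. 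The initial-data contribution ${\widetilde\gamma}_{\ssy B,h}(e^0,e^0)={\widetilde\gamma}_{\ssy B,h}(w_0-P_hw_0,w_0-P_hw_0)$ is handled via \eqref{PanwFragma1} and standard negative-norm estimates for $P_h$, giving $C\,h^{2\ell_\star(r)}\|w_0\|_{\ssy{\bfdot H}^{r-1}}^2$. Altogether this delivers the $\theta=1$ rate $h^{\ell_\star(r)}\|w_0\|_{\ssy{\bfdot H}^{\xi_\star(r,1)}}$.

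For the endpoint $\theta=0$ the stability bound for $(W^m)$ is already \eqref{Ydaspis904}. For $(W_h^m)$ I would rerun the same argument at the discrete level: apply ${\widetilde T}_{\ssy B,h}$ to \eqref{DetFD2}, test against $W_h^m$, invoke \eqref{innerproduct} together with \eqref{mu_coercivity}, absorb the $\mu^2\Delta\tau$ term via $4\mu^2\Delta\tau<1$, and sum in $m$ to reach $\sum_{m}\Delta\tau\,\|W_h^m\|_{\ssy 0,D}^2\leq\,C\,{\widetilde\gamma}_{\ssy B,h}(P_hw_0,P_hw_0)$; the right-hand side is then bounded by $C\,\|w_0\|_{\ssy{\bfdot H}^{-2}}^2$ using \eqref{PanwFragma1} and the $L^2$-stability of $P_h$. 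Combined with \eqref{Ydaspis904} this gives the $\theta=0$ bound. The intermediate values $\theta\in(0,1)$ then follow by real interpolation between $({\bfdot H}^{-2}(D),{\bfdot H}^{r-1}(D))_\theta={\bfdot H}^{\xi_\star(r,\theta)}(D)$, producing exactly the exponent $\ell_\star(r)\theta$ on $h$.

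The main obstacle is the indefiniteness of $\Lambda_{\ssy B,h}$, which is circumvented by working throughout with the coercive surrogate ${\widetilde\Lambda}_{\ssy B,h}$ and exploiting $\mu^2\Delta\tau<\tfrac14$. A secondary technical difficulty is the correct bookkeeping of the negative Sobolev norms required in Proposition~\ref{H2CASE} for the two consistency terms---$W^m$ and $\Lambda_{\ssy B}W^m$, which differ by four derivatives in smoothness---so that the final outcome matches the minimal regularity $w_0\in{\bfdot H}^{\xi_\star(r,1)}={\bfdot H}^{r-1}$ accessible at $\theta=1$ under the blanket hypothesis $w_0\in{\bfdot H}^2(D)$.
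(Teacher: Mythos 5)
Your proposal follows essentially the same route as the paper: endpoint estimates at $\theta=0$ and $\theta=1$ followed by interpolation, the reformulation through the coercive surrogate ${\widetilde\Lambda}_{\ssy B,h}$ with the $\mu^2$-terms absorbed via $\mu^2\Delta\tau<\tfrac14$, consistency terms controlled by Proposition~\ref{H2CASE}, discrete smoothing bounds for $\sum_m\Delta\tau\,\|\partial^4W^m\|^2_{\ssy 0,D}$ (resp.\ $\|\partial^3W^m\|^2_{\ssy 0,D}$), and the $\theta=0$ stability via ${\widetilde\gamma}_{\ssy B,h}$ and \eqref{PanwFragma1}. A cosmetic difference: the paper applies ${\widetilde T}_{\ssy B,h}$ to \emph{both} schemes, which collapses the defect into the single term $\Delta\tau\,({\widetilde T}_{\ssy B}-{\widetilde T}_{\ssy B,h}){\widetilde\Lambda}_{\ssy B}W^m$ and makes ${\widetilde T}_{\ssy B,h}E^0=0$, so the initial-data contribution vanishes identically and your detour through negative-norm estimates for $P_h$ is unnecessary (though harmless); your two-term version is equivalent because $(W^m-W^{m-1})/\Delta\tau=-\Lambda_{\ssy B}W^m$.

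There is, however, one point you must repair: the displayed Gr\"onwall conclusion with the uniform prefactor $\Delta\tau^{-2}$ does not follow from the argument you describe and, taken literally, loses the theorem. Testing your error equation with $e^m$, using \eqref{innerproduct} and Young's inequality, the term $\Delta\tau\,\mu^2\,(({\widetilde T}_{\ssy B}-{\widetilde T}_{\ssy B,h})W^m,e^m)_{\ssy 0,D}$ contributes with weight $\Delta\tau$ and the term $(({\widetilde T}_{\ssy B}-{\widetilde T}_{\ssy B,h})(W^m-W^{m-1}),e^m)_{\ssy 0,D}$ with weight $\Delta\tau^{-1}$, so the correct bound is
\begin{equation*}
\sum_{m=1}^{\ssy M}\Delta\tau\,\|e^m\|_{\ssy 0,D}^2
\leq\,C\,{\widetilde\gamma}_{\ssy B,h}(e^0,e^0)
+C\sum_{m=1}^{\ssy M}\Big(\Delta\tau\,\|({\widetilde T}_{\ssy B}-{\widetilde T}_{\ssy B,h})W^m\|_{\ssy 0,D}^2
+\Delta\tau^{-1}\,\|({\widetilde T}_{\ssy B}-{\widetilde T}_{\ssy B,h})(W^m-W^{m-1})\|_{\ssy 0,D}^2\Big).
\end{equation*}
Since $\|({\widetilde T}_{\ssy B}-{\widetilde T}_{\ssy B,h})(W^m-W^{m-1})\|_{\ssy 0,D}^2=\Delta\tau^2\,\|({\widetilde T}_{\ssy B}-{\widetilde T}_{\ssy B,h})\Lambda_{\ssy B}W^m\|_{\ssy 0,D}^2$, the weight $\Delta\tau^{-1}$ yields exactly the $\Delta\tau$-weighted sum $\sum_m\Delta\tau\,\|({\widetilde T}_{\ssy B}-{\widetilde T}_{\ssy B,h})\Lambda_{\ssy B}W^m\|_{\ssy 0,D}^2$ needed to invoke the smoothing bounds, i.e.\ the analogue of \eqref{citah3}; with your stated $\Delta\tau^{-2}$ an uncontrolled extra factor $\Delta\tau^{-1}$ survives and the rate is destroyed. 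With the corrected weights the rest of your outline goes through and coincides with the paper's proof.
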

%
%
%
%
%
%
%
\begin{proof}
The error estimate \eqref{tiger_river1} follows by interpolation,
after showing that holds for $\theta=0$ and $\theta=1$.  In the sequel, 
we will use the symbol $C$ to denote a generic constant that is
independent of $\Delta\tau$ and $h$, and may changes value from one line to the other. 
%
%
%
%
%
%
\par
Let $E^m:=W_h^m-W^m$ for $m=0,\dots,M$. First, use \eqref{DetFD2} and
\eqref{BEDet2} to obtain
\begin{gather}
W_h^m-W_h^{m-1}+\Delta{\tau}\,{\widetilde\Lambda}_{\ssy B,h}W^m_h
=\Delta{\tau}\,\mu^2\,W_h^m,\label{Darm_1a}\\
W^m-W^{m-1}+\Delta{\tau}\,{\widetilde\Lambda}_{\ssy B}W^m
=\Delta{\tau}\,\mu^2\,W^m\label{Darm_1b}
\end{gather}
for $m=1,\dots,M$. Then, combine \eqref{Darm_1a} and
\eqref{Darm_1b}, to get the following error equation
\begin{equation}\label{error_equation_tilde}
\begin{split}
{\widetilde T}_{\ssy B,h}(E^m-E^{m-1})+\Delta\tau\,E^m
=&\,\Delta\tau\,\mu^2\,{\widetilde T}_{\ssy B,h}E^m
-\Delta{\tau}\,({\widetilde T}_{\ssy B}-{\widetilde T}_{\ssy
B,h}){\widetilde\Lambda}_{\ssy B}W^m,
\quad m=1,\dots,M.\\
\end{split}
\end{equation}
Taking the $L^2(D)-$inner product with $E^m$ of both sides of
\eqref{error_equation_tilde}, it follows that
\begin{equation*}
\begin{split}
{\widetilde\gamma}_{\ssy B,h}(E^m-E^{m-1},E^m)
+\Delta\tau\,\|E^m\|_{\ssy 0,D}^2
=&\,\Delta\tau\,\mu^2\,{\widetilde\gamma}_{\ssy B,h}(E^m,E^m)\\
&-\Delta{\tau}\,(({\widetilde T}_{\ssy B}
-{\widetilde T}_{\ssy B,h}){\widetilde\Lambda}_{\ssy B}W^m,E^m)_{\ssy 0,D},
\quad m=1,\dots,M,
\end{split}
\end{equation*}
from which, after using \eqref{innerproduct}, we conclude that
\begin{equation}\label{Parlamento2}
\begin{split}
{\widetilde\gamma}_{\ssy B,h}(E^m,E^m)
+\Delta\tau\,\|E^m\|_{\ssy 0,D}^2
\leq&\,{\widetilde\gamma}_{\ssy B,h}(E^{m-1},E^{m-1})
+2\,\Delta\tau\,\mu^2\,{\widetilde\gamma}_{\ssy B,h}(E^m,E^m)\\
&+\Delta\tau\,\|({\widetilde T}_{\ssy B}-{\widetilde T}_{\ssy B,h})
{\widetilde\Lambda}_{\ssy B}W^m\|_{\ssy 0,D}^2,
\quad m=1,\dots,M.\\
\end{split}
\end{equation}
Since $2\Delta\tau\mu^2<1$, \eqref{Parlamento2} yields
\begin{equation}\label{Parlamento3}
{\widetilde\gamma}_{\ssy B,h}(E^m,E^m)
\leq\,\tfrac{1}{1-2\,\Delta\tau\,\mu^2}\,\left[\,
{\widetilde\gamma}_{\ssy B,h}(E^{m-1},E^{m-1})+\Delta\tau\,\|({\widetilde T}_{\ssy B}
-{\widetilde T}_{\ssy B,h}){\widetilde\Lambda}_{\ssy B}W^m\|_{\ssy 0,D}^2\right]
\end{equation}
for $m=1,\dots,M$. Applying a simple induction argument based on \eqref{Parlamento2}
and then using that $4\Delta\tau\mu^2<1$,  we get
\begin{equation}\label{Parlamento4}
\max_{0\leq{m}\leq{\ssy M}}{\widetilde\gamma}_{\ssy B,h}(E^m,E^m)
\leq\,C\,\left[\,{\widetilde\gamma}_{\ssy B,h}(E^0,E^0)
+\Delta\tau\sum_{\ell=1}^{\ssy M}
\|({\widetilde T}_{\ssy B}-{\widetilde T}_{\ssy B,h})
{\widetilde\Lambda}_{\ssy B}W^{\ell}\|_{\ssy 0,D}^2\,\right].
\end{equation}
Summing with respect to $m$ from $1$ up to $M$, using \eqref{Parlamento4}
and observing that ${\widetilde T}_{\ssy B,h}E^0=0$,  \eqref{Parlamento2} 
gives
\begin{equation}\label{citah3}
\sum_{m=1}^{\ssy M}\Delta\tau\,\|E^m\|_{\ssy 0,D}^2\leq\,C
\,\sum_{m=1}^{\ssy M}\Delta{\tau}\,\|({\widetilde T}_{\ssy B}
-{\widetilde T}_{\ssy B,h}){\widetilde\Lambda}_{\ssy B}W^m\|_{\ssy 0,D}^2.
\end{equation}
\par
Let $r=3$. Then, by \eqref{ARA2}, \eqref{citah3} and the
Poincar{\'e}-Friedrich inequality, we obtain
\begin{equation}\label{citah_karim_4}
\begin{split}
\left(\,\sum_{m=1}^{\ssy M}\Delta\tau\,\|E^m\|_{\ssy
0,D}^2\,\right)^{\frac{1}{2}}\leq&\,C\,h^4\,\left(
\,\sum_{m=1}^{\ssy M}\Delta\tau\, \|{\widetilde\Lambda}_{\ssy
B}W^m\|_{\ssy 0,D}^2\right)^{\frac{1}{2}}\\
\leq&\,C\,h^4\,\left[\,\sum_{m=1}^{\ssy M}\Delta\tau\,\left(\,
\|\partial_x^4W^m\|_{\ssy 0,D}^2 +\|\partial_x^2W^m\|_{\ssy 0,D}^2
+\|\partial_x^1W^m\|_{\ssy 0,D}^2\,\right)\,\right]^{\frac{1}{2}}.\\
\end{split}
\end{equation}
Taking the $L^2(D)-$inner product of \eqref{BEDet2} with
$\partial^4W^m$ and then integrating by parts, we obtain
\begin{equation}\label{citah_ksa_5}
(\partial^2W^m-\partial^2W^{m-1},\partial^2W^m)_{\ssy 0,D}
+\Delta\tau\,\|\partial^4W^m\|_{\ssy
0,D}^2+\mu\,\Delta\tau\,(\partial^2W^m,\partial^4 W^m)_{\ssy
0,D}=0,\quad m=1,\dots,M.
\end{equation}
Using \eqref{innerproduct}, \eqref{citah_ksa_5} and
the Cauchy-Schwarz inequality we obtain
\begin{equation*}
\|\partial^2W^m\|_{\ssy
0,D}^2+2\,\Delta\tau\,\|\partial^4W^m\|_{\ssy 0,D}^2
\leq\,\|\partial^2W^{m-1}\|_{\ssy 0,D}^2
+2\,\mu\,\Delta\tau\,\|\partial^2W^{m-1}\|_{\ssy 0,D}
\,\|\partial^4W^m\|_{\ssy 0,D},\quad
m=1,\dots,M,
\end{equation*}
which, after using the geometric mean inequality, yields
\begin{equation}\label{citah_ksa_6}
\|\partial^2W^m\|_{\ssy
0,D}^2+\Delta\tau\,\|\partial^4W^m\|_{\ssy 0,D}^2
\leq\|\partial^2W^{m-1}\|_{\ssy 0,D}^2
+\Delta\tau\,\mu^2\,\|\partial^2W^m\|_{\ssy 0,D}^2,\quad
m=1,\dots,M.
\end{equation}
Since  $2\,\mu^2\,\Delta\tau<1$, from
\eqref{citah_ksa_6} follows that
\begin{equation*}
\begin{split} \|\partial^2W^m\|_{\ssy
0,D}^2\leq&\,\tfrac{1}{1-\mu^2\,\Delta\tau}
\,\|\partial^2W^{m-1}\|^2_{\ssy 0,D}\\
\leq&\,e^{2\mu^2\Delta\tau}
\,\|\partial^2W^{m-1}\|_{\ssy 0,D}^2,
\quad m=1,\dots,M,\\
\end{split}
\end{equation*}
from which, applying a simple induction argument, we conclude that
\begin{equation}\label{citah_ksa_7}
\max_{0\leq{m}\leq{\ssy M}}\|\partial^2W^m\|_{\ssy 0,D}^2
\leq\,C\,\|w_0\|_{\ssy 2,D}^2.
\end{equation}
Next, sum both side of \eqref{citah_ksa_6} with respect to $m$,
from $1$ up to $M$, and use \eqref{citah_ksa_7} to conclude that
\begin{equation}\label{citah_ksa_8}
\sum_{m=1}^{\ssy M}\Delta\tau\,\|\partial^4W^m\|_{\ssy 0,D}^2
\leq\,C\,\|w_0\|_{\ssy 2,D}^2.
\end{equation}
Taking the $L^2(D)-$inner product of \eqref{BEDet2} with
$\partial^2W^m$, and then integrating by parts, it follows that
\begin{equation}\label{citah_karim_9}
(\partial W^m-\partial W^{m-1},\partial W^m)_{\ssy 0,D}
+\Delta\tau\,\|\partial^3W^m\|_{\ssy
0,D}^2+\mu\,\Delta\tau\,(\partial W^m,\partial^3W^m)_{\ssy
0,D}=0,\quad m=1,\dots,M.
\end{equation}
Using \eqref{innerproduct},
\eqref{citah_karim_9}, the Cauchy-Schwarz inequality and
the geometric mean inequality, we obtain
\begin{equation*}\label{citah_karim_10}
\|\partial W^m\|_{\ssy
0,D}^2+\Delta\tau\,\|\partial^3W^m\|_{\ssy 0,D}^2
\leq\|\partial W^{m-1}\|_{\ssy 0,D}^2
+\Delta\tau\,\mu^2\,\|\partial W^m\|_{\ssy 0,D}^2,\quad
m=1,\dots,M.
\end{equation*}
Since $2\,\mu^2\,\Delta\tau< 1$,
proceeding as in obtaining \eqref{citah_ksa_7} and
\eqref{citah_ksa_8} from \eqref{citah_ksa_6}, we arrive at
\begin{equation}\label{citah_citah_10}
\max_{0\leq{m}\leq{\ssy M}}\|\partial W^m\|_{\ssy
0,D}^2+\sum_{m=1}^{\ssy M}\Delta\tau\,\|\partial^3W^m\|_{\ssy
0,D}^2 \leq\,C\,\|w_0\|_{\ssy 1,D}^2.
\end{equation}
Thus, combining \eqref{citah_karim_4}, 
\eqref{citah_ksa_8}, \eqref{citah_ksa_7},
\eqref{citah_citah_10}  and \eqref{H_equiv}, we obtain
\begin{equation}\label{final_karim_1}
\left(\,\sum_{m=1}^{\ssy M}\Delta\tau\,\|E^m\|_{\ssy
0,D}^2\,\right)^{\frac{1}{2}}\leq\,C\,h^4\,\|w_0\|_{\ssy {\bfdot
H}^2}.
\end{equation}
\par
Let $r=2$. Then, by \eqref{ARA2}, \eqref{citah3} and the
Poincar{\'e}-Friedrich inequality, we obtain
\begin{equation}\label{citah4}
\begin{split}
\left(\,\sum_{m=1}^{\ssy M}\Delta\tau\,\|{E}^m\|_{\ssy
0,D}^2\,\right)^{\frac{1}{2}}\leq&\,C\,h^2\,\left(
\,\sum_{m=1}^{\ssy M}\Delta\tau\, \|{\widetilde\Lambda}_{\ssy
B}W^m\|_{\ssy -1,D}^2\right)^{\frac{1}{2}}\\
\leq&\,C\,h^2\,\left[\,\sum_{m=1}^{\ssy M}\Delta\tau\,\left(\,
\|\partial^3W^m\|_{\ssy 0,D}^2
+\|\partial W^m\|_{\ssy 0,D}^2\,\right)\,\right]^{\frac{1}{2}}.\\
\end{split}
\end{equation}
Combining, now, \eqref{citah4}, \eqref{citah_citah_10} and
\eqref{H_equiv}, we obtain
\begin{equation}\label{final_karim_2}
\left(\,\sum_{m=1}^{\ssy M}\Delta\tau\,\|E^m\|_{\ssy
0,D}^2\,\right)^{\frac{1}{2}}
\leq\,C\,h^2\,\|w_0\|_{\ssy {\bfdot H}^1}.
\end{equation}
\par
Thus, relations \eqref{final_karim_1} and
\eqref{final_karim_2} yield \eqref{tiger_river1} and
\eqref{Basilico_4} for $\theta=1$.
\par
Since $\mu^2\,\Delta\tau<1$, using \eqref{Darm_1a}, we have
\begin{equation*}
{\widetilde T}_{{\ssy B},h}(W_h^m-W_h^{m-1})
+\Delta\tau\,W_h^m=\Delta\tau\,\mu^2\,{\widetilde T}_{\ssy B,h}W_h^m,
\quad m=1,\dots,M,
\end{equation*}
from which, after taking the $L^2(D)-$inner product with
$W_h^m$, we obtain
\begin{equation}\label{Harvard_1}
{\widetilde\gamma}_{\ssy B,h}(W_h^m-W_h^{m-1},W_h^m)_{\ssy
0,D}+\Delta\tau\,\|W^m_h\|_{\ssy 0,D}^2=\Delta\tau\,\mu^2\,
{\widetilde\gamma}_{\ssy B,h}(W_h^m,W_h^m),\quad m=1,\dots,M.
\end{equation}
Then we combine \eqref{Harvard_1} with \eqref{innerproduct} to have 
\begin{equation}\label{Yale_1}
(1-2\,\Delta\tau\,\mu^2)\,{\widetilde\gamma}_{\ssy B,h}(W_h^m,W_h^m)
+2\,\Delta\tau\,\|W_h^m\|_{\ssy 0,D}^2\leq\,{\widetilde\gamma}_{\ssy
B,h}(W_h^{m-1},W_h^{m-1}),\quad m=1,\dots,M.
\end{equation}
Since  $4\,\mu^2\,\Delta\tau<1$, \eqref{Yale_1} yields that
\begin{equation*}
\begin{split}
{\widetilde\gamma}_{\ssy B,h}(W_h^m,W_h^m)
\leq&\,\tfrac{1}{1-2\,\mu^2\,\Delta\tau}
\,{\widetilde\gamma}_{\ssy B,h}(W_h^{m-1},W_h^{m-1})\\
\leq&\,e^{4\mu^2\Delta\tau}
\,{\widetilde\gamma}_{\ssy B,h}(W_h^{m-1},W_h^{m-1}),
\quad m=1,\dots,M,\\
\end{split}
\end{equation*}
from which, applying a simple induction argument, we conclude that
\begin{equation}\label{Yale_2}
\max_{0\leq{m}\leq{\ssy M}}{\widetilde\gamma}_{\ssy B,h}(W_h^{m},W_h^{m})
\leq\,C\,{\widetilde\gamma}_{\ssy B,h}(W_h^0,W_h^0).
\end{equation}
Summing with respect to $m$ from 1 up to $M$, and using
\eqref{Yale_2}, \eqref{Yale_1} gives
\begin{equation}\label{Harvard_2}
\Delta\tau\,\sum_{m=1}^{\ssy
M}\|W_h^m\|_{\ssy 0,D}^2\leq\,C\,{\widetilde\gamma}_{\ssy
B,h}(W_h^0,W_h^0)_{\ssy 0,D}.
\end{equation}
Finally, using \eqref{Harvard_2}, \eqref{PanwFragma1} and \eqref{minus_equiv} we
obtain
\begin{equation}\label{Harvard_3}
\begin{split}
\sum_{m=1}^{\ssy M}\Delta\tau\,\|W_h^m\|_{\ssy
0,D}^2
\leq&\,C\,({\widetilde T}_{\ssy B,h}w_0,w_0)_{\ssy 0,D}\\
\leq&\,C\,\|w_0\|^2_{\ssy -2,D}\\
\leq&\,C\,\|w_0\|^2_{\ssy {\bfdot H}^{-2}}.\\
\end{split}
\end{equation}
Finally, combine \eqref{Harvard_3} with \eqref{Ydaspis904} to get
$$\left(\,\sum_{m=1}^{\ssy M}\Delta\tau\,\|W^m-W_h^m\|_{\ssy
0,D}^2\,\right)^{\frac{1}{2}}\leq\,C\,\|w_0\|_{\ssy {\bfdot
H}^{-2}},$$
which yields \eqref{tiger_river1} and \eqref{Basilico_4} for
$\theta=0$.
\end{proof}
%
%
%
%
%
\subsection{The Stochastic Case}
Our first step is to show the existence of a Green function for
the solution operator of a discrete elliptic problem.
%
%
\begin{lemma}\label{prasinolhmma}
Let $r=2$ or $3$, $\epsilon>0$ with $\mu^2\epsilon<4$, $f\in
L^2(D)$ and $\psi_h\in M_h^r$ such that
\begin{equation}\label{bohqos_pro}
\psi_h+\epsilon\,\Lambda_{\ssy B,h}\psi_h=P_hf.
\end{equation}
Then there exists a function $A_{\epsilon,h}\in H^2(D\times D)$
such that $A_{\epsilon,h}\left|_{\partial(D\times D)}\right.=0$
and
\begin{equation}\label{prasinogreen}
\psi_h(x)=\int_{\ssy D} A_{h,\epsilon}(x,y)\,f(y)\,dy
\quad\forall\,x\in{\overline D}
\end{equation}
and $A_{h,\epsilon}(x,y)=A_{h,\epsilon}(y,x)$ for $x,y\in
{\overline D}.$
\end{lemma}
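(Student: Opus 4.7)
The plan is to exploit the finite-dimensionality of $M_h^r$ and read off a Green function directly from the matrix inverse of the discrete operator $I+\epsilon\Lambda_{B,h}$ in an arbitrary basis of $M_h^r$.

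First I would establish that $I+\epsilon\Lambda_{B,h}\colon M_h^r\to M_h^r$ is invertible under the quantitative hypothesis $\mu^2\epsilon<4$. The variational form of the equation, tested against $\chi\in M_h^r$, involves the bilinear form
\begin{equation*}
a_\epsilon(\varphi,\chi):=(\varphi,\chi)_{0,D}+\epsilon(\partial^2\varphi,\partial^2\chi)_{0,D}+\epsilon\mu(\partial^2\varphi,\chi)_{0,D}.
\end{equation*}
Since $M_h^r\subset H_0^1(D)$, integration by parts converts $(\partial^2\chi,\chi)_{0,D}$ into $-\|\partial\chi\|_{0,D}^2$, and combining $\|\partial\chi\|_{0,D}^2\le\|\chi\|_{0,D}\|\partial^2\chi\|_{0,D}$ with Young's inequality yields
\begin{equation*}
a_\epsilon(\chi,\chi)\ge\left(1-\tfrac{\epsilon\mu^2}{4}\right)\|\chi\|_{0,D}^2,\quad\forall\chi\in M_h^r,
\end{equation*}
which is strictly positive for $\chi\neq 0$. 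Thus $a_\epsilon$ is symmetric and positive definite on $M_h^r$, and the operator $I+\epsilon\Lambda_{B,h}$ is invertible.

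Next, I would pick any basis $(\varphi_i)_{i=1}^N$ of $M_h^r$ (recall $M_h^r\subset H^2(D)\cap H_0^1(D)$) and expand $\psi_h=\sum_{i=1}^N c_i\varphi_i$. Testing \eqref{bohqos_pro} against each $\varphi_i$ produces the linear system $S\vec c=\vec g$ with
\begin{equation*}
S_{ij}:=(\varphi_j,\varphi_i)_{0,D}+\epsilon(\partial^2\varphi_j,\partial^2\varphi_i)_{0,D}-\epsilon\mu(\partial\varphi_j,\partial\varphi_i)_{0,D},\quad g_i:=(f,\varphi_i)_{0,D}.
\end{equation*}
By the coercivity above, $S$ is symmetric positive definite, so $S^{-1}$ exists and is symmetric. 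Defining
\begin{equation*}
A_{\epsilon,h}(x,y):=\sum_{i,j=1}^N (S^{-1})_{ij}\,\varphi_i(x)\,\varphi_j(y),
\end{equation*}
a direct substitution gives $\psi_h(x)=\int_D A_{\epsilon,h}(x,y)\,f(y)\,dy$, establishing \eqref{prasinogreen}. The symmetry $A_{\epsilon,h}(x,y)=A_{\epsilon,h}(y,x)$ is immediate from the symmetry of $S^{-1}$.

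Finally I would verify the two regularity/boundary properties. For $H^2(D\times D)$ membership, each tensor product $\varphi_i\otimes\varphi_j$ lies in $H^2(D)\otimes H^2(D)$, and by Fubini every mixed derivative $\partial_x^\alpha\partial_y^\beta(\varphi_i\otimes\varphi_j)$ with $\alpha+\beta\le 2$ is in $L^2(D\times D)$ with norm bounded by $\|\varphi_i\|_{2,D}\|\varphi_j\|_{2,D}$; since $A_{\epsilon,h}$ is a finite linear combination of such terms, it lies in $H^2(D\times D)$. For the boundary condition, at any point of $\partial(D\times D)$ one of $x,y$ lies in $\partial D$; since $\varphi_k\in H_0^1(D)$ vanishes there in the sense of trace, each summand $\varphi_i(x)\varphi_j(y)$ has zero trace on $\partial(D\times D)$, and hence so does $A_{\epsilon,h}$. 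The only substantive step is the coercivity inequality that forces the quantitative bound $\mu^2\epsilon<4$; the rest is a routine translation of finite-dimensional linear algebra into kernel form, with the symmetry of $S$ crucially relying on the integration-by-parts identity that uses $M_h^r\subset H_0^1(D)$.
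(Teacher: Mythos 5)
Your proof is correct and follows essentially the same route as the paper: both read off the kernel $A_{h,\epsilon}$ from finite-dimensional linear algebra as a finite sum of tensor products of basis functions, the paper via a basis that is simultaneously $L^2$-orthonormal and orthogonal for the bilinear form (so that $A_{h,\epsilon}(x,y)=\sum_j\chi_j(x)\chi_j(y)/\lambda_{\epsilon,h,j}$), you via an arbitrary basis and the symmetric inverse $S^{-1}$ of the system matrix --- the same object written in different coordinates. Your explicit coercivity bound $a_\epsilon(\chi,\chi)\ge\bigl(1-\tfrac{\epsilon\mu^2}{4}\bigr)\|\chi\|_{0,D}^2$ is a welcome addition, since the paper merely asserts the positivity of the $\lambda_{\epsilon,h,\ell}$ that the hypothesis $\mu^2\epsilon<4$ is meant to guarantee.
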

%
%
%
%
%
%
%
%
\begin{proof}
Let $\delta_{\epsilon,h}:M_h^r\times M_h^r\rightarrow\rset$ be the
inner product on $M_h^r$ given by
\begin{equation*}
\begin{split}
\delta_{\epsilon,h}(\phi,\chi):=&\,\epsilon\,(\Lambda_{\ssy
B,h}\phi,\chi)_{\ssy 0,D}+(\phi,\chi)_{\ssy
0,D}\\
=&\,\epsilon\,(\phi'',\chi'')_{\ssy 0,D}
+\epsilon\,\mu\,(\phi'',\chi)_{\ssy 0,D}
+(\phi,\chi)_{\ssy0,D},\quad\forall\,\phi,\chi\in M^r_h.\\
\end{split}
\end{equation*}
We can construct a basis $(\chi_j)_{j=1}^{n_h}$ of $M_h^r$ which
is $L^2(D)-$orthonormal, i.e., $(\chi_i,\chi_j)_{\ssy
0,D}=\delta_{ij}$ for $i,j=1,\dots,n_h$, and
$\delta_{\epsilon,h}-$orthogonal, i.e., there exist
$(\lambda_{\epsilon,h,\ell})_{\ell=1}^{\ssy
n_h}\subset(0,+\infty)$ such that
$\delta_{\epsilon,h}(\chi_i,\chi_j)
=\lambda_{\epsilon,h,i}\,\delta_{ij}$ for $i,j=1,\dots,n_h$ (see
Section 8.7 in \cite{Golub}).
Thus, there are $(\mu_j)_{j=1}^{\ssy n_h}\subset\rset$ such that
$\psi_h=\sum_{j=1}^{\ssy n_h}\mu_j\,\chi_j$, and
\eqref{bohqos_pro} is equivalent to $\mu_i
=\frac{1}{\lambda_{\epsilon,h,i}}\,(f,\chi_i)_{\ssy 0,D}$ for
$i=1,\dots,n_h$. Finally, we obtain \eqref{prasinogreen} with
$A_{h,\epsilon}(x,y)=\sum_{j=1}^{\ssy n_h}
\frac{\chi_j(x)\chi_j(y)}{\lambda_{\epsilon,h,j}}$.
\end{proof}
%
%
%
%
%
%
Our second step is to compare, in a discrete in time
$L^{\infty}_t(L^2_{\ssy P}(L^2_x))$ norm, the Backward Euler
time-discrete approximations of ${\widehat u}$ with the Backward
Euler finite element approximations of ${\widehat u}$.
%
%
%
%
%
%
\begin{proposition}\label{Tigrakis}
Let $r=2$ or $3$, ${\widehat u}$ be the solution of the problem
\eqref{AC2}, $({\widehat U}_h^m)_{m=0}^{\ssy M}$ be the Backward
Euler finite element approximations of ${\widehat u}$ defined in
\eqref{FullDE1}-\eqref{FullDE2}, and $({\widehat U}^m)_{m=0}^{\ssy
M}$ be the Backward Euler time-discrete approximations of
${\widehat u}$ defined in \eqref{BackE1}-\eqref{BackE2}.
Also, we assume that  $\mu^2\,\Delta\tau\leq \frac{1}{4}$.
Then, there exists a nonnegative constant ${\widehat c}_2$,
independent of $\Delta{x}$, $\Delta{t}$, $h$ and $\Delta\tau$,
such that
\begin{equation}\label{Lasso1}
\max_{0\leq{m}\leq {\ssy M}}\left({\mathbb E}\left[
\big\|{\widehat U}_h^m -{\widehat U}^m\big\|^2_{\ssy
0,D}\right]\right)^{\half} \leq\,{\widehat
c}_2\,\epsilon^{-\frac{1}{2}} \,\,\,h^{\nu(r)-\epsilon},
\quad\forall\,\epsilon\in(0,\nu(r)],
\end{equation}
where
\begin{equation}\label{Nice_Day_1}
\nu(r):=\left\{ \aligned
&\tfrac{1}{3}\quad\text{\rm if}\ken r=2\\
&\tfrac{1}{2}\hskip0.35truecm\text{\rm if}\ken r=3\\
\endaligned
\right..
\end{equation}
\end{proposition}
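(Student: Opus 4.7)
The plan is to follow the strategy of Theorem~\ref{TimeDiscreteErr1}, replacing the continuous propagators by their fully-discrete counterparts and reducing the stochastic error to a sum of mode-wise deterministic errors that are controlled by Theorem~\ref{Aygo_Kokora}.

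First, I would derive an integral representation of $\widehat U_h^m$ analogous to \eqref{Anaparastash1}. Under the assumption $\mu^2\,\Delta\tau<\tfrac14$, the coercivity \eqref{mu_coercivity} of $\widetilde\Lambda_{B,h}$ implies that $\Lambda_h:=(I+\Delta\tau\,\Lambda_{B,h})^{-1}$ is well-defined on $M_h^r$, so iterating \eqref{FullDE2} gives $\widehat U_h^m=\sum_{j=1}^m\Lambda_h^{m-j+1}P_h\!\int_{\Delta_j}\!\partial_x\widehat W(\tau,\cdot)\,d\tau$. Lemma~\ref{prasinolhmma} applied iteratively furnishes a symmetric Green kernel $G^h_{\Lambda,j}\in H^2(D\times D)$ for $\Lambda_h^{\,j} P_h$ that vanishes on $\partial(D\times D)$. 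An integration by parts in $y$, legitimate thanks to this boundary vanishing (even though $\widehat W$ itself has no boundary conditions), then produces
\begin{equation*}
\widehat U_h^m(x)=\int_0^{\tau_m}\!\!\int_D\widehat{\mathcal K}_{h,m}(\tau;x,y)\,\widehat W(\tau,y)\,dy\,d\tau,\qquad\widehat{\mathcal K}_{h,m}(\tau;x,y):=-\sum_{j=1}^{m}\mathcal X_{\Delta_j}(\tau)\,\partial_y G^h_{\Lambda,m-j+1}(x,y).
\end{equation*}

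Next, subtracting this representation from \eqref{Anaparastash1}, invoking Lemma~\ref{Lhmma1} to rewrite the $\widehat W$-integral as a $dW$-stochastic integral, and then combining It\^o's isometry \eqref{Ito_Isom} with the contractivity \eqref{L2L2_Bound} of $\widehat\Pi$, I would bound ${\mathbb E}[\|\widehat U_h^m-\widehat U^m\|_{0,D}^2]$ by the $L^2(D\times D\times(0,\tau_m))$-norm squared of the kernel difference. Expanding this Hilbert--Schmidt quantity in the $L^2(D)$-orthonormal cosine basis $\{\varphi_k\}_{k\ge 1}$ (and using $\varphi_k'=-\lambda_k\,\varepsilon_k$ to absorb the $\partial_y$) yields
\begin{equation*}
{\mathbb E}\!\left[\|\widehat U_h^m-\widehat U^m\|_{0,D}^2\right]\le\sum_{k=1}^{\infty}\lambda_k^2\sum_{\ell=1}^{m}\Delta\tau\,\|W_h^{\ell}[\varepsilon_k]-W^{\ell}[\varepsilon_k]\|_{0,D}^2,
\end{equation*}
where $W^\ell[\varepsilon_k]$ and $W_h^\ell[\varepsilon_k]$ denote, respectively, the Backward Euler time-discrete and fully-discrete approximations of the deterministic problem \eqref{Det_Parab} with initial datum $\varepsilon_k$. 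The inner sum is exactly the quantity bounded in Theorem~\ref{Aygo_Kokora}, giving an upper bound of $C\,h^{2\ell_\star(r)\theta}\lambda_k^{2((r+1)\theta-2)}$ for every $\theta\in[0,1]$.

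Summing in $k$ produces ${\mathbb E}[\|\widehat U_h^m-\widehat U^m\|_{0,D}^2]\le C\,h^{2\ell_\star(r)\theta}\sum_{k=1}^{\infty}\lambda_k^{\,2(r+1)\theta-2}$, and \eqref{SR_BOUND} bounds the series by a factor of order $\delta^{-1}$ whenever $\theta=\tfrac{1}{2(r+1)}-\delta$ with $\delta\in(0,\tfrac{1}{2(r+1)}]$. Since $\tfrac{\ell_\star(r)}{2(r+1)}=\nu(r)$ for both $r=2$ and $r=3$, the choice $\delta=\tfrac{\epsilon}{\ell_\star(r)}$ delivers the announced rate $h^{\nu(r)-\epsilon}$ with constant of order $\epsilon^{-1/2}$, which is \eqref{Lasso1}. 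The main technical obstacle is the first step --- namely, verifying that iterating $\Lambda_h P_h$ preserves both the symmetry and the boundary vanishing of the Green kernel furnished by Lemma~\ref{prasinolhmma}, so that the $y$-integration by parts transferring $\partial_x$ from $\widehat W$ onto the kernel is justified. Once this representation is secured, the stochastic-to-deterministic reduction closely parallels the argument used in Theorem~\ref{TimeDiscreteErr1}.
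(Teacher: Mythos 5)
Your proposal is correct and follows essentially the same route as the paper: the paper likewise represents $\widehat U_h^m$ via the kernel $-\partial_y G_{\ssy\Lambda_h}$ built from Lemma~\ref{prasinolhmma} (its operator $\Phi_h$, with $\Lambda_h f'=\Phi_h f$ encoding exactly the integration by parts you flag as the technical point), reduces the Hilbert--Schmidt norm of the kernel difference to the mode-wise deterministic errors $\|\Lambda^{\ell}\varepsilon_k-\Lambda_h^{\ell}\varepsilon_k\|_{\ssy 0,D}$ controlled by Theorem~\ref{Aygo_Kokora}, and closes with \eqref{SR_BOUND} under the same choice of $\theta$. Your parametrization $\theta=\tfrac{1}{2(r+1)}-\delta$ is just a relabeling of the paper's $\epsilon=\nu(r)-\ell_{\star}(r)\,\theta$.
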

%
%
%
%
%
%
%
%
%
\begin{proof}
Let $I:L^2(D)\to L^2(D)$ be the identity operator and
$\Lambda_h:L^2(D)\to M^r_h$ be the inverse discrete elliptic
operator given by $\Lambda_h:=(I+\Delta\tau\,\Lambda_{\ssy
B,h})^{-1}P_h$, having a Green function $G_{\ssy
\Lambda_h}=A_{h,\Delta\tau}$ according to Lemma~\ref{prasinolhmma}
and taking into account that $\mu^2\,\Delta\tau<4$.
Also, we define an operator $\Phi_h:L^2(D)\rightarrow M_h^r$ by
$(\Phi_h f)(x):=\int_{\ssy D}G_{\ssy\Phi_h}(x,y)\,f(y)\;dy$ for
$f\in L^2(D)$ and $x\in{\overline D}$, where $G_{\ssy
\Phi_h}(x,y)=-\partial_yG_{\ssy\Lambda_h}(x,y)$. Then, we have
that $\Lambda_hf'=\Phi_hf$ for all $f\in H^1(D)$.
Also, for $\ell\in\nset$, we denote by
$G_{\ssy\Lambda_h,\Phi_h,\ell}$ the Green function of
$\Lambda_h^{\ell}\Phi_h$.  In the sequel,  we will use the symbol $C$ to denote a generic
constant that is independent of $\Delta{t}$, $\Delta{x}$, $h$ and $\Delta\tau$,
and may changes value from one line to the other. 
\par
Applying, an induction argument, from \eqref{FullDE2} we
conclude that
$${\widehat U}_h^m=\sum_{j=1}^{\ssy m} \int_{\ssy\Delta_j}
\Lambda_h^{m-j}\Phi_h{\widehat W}(\tau,\cdot)\,d\tau,\quad
m=1,\dots,M,$$
which is written, equivalently, as follows:
\begin{equation}\label{Anaparastash2}
{\widehat U}_h^m(x)=\int_0^{\tau_m}\!\!\!\int_{\ssy D}
\,{\widehat{\mathcal D}}_{h,m}(\tau;x,y)\,{\widehat W}(\tau,y)
\,dyd\tau\quad\forall\,x\in{\overline D}, \ken m=1,\dots,M,
\end{equation}
where
${\widehat{\mathcal D}}_{h,m}(\tau;x,y) :=\sum_{j=1}^m{\mathcal
X}_{\ssy\Delta_j}(\tau)
\,G_{\ssy\Lambda_h,\Phi_h,m-j}(x,y)\quad\forall
\,\tau\in[0,T],\ken\forall\,x,y\in D$.
Using \eqref{Anaparastash1}, \eqref{Anaparastash2}, the
It{\^o}-isometry property of the stochastic integral,
\eqref{HSxar} and the Cauchy-Schwarz inequality, we get
\begin{equation*}
\begin{split}
{\mathbb E}\left[
\|{\widehat U}^m-{\widehat U}_h^m\|_{\ssy 0,D}^2
\right]
&\leq\int_0^{\tau_m}\Big(\int_{\ssy D}\!\int_{\ssy D}
\,\big[{\widehat{\mathcal K}}_m(\tau;x,y)
-{\widehat{\mathcal D}}_{h,m}(\tau;x,y)\big]^2
\,dydx\Big)\,d\tau\\
&\leq\, \sum_{j=1}^m\int_{\ssy\Delta_{j}}\,\|\Lambda^{m-j}\Phi
-\Lambda_h^{m-j}\Phi_h\|_{\ssy\rm HS}^2\,d\tau,
\quad m=1,\dots,M,\\
\end{split}
\end{equation*}
where $\Lambda$ and $\Phi$ are the operators defined in the proof
of Theorem~\ref{TimeDiscreteErr1}. Now, we use the definition of
the Hilbert-Schmidt norm and the deterministic error estimate
\eqref{tiger_river1}, to obtain
\begin{equation*}
\begin{split}
%
%
{\mathbb E}\left[ \|{\widehat U}^m-{\widehat U}_h^m\|_{\ssy 0,D}^2
\right] &\leq\,\sum_{j=1}^m\,\Delta\tau
\left[\,\sum_{k=1}^{\infty} \|\Lambda^{m-j}\Phi\varphi_k
-\Lambda^{m-j}_h\Phi_h\varphi_k\|^2_{\ssy 0,D}
\,\right]\\
%
%
&\leq\,\sum_{k=1}^{\infty}\left[\, \sum_{\ell=1}^m\,\Delta\tau\,
\|\Lambda^{\ell}\varphi_k' -\Lambda_h^{\ell}\varphi_k'\|^2_{\ssy
0,D}
\,\right]\\
%
%
&\leq\,\sum_{k=1}^{\infty}\lambda_k^2\,\left[\,
\sum_{\ell=1}^m\,\Delta\tau\, \|\Lambda^{\ell}\varepsilon_k
-\Lambda_h^{\ell}\varepsilon_k\|^2_{\ssy 0,D}
\,\right]\\
%
&\leq\,C\,h^{2\,\ell_{\star}(r)\,\theta}
\,\sum_{k=1}^{\infty}\lambda_k^2\,\|\varepsilon_k\|^2_{\ssy
{\bfdot H}^{\xi_{\star}(r,\theta)}},\quad m=1,\dots,M,
\quad\forall\,\theta\in[0,1].\\
\end{split}
\end{equation*}
Thus, we arrive at
\begin{equation}\label{Easter2012a}
\max_{1\leq{m}\leq{\ssy M}}\left(\,{\mathbb E}\left[ \|{\widehat
U}^m-{\widehat U}_h^m\|_{\ssy 0,D}^2
\right]\,\right)^{\frac{1}{2}}
\leq\,C\,h^{\ell_{\star}(r)\,\theta}\,\left(\,\sum_{k=1}^{\infty}
\lambda_k^{-\left[1+\frac{2\,(r+1)}{\ell_{\star}(r)}\left(\nu(r)
-\ell_{\star}(r)\,\theta\right)\right]}\,\right)^{\frac{1}{2}},
\quad\forall\,\theta\in[0,1].
\end{equation}
It is easily seen that the series in the right hand side of \eqref{Easter2012a}
convergences iff $\nu(r)>\ell_{\star}(r)\,\theta$. Thus, setting
$\epsilon=\nu(r)-\ell_{\star}(r)\,\theta$, requiring $\epsilon\in(0,\nu(r)]$,
and combining \eqref{Easter2012a} and \eqref{SR_BOUND}, we arrive at 
the estimate \eqref{Lasso1}.
\end{proof}
%
%
%
%
%
%
%
\par
The available error estimates allow us to conclude a discrete in
time $L^{\infty}_t(L^2_{\ssy P}(L^2_x))$ convergence of the
Backward Euler fully-discrete approximations of ${\widehat u}$.
%
%
%
%
\begin{theorem}\label{FFQEWR}
Let $r=2$ or $3$, $\nu(r)$ be defined by \eqref{Nice_Day_1},
${\widehat u}$ be the solution of problem \eqref{AC2}, and
$({\widehat U}_h^m)_{m=0}^{\ssy M}$ be the Backward Euler finite
element approximations of ${\widehat u}$ constructed by
\eqref{FullDE1}-\eqref{FullDE2}.
Then, there exists a nonnegative constant $C$, independent of $h$,
$\Delta\tau$, $\Delta{t}$ and $\Delta{x}$, such that:
if $\mu^2\,\Delta\tau\leq\frac{1}{4}$, then
\begin{equation*}
\max_{0\leq{m}\leq{\ssy M}}\left\{{\mathbb E}\left[ \|{\widehat
U}_h^m-{\widehat u}(\tau_m,\cdot)\|_{\ssy 0,D}^2\right]
\right\}^{\half} \leq\,C \,\Big[\,\omega_*(\Delta\tau,\epsilon_1)
\,\,\Delta\tau^{\frac{1}{8}-\epsilon_1}
+\epsilon_2^{-\frac{1}{2}}\,\,\,h^{\nu(r)-\epsilon_2}\,\Big]
\end{equation*}
forall $\epsilon_1\in(0,\tfrac{1}{8}]$ and $\epsilon_2\in(0,\nu(r)]$,
where $\omega_*(\Delta\tau,\epsilon_1):=\epsilon_1^{-\frac{1}{2}}
+(\Delta\tau)^{\epsilon_1}(1+(\Delta{\tau})^{\frac{7}{4}}
+(\Delta{\tau})^{\frac{3}{4}})^{\half}$.
\end{theorem}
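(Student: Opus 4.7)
The plan is to obtain Theorem~\ref{FFQEWR} by a direct triangle inequality that splits the total fully-discrete error into a time-discretization part and a space-discretization part, each of which has already been controlled by the two preceding results.

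More precisely, I would first write, for each $m\in\{0,1,\dots,M\}$,
\begin{equation*}
\widehat U_h^m-\widehat u(\tau_m,\cdot)
=\bigl[\,\widehat U_h^m-\widehat U^m\,\bigr]
+\bigl[\,\widehat U^m-\widehat u(\tau_m,\cdot)\,\bigr],
\end{equation*}
where $(\widehat U^m)_{m=0}^{\ssy M}$ are the Backward Euler time-discrete approximations defined in \eqref{BackE1}--\eqref{BackE2}. Taking $L^2(D)$ norms, squaring, integrating over $\Omega$ with respect to $P$, applying Minkowski's inequality in $L^2_{\ssy P}(L^2_x)$, and then taking the maximum over $m$, one obtains
\begin{equation*}
\max_{0\leq m\leq{\ssy M}}\bigl({\mathbb E}[\,\|\widehat U_h^m-\widehat u(\tau_m,\cdot)\|_{\ssy 0,D}^2\,]\bigr)^{\frac{1}{2}}
\leq
\max_{0\leq m\leq{\ssy M}}\bigl({\mathbb E}[\,\|\widehat U_h^m-\widehat U^m\|_{\ssy 0,D}^2\,]\bigr)^{\frac{1}{2}}
+\max_{1\leq m\leq{\ssy M}}\bigl({\mathbb E}[\,\|\widehat U^m-\widehat u(\tau_m,\cdot)\|_{\ssy 0,D}^2\,]\bigr)^{\frac{1}{2}},
\end{equation*}
where I used that both $\widehat U_h^0$ and $\widehat U^0$ vanish and $\widehat u(0,\cdot)=0$, so the $m=0$ contribution is zero in the time-error maximum.

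Next, under the standing assumption $\mu^2\,\Delta\tau\leq\tfrac{1}{4}$ (which, a fortiori, implies the hypothesis $\Delta\tau\,\mu^2<\tfrac{1}{4}$ of Theorem~\ref{TimeDiscreteErr1} in the case $\kappa\geq 2$, and is compatible with $\kappa=1$), I would invoke Theorem~\ref{TimeDiscreteErr1} to bound the second maximum by $C\,\omega_1(\Delta\tau,\epsilon_1)\,\Delta\tau^{\frac{1}{8}-\epsilon_1}$ for any $\epsilon_1\in(0,\tfrac{1}{8}]$, and Proposition~\ref{Tigrakis} to bound the first maximum by $C\,\epsilon_2^{-\frac{1}{2}}\,h^{\nu(r)-\epsilon_2}$ for any $\epsilon_2\in(0,\nu(r)]$. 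Identifying $\omega_*(\Delta\tau,\epsilon_1)$ in the statement with $\omega_1(\Delta\tau,\epsilon_1)$ of Theorem~\ref{TimeDiscreteErr1}, the two bounds combine to give exactly the estimate claimed in Theorem~\ref{FFQEWR}.

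There is essentially no obstacle here: all of the real work, namely the construction of the discrete Duhamel representation \eqref{Anaparastash1}--\eqref{Anaparastash2} and its reduction to the deterministic semigroup error estimates \eqref{Ydaspis900} and \eqref{tiger_river1}, together with the verification that $\mu^2\,\Delta\tau\leq\tfrac{1}{4}$ yields well-posedness of both $Q_{\ssy B,\Delta\tau}$ and $\widetilde{\Lambda}_{\ssy B,h}$, has already been done. The only mild point to verify is that the constant hidden in the triangle inequality absorbs the two separate prefactors from Theorem~\ref{TimeDiscreteErr1} and Proposition~\ref{Tigrakis} into a single $C$ independent of $h$, $\Delta\tau$, $\Delta{t}$, $\Delta{x}$ (which it does, since both constants are independent of these parameters).
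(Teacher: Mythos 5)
Your proposal is correct and matches the paper's own argument exactly: the paper proves Theorem~\ref{FFQEWR} in one line as "a simple consequence of the error bounds \eqref{Lasso1} and \eqref{ElPasso}," i.e., precisely the triangle-inequality split into the time-discretization error of Theorem~\ref{TimeDiscreteErr1} and the space-discretization error of Proposition~\ref{Tigrakis} that you describe. The only cosmetic remark is that the theorem's hypothesis $\mu^2\,\Delta\tau\leq\tfrac{1}{4}$ does not literally imply the strict inequality $\mu^2\,\Delta\tau<\tfrac{1}{4}$ assumed in Theorem~\ref{TimeDiscreteErr1}, but this boundary-case mismatch is present in the paper itself and is immaterial.
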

%
%
%
%
%
%
%
%
%
%
%
\begin{proof}
The estimate is a simple consequence of the error bounds
\eqref{Lasso1} and \eqref{ElPasso}.
\end{proof}
%
%
%
\section*{Acknowledgments}
Work partially supported by the European Union's Seventh
Framework Programme
(FP7-REGPOT-2009-1) under grant agreement no. 245749
‘Archimedes Center for Modeling, Analysis and Computation’
(University of Crete, Greece).
%
%
%
%
\def\cprime{$'$}
\def\cprime{$'$}
%

%
%
%
\appendix
\section{ }\label{APP_1}
\par\noindent
Let $t>0$ and $\mu_k:=\lambda_k^2(\lambda_k^2-\mu)$ for
$k\in{\mathbb N}$. First, we recal that
${\mathcal S}(t)w_0=\sum_{k=1}^{\infty}e^{-\mu_k\,t}\,(w_0,\varepsilon_k)_{\ssy 0,D}\,\varepsilon_k$
for $t\ge 0$, and set ${\widetilde{\mathcal S}}(t)w_0=e^{-\mu^2\,t}\,{\mathcal S}(t)w_0$ for $t\ge 0$.
Next, follow Chapter~3 in \cite{Thomee}, to obtain
\begin{equation*}
\begin{split}
\big\|\partial_t^{\ell}{\widetilde{\mathcal S}}(t)w_0\big\|^2_{\ssy{\bfdot H}^p}
&=\sum_{k=1}^{\infty}\lambda_k^{2p}\,
\big(\partial_t^{\ell}{\widetilde{\mathcal S}}(t)w_0,
\varepsilon_k\big)^2_{\ssy 0,D}\\
&=\sum_{k=1}^{\infty}\lambda_k^{2p}\,(\mu_k+\mu^2)^{2\ell}\,
\big({\widetilde{\mathcal S}}(t)w_0,\varepsilon_k\big)^2_{\ssy 0,D}\\
&=\sum_{k=1}^{\infty}\lambda_k^{2p}\,(\mu_k+\mu^2)^{2\ell}\,
e^{-2\,(\mu_k+\mu^2)\,t}
\,\big(w_0,\varepsilon_k\big)^2_{\ssy 0,D},\\
\end{split}
\end{equation*}
which yields
\begin{equation}\label{Reggo5}
\big\|\partial_t^{\ell}{\widetilde{\mathcal S}}(t)w_0\big\|^2_{\ssy{\bfdot
H}^p}\leq\,{\widetilde C}_{\mu,\ell}\,\sum_{k=1}^{\infty}
\,\lambda_k^{2(p+4\ell)}\,
e^{-\lambda^4_k\,t}\,(w_0,\varepsilon_k)^2_{\ssy 0,D},
\end{equation}
where ${\widetilde C}_{\mu,\ell}:=\left(1+\frac{\mu}{\pi^2}
+\frac{\mu^2}{\pi^4}\right)^{2\ell}$.
Now, use \eqref{Reggo5}, to have
\begin{equation*}
\begin{split}
\int_{t_a}^{t_b}(\tau-t_a)^{\beta}\,\big\|\partial_t^{\ell}
{\widetilde{\mathcal S}}(\tau)w_0\big\|^2_{\ssy {\bfdot H}^p}\,d\tau
\leq&\,{\widetilde C}_{\mu,\ell}\,\sum_{k=1}^{\infty}\lambda_k^{2(p+4\ell-2\beta)}\,
\Big(\int_{t_a}^{t_b}[\lambda_k^4(\tau-t_a)\big]^{\beta}
\,e^{-\lambda_k^4\,\tau}\,d\tau\Big)
\,(w_0,\varepsilon_k)^2_{\ssy 0,D}\\
\leq&\,{\widetilde C}_{\mu,\ell}\,\sum_{k=1}^{\infty}\lambda_k^{2(p+4\ell-2\beta-2)}\,
\Big(\int_0^{\lambda_k^4\,(t_b-t_a)}
\rho^{\beta}\,e^{-(\rho+\lambda_k^4t_a)}\,d\rho\Big)
\,(w_0,\varepsilon_k)^2_{\ssy 0,D}\\
\leq&\,{\widetilde C}_{\mu,\ell}\,\Big(\int_0^{\infty}\rho^{\beta}
\,e^{-\rho}\,d\rho\Big)\,
\sum_{k=1}^{\infty}\lambda_k^{2(p+4\ell-2\beta-2)}\,
\,(w_0,\varepsilon_k)^2_{\ssy 0,D},\\
\end{split}
\end{equation*}
which yields
\begin{equation}\label{Reggo6}
\int_{t_a}^{t_b}(\tau-t_a)^{\beta}\,\big\|\partial_t^{\ell}
{\widetilde{\mathcal S}}(\tau)w_0\big\|^2_{\ssy {\bfdot H}^p}\,d\tau
\leq\,{\widetilde C}_{\beta,\ell,\mu}\,
\, \|w_0\|^2_{\ssy {\bfdot
H}^{p+4\ell-2\beta-2}},
\end{equation}
where ${\widetilde C}_{\beta,\ell,\mu}={\widetilde C}_{\mu,\ell}
\,\int_0^{\infty}x^{\beta}\,e^{-x}\,dx$. Observing that
$\partial_t^{\ell}{\mathcal S}(t)w_0=e^{\mu^2\,t}
\,\sum_{m=0}^{\ell}\binom{\ell}{m}\,\,\mu^{2(\ell-m)}
\,\,\partial_t^{m}{\widetilde{\mathcal S}}(t)w_0$,
and using \eqref{Reggo6}, we conclude that
\begin{equation*}
\int_{t_a}^{t_b}(\tau-t_a)^{\beta}\,\big\|\partial_t^{\ell}
{\mathcal S}(\tau)w_0\big\|^2_{\ssy {\bfdot H}^p}\,d\tau
\leq\,e^{2\,\mu^2\,T}\,\,C_{\beta,\ell,\mu}\,\,
\sum_{m=0}^{\ell}\|w_0\|^2_{\ssy{\bfdot H}^{p+4m-2\beta-2}}
\end{equation*}
which yields \eqref{Reggo3} with ${\mathcal C}_{\beta,\ell,\mu,\mu T}
=C_{\beta,\ell,\mu}\,e^{2\,\mu^2\,T}\,\ell$. $\mybox$
%
%
\end{document}